\newcommand{\SC}{\operatorname{SC}}
\newcommand{\Sort}{\operatorname{Sort}}
\newcommand{\Av}{\operatorname{Av}}
\newcommand{\rev}{\operatorname{rev}}
\newcommand{\comp}{\operatorname{comp}}
\newcommand{\swd}{\operatorname{swd}}
\newcommand{\sd}{\operatorname{sd}}
\newtheorem{theorem}{Theorem}[section]
\newtheorem{proposition}[theorem]{Proposition}
\newtheorem{corollary}[theorem]{Corollary}
\newtheorem{conjecture}[theorem]{Conjecture}
\newtheorem{lemma}[theorem]{Lemma}
\theoremstyle{definition}
\newtheorem{remark}[theorem]{Remark}
\newtheorem{example}[theorem]{Example}
\begin{document}

\title[]{Stack-Sorting with Consecutive-Pattern-Avoiding Stacks}
\subjclass[2010]{}

\author[]{Colin Defant}
\address[]{Fine Hall, 304 Washington Rd., Princeton, NJ 08544}
\email{cdefant@princeton.edu}
\author[]{Kai Zheng}
\address[]{Fine Hall, 304 Washington Rd., Princeton, NJ 08544}
\email{kzheng@princeton.edu}
\maketitle

\begin{abstract}
We introduce consecutive-pattern-avoiding stack-sorting maps $\SC_\sigma$, which are natural generalizations of West's stack-sorting map $s$ and natural analogues of the classical-pattern-avoiding stack-sorting maps $s_\sigma$ recently introduced by Cerbai, Claesson, and Ferrari. We characterize the patterns $\sigma$ such that $\Sort(\SC_\sigma)$, the set of permutations that are sortable via the map $s\circ\SC_\sigma$, is a permutation class, and we enumerate the sets $\Sort(\SC_{\sigma})$ for $\sigma\in\{123,132,321\}$. We also study the maps $\SC_\sigma$ from a dynamical point of view, characterizing the periodic points of $\SC_\sigma$ for all $\sigma\in S_3$ and computing $\max_{\pi\in S_n}|\SC_\sigma^{-1}(\pi)|$ for all $\sigma\in\{132,213,231,312\}$. In addition, we characterize the periodic points of the classical-pattern-avoiding stack-sorting map $s_{132}$, and we show that the maximum number of iterations of $s_{132}$ needed to send a permutation in $S_n$ to a periodic point is $n-1$. The paper ends with numerous open problems and conjectures. 
\end{abstract}

\section{Introduction}
\subsection{Background}

Let $S_n$ denote the set of permutations of the set $[n]:=\{1,\ldots,n\}$, which we write as words in one-line notation. The investigation of pattern avoidance in permutations began in 1968, when Knuth \cite{Knuth} introduced a stack-sorting machine and showed that a permutation can be sorted to the identity permutation using this machine if and only if it avoids the pattern $231$ (we define pattern containment and avoidance formally below). In his 1990 Ph.D. dissertation, West \cite{West} introduced a deterministic variant of Knuth's machine; this variant is a function $s:S_n\to S_n$. As in Knuth's case, a permutation $\pi\in S_n$ satisfies $s(\pi)=123\cdots n$ if and only if it avoids $231$. More recently, Albert, Homberger, Pantone, Shar, and Vatter \cite{AHPSV} introduced a vast generalization of Knuth's stack-sorting machine by considering $\mathcal C$-machines. Roughly speaking, a $\mathcal C$-machine is a sorting machine that uses a stack whose entries, when read from top to bottom, must have the same relative order as an element of the permutation class $\mathcal C$. Even more recently, Cerbai, Claesson, and Ferrari \cite{Cerbai} generalized West's stack-sorting map in a similar manner. For each permutation pattern $\sigma$, they defined a map $s_\sigma:S_n\to S_n$ that sends each permutation through a stack in a right-greedy manner, insisting that the contents of the stack must avoid the pattern $\sigma$ when read from top to bottom. In this more general setup, West's stack-sorting map is just $s_{21}$. Although the article \cite{Cerbai} is quite recent, it has already spawned several subsequent papers \cite{Baril, Berlow, Cerbai2, Cerbai3}. 

In this article, we introduce consecutive-pattern-avoiding stack-sorting maps $\SC_\sigma$. In order to define the maps $s_\sigma$ and $\SC_\sigma$ formally and simultaneously, it is helpful to define more general maps $T_{\mathcal A}$ dependent on sets $\mathcal A$ of permutations. Assume we are given an input permutation $\pi=\pi_1\cdots\pi_n$. Throughout this procedure, we consider the permutation obtained by reading the contents of a vertical stack from top to bottom. If moving the next entry in the input permutation to the top of the stack results in a stack whose contents have the same relative order as any element of $\mathcal A$, then the next entry in the input permutation is placed at the top of the stack (this operation is called a \emph{push}). Otherwise, the entry at the top of the stack is annexed to the end of the growing output permutation (this operation is called a \emph{pop}). This procedure stops when the output permutation has length $n$. We then define $T_{\mathcal A}(\pi)$ to be this output permutation. When $\mathcal A$ is the set of permutations that avoid $\sigma$, the map $T_{\mathcal A}$ is $s_\sigma$. Our focus in this article is on the case in which $\mathcal A$ is the set of permutations that avoid $\sigma$ as a consecutive pattern, in which case we write $T_{\mathcal A}=\SC_\sigma$. 

\begin{example}
The following diagram portrays the steps involved in sending the permutation $265413$ through a consecutive-$231$-avoiding stack, showing that $\SC_{231}(265413)=653142$. 
\begin{center}
\includegraphics[width=1\linewidth]{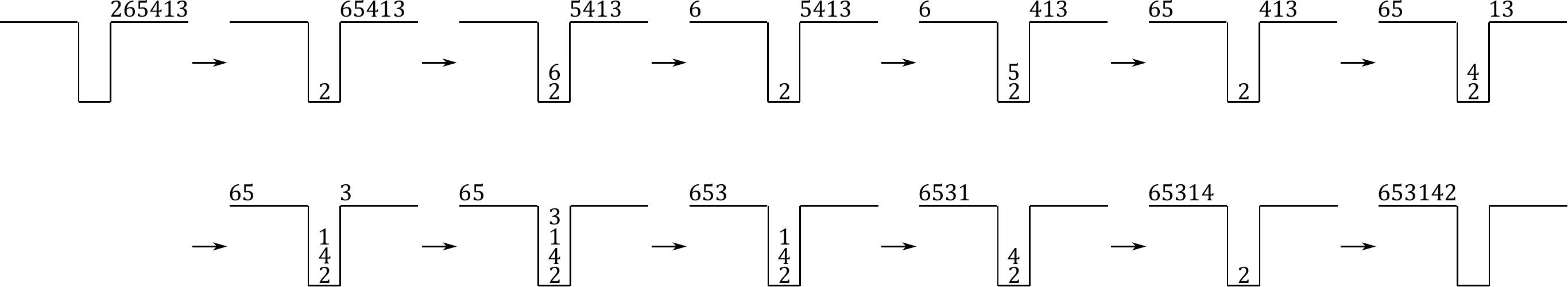}
\end{center}  
Notice that there is one step where the contents of the stack, when read from top to bottom, form the permutation $3142$. The permutation $3142$ contains the pattern $231$ classically (i.e., nonconsecutively), but avoids $231$ consecutively, which is why it is allowed to be in the stack. On the other hand, this would not be allowed if we were using a classical-$231$-avoiding stack. In that case, the steps would proceed as in the following diagram, which shows that $s_{231}(265413)=651432$. 

\begin{center}
\includegraphics[width=1\linewidth]{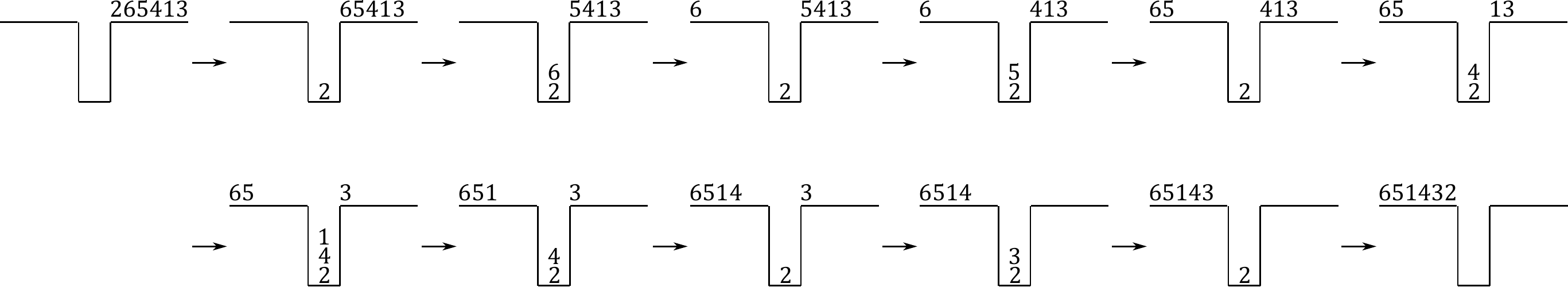}
\end{center}  

\end{example}

Our newly-defined maps $\SC_\sigma$ serve as perfectly legitimate generalizations of West's stack-sorting map because $\SC_{21}$ is the same as $s$, which is also the same as $s_{21}$ (a permutation avoids the pattern $21$ classically if and only if it avoids $21$ consecutively). However, when the length of $\sigma$ is at least $3$, the maps $\SC_\sigma$ and $s_\sigma$ exhibit different behaviors. 

The decision to consider stacks that avoid consecutive patterns, rather than stacks that avoid classical patterns, is very natural. Indeed, when one actually works through the process of sending a permutation through a classical-$21$-avoiding stack, one checks at each step whether the next entry in the input permutation is greater than the entry currently sitting at the top of the stack. One does not bother checking if the next entry in the input is greater than all of the entries in the stack because the smallest number in the stack is necessarily the top entry (this is another manifestation of the simple observation that a permutation avoids $21$ classically if and only if it avoids $21$ consecutively). If $\sigma$ has length $k\geq 3$, then when one sends a permutation through a classical-$\sigma$-avoiding stack, one must check all of the entries in the stack to see if the addition of the next entry produces an occurrence of the pattern $\sigma$. By contrast, when one sends a permutation through a consecutive-$\sigma$-avoiding stack, it is only necessary to compare the next entry in the input permutation with the top $k-1$ entries in the stack. In this way, the mechanics governing the maps $\SC_\sigma$ are in some sense ``closer'' to those governing West's stack-sorting map than those of the maps $s_\sigma$ are.  

Typically, researchers have approached West's stack-sorting map from a ``sorting'' point of view. The usual questions concern structural and enumerative aspects of $t$-stack-sortable permutations, which are permutations that get sorted to the identity using at most $t$ iterations of the map $s$ \cite{Bona, BonaSurvey,Bousquet98,Claessonn-4,DefantCounting,DefantPreimages,Goulden,West,Zeilberger}. On the other hand, there are now several articles that study the stack-sorting map from a ``dynamical'' point of view (see \cite{Bousquet, DefantCatalan, DefantEngenMiller, DefantFertility, DefantTroupes} and the references therein). In this approach, we view the stack-sorting map as an interesting combinatorially-defined function, without caring so much about trying to sort permutations. The typical questions concern the number of preimages of a permutation under $s$, which is called the \emph{fertility} of the permutation. The investigation of fertilities of permutations has revealed unexpected patterns and connections with several other aspects of combinatorics \cite{Bousquet, DefantCatalan, DefantFertility, DefantEngenMiller, Hanna, Maya}, 
including a very surprising and useful link with the combinatorics of cumulants in noncommutative probability theory \cite{DefantTroupes}. This dynamical approach concerning fertilities has also shed new light on the $t$-stack-sortable permutations studied from the sorting point of view \cite{DefantCounting,DefantPreimages}. 

In this paper, we consider the maps $\SC_\sigma$ from both the sorting point of view and the dynamical point of view. All of the sorting questions we consider are analogues of questions asked about the maps $s_\sigma$ in \cite{Cerbai,Cerbai2}. On the other hand, our dynamical approach is completely novel in the realm of pattern-avoiding stack-sorting maps (other than $s$ itself). We will also see that there are several dynamical questions that one can ask about the maps $\SC_\sigma$ that are trivial when $\sigma=21$ (i.e., when $\SC_\sigma=s$), but which become much more interesting when the length of $\sigma$ is at least $3$. 

\subsection{Outline and Summary of Main Results}

The articles \cite{Cerbai,Cerbai2} focus primarily on the set $\Sort(s_\sigma)$ of permutations that get sorted to the identity using a $\sigma$-avoiding stack followed by a classical-$21$-avoiding stack. Since the set of permutations that $s$ maps to the identity is the set $\Av(231)$ of $231$-avoiding permutations, we have $\Sort(s_\sigma)=s_\sigma^{-1}(\Av(231))$. Note that $\Sort(s_{21})=s^{-1}(\Av(231))$ consists of the $2$-stack-sortable permutations, which have received an enormous amount of attention since their inception in West's dissertation \cite{Bona, BonaSurvey,Bousquet98,DefantCounting,DefantPreimages,Goulden,West,Zeilberger}. Let us also remark that the articles \cite{Baril, Cerbai3} have studied other close variants of these sets; for example, the paper \cite{Baril} explores the sets $\Sort(s_{\sigma,\tau})=s_{\sigma,\tau}^{-1}(\Av(231))$, where $s_{\sigma,\tau}$ is the map that sends a permutation through a right-greedy stack whose contents avoid both $\sigma$ and $\tau$ (using our earlier notation, $s_{\sigma,\tau}=T_{\Av(\sigma,\tau)}$). In Section~\ref{Sec:CharacterizeClasses}, we define $\Sort(\SC_\sigma)=\SC_{\sigma}^{-1}(\Av(231))$, which can similarly be seen as the set of permutations that get sorted into the identity by a consecutive-$\sigma$-avoiding stack followed by a $21$-avoiding stack. We will prove that if $\sigma\in S_k$ for some $k\geq 3$, then $\Sort(\SC_\sigma)$ is a permutation class if and only if $\sigma$ and $\widehat\sigma$ both contain the pattern $231$, where $\widehat\sigma$ is the permutation obtained by swapping the first $2$ entries in $\sigma$; this is a direct analogue of one of the main results from \cite{Cerbai}. 

The other main results from \cite{Cerbai} concern the enumeration of the sets $\Sort(s_{123})$ and $\Sort(s_{321})$. Extending this work, the article \cite{Cerbai2} focuses on the enumeration of the set $\Sort(s_{132})$. In Sections~\ref{Sec:132and312} and \ref{Sec:123and321}, we give analogues of these results by characterizing and enumerating the sets $\Sort(\SC_{123})$, $\Sort(\SC_{321})$, and $\Sort(\SC_{132})$. More precisely, we will see that $\Sort(\SC_{123})$ is enumerated by the first differences of Motzkin numbers, that $\Sort(\SC_{321})$ is enumerated by the Motzkin numbers, and that $\Sort(\SC_{132})$ is enumerated by the Catalan numbers. 

In Section~\ref{Sec:PeriodicPoints}, we adopt the dynamical point of view and consider the periodic points of the maps $\SC_{\sigma}$, which we view as functions from $S_n$ to $S_n$ for each $n\geq 1$. It is easy to show that the only periodic point of $s:S_n\to S_n$ is the identity permutation $123\cdots n$. By contrast, if the length of $\sigma$ is at least $3$, then $\SC_\sigma$ has multiple periodic points. We will prove that if $\sigma\in S_3$, then the periodic points of $\SC_\sigma:S_n\to S_n$ are precisely the permuatations in $S_n$ that avoid $\sigma$ and its reverse as consecutive patterns.

It is known that the identity permutation $123\cdots n$, which has $C_n:=\frac{1}{n+1}{2n\choose n}$ preimages under $s$, has strictly more preimages under $s$ than every other permutation in $S_n$ (see \cite[Chapter 8, Exercise 23]{Bona}). The analogous results for the maps $\SC_\sigma$ are more difficult. In Section~\ref{Sec:132and312}, we study preimages under the maps $\SC_{132}$ and $\SC_{312}$, proving in particular that \[\max_{\pi\in S_n}|\SC_{132}^{-1}(\pi)|=\max_{\pi\in S_n}|\SC_{312}^{-1}(\pi)|={n-1\choose\left\lfloor\frac{n-1}{2}\right\rfloor}\] (along the way, we obtain a simple formula for $|\SC_{132}^{-1}(\pi)|$ for all reverse-layered permutations $\pi$). In Section~\ref{Sec:213and231}, we prove a similar result for the maps $\SC_{231}$ and $\SC_{213}$, showing that \[\max_{\pi\in S_n}|\SC_{231}^{-1}(\pi)|=\max_{\pi\in S_n}|\SC_{213}^{-1}(\pi)|=2^{n-2}\] when $n\geq 2$.

Section~\ref{Sec:Dynamicsofs132} is a little different from the rest of the paper because it focuses on classical-pattern-avoiding stack-sorting maps instead of consecutive-pattern-avoiding stack-sorting maps. The purpose of this section is to prove dynamical results about the maps $s_{132}$ and $s_{312}$. Such questions have not previously been considered for these maps, and we believe our results open the gate for interesting further developments. Specifically, we prove that the periodic points of $s_{132}$ (respectively, $s_{312}$) are precisely the permutations that avoid $132$ and $231$ (respectively, $312$ and $213$). We also prove the the maximum number of iterations needed for $s_{132}$ (respectively, $s_{312}$) to send a permutation to one of its periodic points is $n-1$ (these results are much more difficult than the analogous results for West's stack-sorting map). 

Finally, Section~\ref{Sec:Conclusion} collects a large number of open problems, conjectures, and other suggestions for future work. 

\subsection{Terminology and Notation}

Recall that in this article, a \emph{permutation} is an ordering of the elements of the set $[n]$ for some $n$. If $\pi$ is a sequence of $n$ distinct integers, then the \emph{standardization} of $\pi$ is the permutation in $S_n$ obtained by replacing the $i$th-smallest entry in $\pi$ with $i$ for all $i$. For example, the standardization of $4829$ is $2314$. We say two sequences have the \emph{same relative order} if their standardizations are equal. We say a permutation $\sigma$ \emph{contains} a permutation $\tau$ as a pattern if there is a (not necessarily consecutive) subsequence of $\sigma$ that has the same relative order as $\tau$; otherwise, $\sigma$ \emph{avoids} $\tau$. We say $\sigma$ \emph{contains} $\tau$ \emph{consecutively} if $\sigma$ has a consecutive subsequence with the same relative order as $\tau$; otherwise, $\sigma$ \emph{avoids} the consecutive pattern $\tau$. When we wish to stress that pattern containment (or avoidance) is nonconsecutive, we will sometimes call it \emph{classical} pattern containment (or avoidance). We refer to the standard references \cite{Bona, Elizalde, Kitaev, Linton} for more information about classical and consecutive permutation patterns. 

We denote consecutive patterns by underlining them (or referring to them as consecutive patterns). If $\tau^{(1)},\tau^{(2)},\ldots$ is a (finite or infinite) list of classical or consecutive patterns, then we write $\Av_n(\tau^{(1)},\tau^{(2)},\ldots)$ for the set of permutations in $S_n$ that avoid $\tau^{(1)},\tau^{(2)},\ldots$. We also write $\Av(\tau^{(1)},\tau^{(2)},\ldots)=\bigcup_{n\geq 0}\Av_n(\tau^{(1)},\tau^{(2)},\ldots)$. For example, $\Av(231,321)$ is the set of permutations that avoid $231$ and $321$, while $\Av(\underline{321})$ is the set of permutations that avoid the consecutive pattern $321$. If $\tau$ is an arbitrary unspecified permutation, then we let $\Av(\underline{\tau})$ be the set of permutations that consecutively avoid $\tau$. 

Given a permutation $\sigma=\sigma_1\cdots\sigma_k$ of length $k\geq 2$, we let $\widehat\sigma=\sigma_2\sigma_1\sigma_3\cdots\sigma_k$ denote the permutation obtained by swapping the first two entries in $\sigma$. We also let $\rev(\sigma)=\sigma_k\cdots\sigma_1$ denote the reverse of $\sigma$. If $\sigma\in S_k$, then we write $\comp(\sigma)=(k+1-\sigma_1)\cdots(k+1-\sigma_k)$ for the complement of $\sigma$. A \emph{descent} (respectively, \emph{ascent}) of $\sigma$ is an index $i$ such that $\sigma_i>\sigma_{i+1}$ (respectively, $\sigma_i<\sigma_{i+1}$). An \emph{ascending run} of $\sigma$ is a maximal consecutive increasing subsequence of $\pi$, while a \emph{descending run} is a maximal consecutive decreasing subsequence. For example, the ascending runs of $7862351$ are $78$, $6$, $235$, and $1$, while the descending runs of $7862351$ are $7$, $862$, $3$, and $51$. We write $|\tau|$ for the length of a sequence $\tau$ of positive integers.  

\subsection{A Preliminary Lemma}
Before we proceed, it will be helpful to record the following lemma, which will allow us to simplify many of our considerations. The proof of this lemma is immediate from the definitions of $s_\sigma$ and $\SC_\sigma$. 

\begin{lemma}\label{LemComplement}
If $\sigma\in S_k$ for some $k\geq 2$, then \[s_{\comp(\sigma)}=\comp\circ s_\sigma\circ\comp\quad\text{and}\quad\SC_{\comp(\sigma)}=\comp\circ\SC_\sigma\circ\comp.\] 
\end{lemma}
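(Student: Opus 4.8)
The plan is to trace through the definition of the general map $T_{\mathcal A}$ and observe that complementation commutes with the entire push/pop procedure, provided we replace the governing set $\mathcal A$ by its complement $\comp(\mathcal A) := \{\comp(\alpha) : \alpha \in \mathcal A\}$. The key observation is that the decision made at each step of computing $T_{\mathcal A}(\pi)$ depends only on the \emph{relative order} (i.e., the standardization) of the entries currently in the stack together with the next entry of the input. Complementing the input permutation $\pi$ and simultaneously complementing the stack contents at every stage reverses all of these relative orders, so a configuration of stack-plus-next-entry has standardization in $\mathcal A$ if and only if its complement has standardization in $\comp(\mathcal A)$.

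The main steps I would carry out are as follows. First, I would set up the bookkeeping: run the $T_{\mathcal A}$ procedure on $\pi \in S_n$ and, in parallel, run the $T_{\comp(\mathcal A)}$ procedure on $\comp(\pi) \in S_n$. I would prove by induction on the number of elementary steps (pushes and pops) that the two procedures perform the same sequence of moves, and that at every stage the stack contents of the second run are exactly the complements (within $[n]$) of the stack contents of the first run, while the partial output of the second run is the complement of the partial output of the first run. The inductive step splits into checking a push and checking a pop: in both cases one uses that ``moving the next input entry to the top of the stack produces contents whose standardization lies in $\mathcal A$'' is equivalent, after complementing everything, to the corresponding statement with $\comp(\mathcal A)$, because standardization and complementation interact as $\operatorname{st}(\comp(w)) = \comp(\operatorname{st}(w))$ for any word $w$ of distinct integers. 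Once the procedures are shown to be step-for-step complementary, it follows that $T_{\comp(\mathcal A)}(\comp(\pi)) = \comp(T_{\mathcal A}(\pi))$, i.e., $T_{\comp(\mathcal A)} = \comp \circ T_{\mathcal A} \circ \comp$.

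Finally, I would specialize this general identity to the two cases of interest. For the classical case, if $\mathcal A = \Av(\sigma)$ then $\comp(\mathcal A) = \Av(\comp(\sigma))$, because a permutation avoids $\sigma$ if and only if its complement avoids $\comp(\sigma)$; hence $s_{\comp(\sigma)} = \comp \circ s_\sigma \circ \comp$. For the consecutive case, if $\mathcal A = \Av(\underline{\sigma})$ then likewise $\comp(\mathcal A) = \Av(\underline{\comp(\sigma)})$, since a permutation consecutively avoids $\sigma$ if and only if its complement consecutively avoids $\comp(\sigma)$ (complementation sends consecutive subsequences to consecutive subsequences and reverses their relative order); hence $\SC_{\comp(\sigma)} = \comp \circ \SC_\sigma \circ \comp$.

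The argument is essentially routine — the ``hard part'' is merely the notational care needed to state the parallel-run induction cleanly, tracking that complementation is taken within $[n]$ (so it is an involution on $S_n$ and on sub-multisets of $[n]$) and that it is compatible with standardization. There is no genuine mathematical obstacle; indeed, as the excerpt notes, the proof is immediate from the definitions, so I would keep it short and emphasize the single point that each elementary step of the procedure is governed only by a relative-order condition that is inverted by complementation.
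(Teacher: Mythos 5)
Your argument is correct and is exactly the "immediate from the definitions" reasoning the paper alludes to (the paper gives no written proof, only the remark that it is immediate): run the two procedures in parallel and observe that each push/pop decision depends only on a relative-order condition that complementation reverses, then specialize to $\mathcal A=\Av(\sigma)$ and $\mathcal A=\Av(\underline{\sigma})$. No discrepancies.
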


\section{When is $\Sort(\SC_\sigma)$ a Permutation Class?}\label{Sec:CharacterizeClasses}

A \emph{permutation class} is a set $\mathcal C$ of permutations that is closed under classical pattern containment. Said differently, this means that if $\sigma,\tau$ are permutations such that $\sigma$ contains $\tau$ classically and $\sigma\in\mathcal C$, then $\tau\in\mathcal C$. Equivalently, a set of permutations is a permutation class if and only if it is of the form $\Av(\tau^{(1)},\tau^{(2)},\ldots)$ for some classical patterns $\tau^{(1)},\tau^{(2)},\ldots$. 

Recall that $\widehat\sigma$ is the permutation obtained by swapping the first two entries in $\sigma$. Furthermore, $\Sort(s_\sigma)=s_\sigma^{-1}(\Av(231))$ is the set of permutations that get sorted into the identity by the map $s\circ s_\sigma$. The following theorem is one of the main results in \cite{Cerbai}. 

\begin{theorem}[\!\!\cite{Cerbai}]\label{ThmCerbaiClass}
Let $\sigma\in S_k$ for some $k\geq 2$. The set $\Sort(s_\sigma)$ is a permutation class if and only if $\sigma=12$ or $\widehat\sigma$ contains the pattern $231$ classically. 
\end{theorem}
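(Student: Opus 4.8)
The plan is to rephrase ``$\Sort(s_\sigma)$ is a permutation class'' as a closure statement and then prove the two implications by different means. Recall that a set of permutations is a class exactly when it is downward closed under the classical pattern order, and that it is enough to check closure under deleting a single entry (closure under all patterns then follows by composing deletions). So the question becomes how the condition $s_\sigma(\pi)\in\Av(231)$ behaves under deleting one entry of $\pi$, and the hypothesis on $\widehat\sigma$ should be precisely what controls this.

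First I would record the mechanics of $s_\sigma$. While $s_\sigma$ processes $\pi$, the contents of the stack always form a word that avoids $\sigma$ when read from top to bottom, and an incoming entry $x$ is popped from — rather than pushed onto — the current stack word $w$ exactly when $xw$ contains $\sigma$; since $w$ itself avoids $\sigma$, any such occurrence uses $x$ as $\sigma_1$, so it consists of $x$ together with an occurrence of $\sigma_2\cdots\sigma_k$ in $w$ respecting the value relations with $x$. The reason the governing pattern is $\widehat\sigma=\sigma_2\sigma_1\sigma_3\cdots\sigma_k$ rather than $\sigma$ is that a push/pop decision really concerns the triple (incoming entry, top of stack, rest of stack), and whether a threatened occurrence of $\sigma$ can be defused by interposing a suitably large entry — which swaps the roles of $x$ and the former top — is precisely a question about $231$-containment in $\widehat\sigma$. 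Turning this heuristic into a clean lemma is, I expect, the technical core of the argument.

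For the ``if'' direction I would first treat the case in which $\widehat\sigma$ contains $231$ by proving that $s_\sigma$ is \emph{pattern-compatible}: whenever $\pi'$ is a classical pattern of $\pi$, the permutation $s_\sigma(\pi')$ is a classical pattern of $s_\sigma(\pi)$. By composing deletions it suffices to prove this when $\pi'$ is obtained from $\pi$ by deleting one entry $v$, and here I would run $s_\sigma$ on $\pi$ and on $\pi'$ in parallel and show, using the mechanics above together with the hypothesis on $\widehat\sigma$, that deleting $v$ alters the run only by deleting the ``copy of $v$'' from every intermediate stack and output. Given pattern-compatibility, and since $\Av(231)$ is a class, $s_\sigma(\pi)\in\Av(231)$ forces $s_\sigma(\pi')\in\Av(231)$, so $\Sort(s_\sigma)$ is closed under deletion and is therefore a class. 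The case $\sigma=12$ is the one where this general argument needs care: there the stack must stay strictly decreasing from top to bottom, so the dynamics of $s_{12}$ are very simple to describe directly, and from that description one checks by hand that $\Sort(s_{12})$ is an avoidance class (it appears to be $\Av(213)$).

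For the ``only if'' direction, assume $\sigma\neq12$ and $\widehat\sigma$ avoids $231$; I must produce a permutation in $\Sort(s_\sigma)$ having a classical pattern that is not in $\Sort(s_\sigma)$, that is, witness a failure of pattern-compatibility that changes $231$-avoidance of the image. The prototype is West's map $s=s_{21}$, for which $\widehat\sigma=12$ avoids $231$, and indeed $35241\in\Sort(s)$ while its pattern $3241\notin\Sort(s)$: the inserted large entry $5$ ``shields'' an otherwise-blocked entry and prevents an over-pop that would create a $231$ in the image. I would generalize this by casework on the shape of $\sigma$ — first on whether $\sigma_1<\sigma_2$ or $\sigma_1>\sigma_2$, then on the finer structure of $\sigma_3\cdots\sigma_k$ allowed by the assumption that $\widehat\sigma$ avoids $231$ — exhibiting in each case an explicit permutation $\rho$ with $s_\sigma(\rho)\in\Av(231)$ together with a one-entry deletion $\rho'$ of $\rho$ with $s_\sigma(\rho')\notin\Av(231)$. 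Verifying that these families behave as claimed, and that the casework exhausts all relevant $\sigma$, is the main obstacle on this side and, I expect, the most delicate part of the proof.
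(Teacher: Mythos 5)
The paper does not prove Theorem~\ref{ThmCerbaiClass}: it is a cited result of Cerbai, Claesson, and Ferrari \cite{Cerbai}, so there is no in-paper proof to compare against. What the paper proves itself is the consecutive analogue, Theorem~\ref{ThmClass}, via Lemmas~\ref{Lem1} and~\ref{Lem2}, and the route there is not pattern-compatibility but an explicit characterization: when $\widehat\sigma$ contains $231$, one shows $\Sort(\SC_\sigma)=\Av\bigl(132,\underline{\rev(\sigma)}\bigr)$, which is manifestly a permutation class. The classical proof in \cite{Cerbai} is parallel: $\Sort(s_\sigma)=\Av(132,\rev(\sigma))$, obtained by observing that $s_\sigma(\pi)=\rev(\pi)$ whenever $\pi$ avoids $\rev(\sigma)$ classically, and that the first occurrence of $\rev(\sigma)$ in $\pi$ otherwise forces a $\widehat\sigma$-pattern (hence a $231$) in the output.

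Your ``if'' direction has a concrete gap: the run-matching lemma you propose --- that deleting an entry $v$ from $\pi$ merely deletes the copy of $v$ from every intermediate stack word and from the output --- is false even when $\widehat\sigma$ contains $231$. Take $\sigma=321$ (so $\widehat\sigma=231$), $\pi=123$, and $v=1$. Processing $\pi$: push $1$, push $2$, pop $2$ (since pushing $3$ would create the stack word $321$), push $3$, pop $3$, pop $1$; output $231$. Deleting $v$ from this run gives the output word $23$, which standardizes to $12$, whereas $s_{321}(12)=21$. The deleted entry $v=1$ is exactly the bottom of the threatened $321$-occurrence, so removing it converts a forced pop into a push, and the two runs diverge. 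The statement that survives, and is all the closure argument really needs, is the trivial observation that $s_\sigma$ restricted to $\Av(\rev(\sigma))$ equals $\rev$ (which is pattern-compatible); but then the content is precisely the containment $\Sort(s_\sigma)\subseteq\Av(\rev(\sigma))$, i.e., that any classical $\rev(\sigma)$-occurrence in the input produces a $231$ in the output, and that is the lemma you would need to supply --- it is where the hypothesis on $\widehat\sigma$ actually enters, as Lemma~\ref{Lem1} illustrates in the consecutive setting. Your ``only if'' sketch is headed in the right direction (the witness pair $35241$, $3241$ for $s_{21}$ is correct, and the casework parallels Lemma~\ref{Lem2}), but by your own admission the case analysis is the delicate part, and as written it cannot yet be judged complete.
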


Our goal in this section is to prove the following analogue of Theorem~\ref{ThmCerbaiClass} for the consecutive-pattern-avoiding stack-sorting maps $\SC_\sigma$. Recall that $\Sort(\SC_\sigma)=\SC_\sigma^{-1}(\Av(231))$ is the set of permutations that get sorted into the identity by the map $s\circ \SC_\sigma$. 

\begin{theorem}\label{ThmClass}
The set $\Sort(\SC_{12})$ is equal to the permutation class $\Av(213)$, while $\Sort(\SC_{21})$ is not a permutation class. If $\sigma\in S_k$ for some $k\geq 3$, then the set $\Sort(\SC_\sigma)$ is a permutation class if and only if $\sigma$ and $\widehat\sigma$ both contain the pattern $231$ classically. In this case, $\Sort(\SC_\sigma)=\Av(132)$. 
\end{theorem}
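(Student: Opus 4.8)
The plan is to handle the three assertions separately, starting with the two small cases and then the main equivalence for $k\geq 3$.

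For $\sigma=12$: I would first describe $\SC_{12}$ explicitly. Since a stack avoids the consecutive pattern $12$ precisely when its contents (top to bottom) are decreasing, pushing $\pi_{i+1}$ is allowed only when it is smaller than the current top entry; this is the ``opposite'' of West's map. A short analysis shows that $\SC_{12}(\pi)$ sends each left-to-right minimum to... more precisely, one checks directly that $\SC_{12}(\pi)=123\cdots n$ if and only if $\pi$ itself is such that the stack can be emptied in increasing order, which happens exactly when $\pi\in\Av(213)$. I would make this rigorous by noting that $\SC_{12}=\rev\circ\, s\circ\rev$ up to the appropriate symmetry, or simply by a direct induction on $n$ tracking the first entry; then $\Sort(\SC_{12})=\SC_{12}^{-1}(\Av(231))$ collapses to $\Av(213)$ because once the first stack outputs the identity the second ($21$-avoiding) stack fixes it. For $\sigma=21$: here $\SC_{21}=s$, so $\Sort(\SC_{21})$ is the set of $2$-stack-sortable permutations, and it is classical (West) that this set is not closed under pattern containment — e.g.\ $35241$ is $2$-stack-sortable but contains $2413$ which is not; I would just cite this.

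For the main case $\sigma\in S_k$, $k\geq 3$: the ``if'' direction is the substantive half. Assume $\sigma$ and $\widehat\sigma$ both contain $231$ classically. I would show $\Sort(\SC_\sigma)=\Av(132)$. The containment $\Av(132)\subseteq\Sort(\SC_\sigma)$: if $\pi$ avoids $132$, I claim $\SC_\sigma(\pi)\in\Av(231)$, so that $s(\SC_\sigma(\pi))$ is the identity. The key observation is that when the input avoids $132$, the stack never reaches a configuration whose standardization contains $\sigma$ consecutively — because a stack configuration forced by a $132$-avoiding input has a very restricted shape (roughly, the entries currently in the stack together with the next input entry, read appropriately, avoid $132$ too), and since $\sigma$ contains $231$, a consecutive occurrence of $\sigma$ in the stack would force a $132$ pattern somewhere, contradiction. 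Working this out carefully — identifying exactly what stack contents are reachable from a $132$-avoiding permutation and checking the pattern-avoidance bookkeeping — is what I expect to be the main obstacle. Conversely, for $\Sort(\SC_\sigma)\subseteq\Av(132)$: if $\pi$ contains $132$, I would exhibit that $\SC_\sigma$ essentially does nothing useful on the relevant subpattern (because the length-$3$ occurrence of $132$ never triggers a $\sigma$-avoidance violation, as $\sigma$ has length $\geq 3$ and the first push is always permitted), so the $132$ pattern survives into $\SC_\sigma(\pi)$ in a way that forces $\SC_\sigma(\pi)\notin\Av(231)$; one then checks $s$ cannot repair it. Combining the two inclusions gives $\Sort(\SC_\sigma)=\Av(132)$, which is visibly a permutation class.

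For the ``only if'' direction (when $k\geq 3$ and at least one of $\sigma,\widehat\sigma$ avoids $231$): I would produce an explicit witness to non-closure. If $\sigma$ avoids $231$, then since $\SC_\sigma$ behaves like West's map on $231$-avoiding-stack inputs on a certain family, I can find a permutation $\pi\in\Sort(\SC_\sigma)$ containing a pattern $\tau\notin\Sort(\SC_\sigma)$; the natural candidates are built by padding a small bad pattern (analogous to the $s_\sigma$ case in Theorem~\ref{ThmCerbaiClass}) with a prefix that fills the stack so that $\SC_\sigma$ acts trivially thereafter. The case where $\widehat\sigma$ avoids $231$ but $\sigma$ contains it is handled by the same idea applied after the first transposition-swap in the stack dynamics (the first two entries of the stack are exactly where $\sigma$ versus $\widehat\sigma$ matters, since swapping the first two input entries that enter an empty stack is the only freedom). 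Using Lemma~\ref{LemComplement} I can reduce the number of cases by complementation symmetry. The bookkeeping here mirrors the $s_\sigma$ argument of \cite{Cerbai}, so I expect it to be routine once the $132$-structure of reachable stack contents (established in the ``if'' direction) is in hand — that structural lemma is really the crux of the whole theorem.
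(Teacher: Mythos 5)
Your proposal has several concrete errors and a structural misallocation of the two hypotheses, so it would not go through as written.

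\textbf{The case $\sigma=12$.} The suggested identity $\SC_{12}=\rev\circ s\circ\rev$ is false; the correct relation is $\SC_{12}=\comp\circ s\circ\comp$ (Lemma~\ref{LemComplement}). For instance $\SC_{12}(132)=321$, but $\rev(s(\rev(132)))=\rev(s(231))=\rev(213)=312$. Moreover, your claimed characterization ``$\SC_{12}(\pi)=123\cdots n$ iff $\pi\in\Av(213)$'' aims at the wrong target: $\Sort(\SC_{12})=\SC_{12}^{-1}(\Av(231))$, not $\SC_{12}^{-1}(\{\mathrm{id}\})$. In fact $\SC_{12}(\pi)$ always ends in the entry $1$ (the entry $1$ is never ejected before all of $\pi$ has entered the stack), so $\SC_{12}(\pi)$ is never the identity for $n\geq 2$. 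The useful observation is that a permutation ending in $1$ avoids $231$ if and only if it is decreasing, which then transfers cleanly through $\comp\circ s\circ\comp$ to give $\Av(213)$.

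\textbf{The case $\sigma=21$.} Your witness fails: $s(2413)=2134\in\Av(231)$, so $2413$ \emph{is} $2$-stack-sortable. The paper instead observes that $35241$ contains $3241$ as a classical pattern, and $3241$ is not $2$-stack-sortable ($s(3241)=2314\notin\Av(231)$), while $35241$ is.

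\textbf{The case $k\geq 3$.} Your outline for the ``if'' direction tries to argue directly about reachable stack configurations of a $132$-avoiding input and you flag it as the main obstacle; the paper's route is shorter and you should look for it. The crux is the simple observation that if $\pi$ avoids $\rev(\sigma)$ \emph{consecutively}, the avoidance restriction never triggers, so $\SC_\sigma(\pi)=\rev(\pi)$. Since $\sigma$ contains $231$ iff $\rev(\sigma)$ contains $132$, every $\pi\in\Av(132)$ automatically avoids $\rev(\sigma)$ consecutively, and $\rev(\pi)\in\Av(231)$ follows immediately. This kills the containment $\Av(132)\subseteq\Sort(\SC_\sigma)$ in two lines with no stack bookkeeping at all. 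The deeper issue is that your outline never explains what $\widehat\sigma$ containing $231$ is for. The paper's Lemma~\ref{Lem1} shows that $\widehat\sigma\supseteq 231$ alone already forces $\Sort(\SC_\sigma)=\Av(132,\underline{\rev(\sigma)})$: if $\pi$ has a consecutive $\rev(\sigma)$, the first such occurrence produces a subsequence in $\SC_\sigma(\pi)$ with the same relative order as $\widehat\sigma$, hence a $231$. The hypothesis $\sigma\supseteq 231$ is then used only to collapse $\Av(132,\underline{\rev(\sigma)})$ to $\Av(132)$. Your proposed argument for $\Sort(\SC_\sigma)\subseteq\Av(132)$ (``the $132$ survives'') doesn't invoke $\widehat\sigma$ and so cannot be correct as stated, because if $\widehat\sigma\in\Av(231)$ then $\Sort(\SC_\sigma)$ need not even be contained in $\Av(132)$. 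Finally, the ``only if'' direction is a genuine five-plus-case argument in the paper (Lemma~\ref{Lem2} and the final paragraph of the theorem's proof), with distinct explicit witness pairs for $\sigma=231$, for $\sigma\supseteq 231$ with $\sigma\neq 231$, for $\sigma\in\Av(231)$ with $\sigma_1<\sigma_2$ and $\sigma_2>2$, for $\sigma_1=1,\sigma_2=2$, for $\sigma_1>\sigma_2$, and separately for $\sigma\in\Av(231)$ with $\widehat\sigma\notin\Av(231)$; gesturing at the Cerbai--Claesson--Ferrari technique is not a substitute for producing these witnesses, and the cases do not all reduce to one another by complementation.
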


We will prove Theorem~\ref{ThmClass} via the following two lemmas. Recall that $\rev(\sigma)$ denotes the reverse of the permutation $\sigma$. 

\begin{lemma}\label{Lem1}
Let $\sigma\in S_k$ for some $k\geq 3$. If $\widehat{\sigma}$ contains $231$ classically, then $\Sort(\SC_\sigma)$ is the set $\Av\left(132,\underline{\rev(\sigma)}\right)$ of permutations that avoid $132$ classically and avoid $\rev(\sigma)$ consecutively.
\end{lemma}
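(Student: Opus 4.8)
The goal is to show that when $\widehat\sigma$ contains $231$ classically, a permutation $\pi$ lies in $\Sort(\SC_\sigma) = \SC_\sigma^{-1}(\Av(231))$ if and only if $\pi$ avoids $132$ classically and avoids $\rev(\sigma)$ consecutively. I would set this up as two containments. For the forward direction, I would prove the contrapositive twice: first, if $\pi$ contains $132$ classically, then $\SC_\sigma(\pi)$ contains $231$; second, if $\pi$ contains $\rev(\sigma)$ consecutively, then $\SC_\sigma(\pi)$ contains $231$. For the reverse direction, I would show that if $\pi \in \Av(132, \underline{\rev(\sigma)})$, then $\SC_\sigma(\pi) \in \Av(231)$.

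The key structural observation to establish first is how $\SC_\sigma$ behaves when $\pi$ avoids $\rev(\sigma)$ consecutively. I claim that in this case the stack never actually gets to exercise its $\sigma$-avoidance constraint in a way that differs from the ordinary stack-sorting map on the relevant substructure — more precisely, I expect that the only obstruction to pushing is when the top $k-1$ entries of the stack together with the incoming entry would form $\sigma$ consecutively (reading top-to-bottom, so the stack contents are $\sigma_2\cdots\sigma_k$ read downward and the new entry plays the role matching... here I need to be careful about orientation). Since the stack reads top-to-bottom and entries are pushed on top, the newest entries are at the top; a fresh push of entry $x$ onto a stack with top entries $y_1$ (topmost), $y_2$, \ldots creates stack contents $x y_1 y_2 \cdots$ read top-to-bottom, and this is forbidden exactly when $x y_1 \cdots y_{k-1}$ has the same relative order as $\sigma$. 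I would translate the hypothesis ``$\pi$ avoids $\rev(\sigma)$ consecutively'' into a statement controlling when such configurations can arise, and use the hypothesis that $\widehat\sigma$ contains $231$ to pin down the first two entries of $\sigma$ (this is where $\widehat\sigma$ versus $\sigma$ matters: swapping the top two stack entries corresponds to the difference between a successful and a blocked push at a critical moment).

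For the heart of the argument, I would compare $\SC_\sigma$ with West's map $s = \SC_{21}$. The natural strategy: show that on $\Av(\underline{\rev(\sigma)})$, the map $\SC_\sigma$ agrees with $s$ up to the behavior that matters for the output avoiding $231$, OR directly characterize $\SC_\sigma^{-1}(\Av(231)) \cap \Av(\underline{\rev(\sigma)})$. Recall $s^{-1}(\Av(231))$ is the set of $2$-stack-sortable permutations, which is \emph{not} simply $\Av(132)$, so $\SC_\sigma$ cannot simply equal $s$; rather, the extra consecutive-avoidance constraint $\underline{\rev(\sigma)}$ must interact with the $132$-avoidance condition. I anticipate the cleanest route is: (i) for $\pi \in \Av(132)$, analyze $\SC_\sigma(\pi)$ directly and show it avoids $231$ provided $\pi$ also avoids $\underline{\rev(\sigma)}$, by tracking the stack contents and showing they are always decreasing-ish in the right way; (ii) for $\pi \notin \Av(132)$ but $\pi \in \Av(\underline{\rev(\sigma)})$, exhibit an explicit $231$ pattern in $\SC_\sigma(\pi)$ by locating the ``$1$'' of a $132$-occurrence and showing it gets output before two larger entries in descending order.

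\textbf{Main obstacle.} I expect the hardest part to be the forward implication ``$\pi$ contains $132$ classically $\Rightarrow$ $\SC_\sigma(\pi)$ contains $231$'' under only the hypothesis $\widehat\sigma \supseteq 231$. For West's map this is the classical (and nontrivial) fact underlying the theory of $2$-stack-sortable permutations, and here it is further complicated because $\SC_\sigma$ does not push as greedily as $s$ — a push that $s$ would perform may be blocked in $\SC_\sigma$, so the usual argument tracing the fate of a $132$-occurrence must be redone while verifying that the blocking configurations (which require $\sigma$ to appear consecutively in the stack) cannot save us. The hypothesis $\widehat\sigma \supseteq 231$ should be exactly what guarantees that whenever the stack would be forced to pop at the critical moment, the popped entry still creates the needed $231$ in the output. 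I would isolate this as the key lemma and prove it by a careful case analysis on the relative position of the incoming entry and the top of the stack at the moment the ``$1$'' of the $132$-pattern is pushed out.
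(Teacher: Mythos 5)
Your overall framing (two containments, with the forward direction split into two contrapositives) matches the paper, but your detailed plan heads in the wrong direction and would not produce a correct proof, for two reasons.

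First, you miss the one observation that collapses most of the work: if $\pi$ avoids $\rev(\sigma)$ consecutively, then \emph{no entry is popped until every entry has entered the stack}, so $\SC_\sigma(\pi)=\rev(\pi)$. The reason is that a pop is triggered only when the top $k$ entries of the stack (read top to bottom) would realize $\sigma$; before any pop occurs, those are the last $k$ entries of a prefix of $\pi$ read in reverse, so they realize $\sigma$ precisely when the corresponding $k$ consecutive entries of $\pi$ realize $\rev(\sigma)$. Once you have $\SC_\sigma(\pi)=\rev(\pi)$ on $\Av(\underline{\rev(\sigma)})$, both your step (i) and step (ii) are one-liners: $\rev(\pi)\in\Av(231)$ if and only if $\pi\in\Av(132)$. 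Your plan to instead ``compare $\SC_\sigma$ with $s$'' and show they ``agree up to the behavior that matters'' on $\Av(\underline{\rev(\sigma)})$ would not work: on that set $\SC_\sigma$ is the reversal map, which is nothing like $s$. Relatedly, the claim that ``$\SC_\sigma$ does not push as greedily as $s$'' is backwards --- $\SC_\sigma$ for $|\sigma|\geq 3$ only refuses a push when the top $k$ stack entries would realize $\sigma$, whereas $s=\SC_{21}$ refuses whenever the top two would realize $21$; so $\SC_\sigma$ pushes at least as greedily.

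Second, you identify the wrong case as the ``main obstacle'' and therefore never engage with the step where $\widehat\sigma\supseteq 231$ is actually needed. That hypothesis does nothing for your case $\pi\supseteq 132$, $\pi\in\Av(\underline{\rev(\sigma)})$, which is trivial once $\SC_\sigma=\rev$ there. It is required exclusively for the remaining case, $\pi$ containing $\rev(\sigma)$ consecutively. There the paper takes the first consecutive occurrence of $\rev(\sigma)$, at positions $i,\ldots,i+k-1$; by minimality no pop has occurred when $\pi_{i+k-1}$ is next in line, so the top $k-1$ stack entries are $\pi_{i+k-2},\ldots,\pi_i$, the push is blocked, $\pi_{i+k-2}$ pops, $\pi_{i+k-1}$ then enters, and the eventual output contains $\pi_{i+k-2}\,\pi_{i+k-1}\,\pi_{i+k-3}\cdots\pi_i$ in this order, which has the relative order of $\widehat\sigma$ and hence contains $231$. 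That argument (in particular, checking that $\pi_{i+k-1}$ enters right after the single pop) is where the real content lies; your outline never reaches it.
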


\begin{proof}
In general, it is immediate from the definition of $\SC_\sigma$ that $\SC_\sigma(\pi)=\rev(\pi)$ whenever $\pi$ avoids $\rev(\sigma)$ consecutively. Indeed, in this case, all of the entries in $\pi$ enter the stack and then proceed to exit the stack in the reverse of the order in which they entered. It follows that if $\pi$ avoids $\rev(\sigma)$ consecutively and avoids $132$ classically, then $\SC_\sigma(\pi)=\rev(\pi)\in\Av(231)$. Furthermore, if $\pi$ contains $132$ classically and avoids $\rev(\sigma)$ consecutively, then $\SC_\sigma(\pi)=\rev(\pi)$ contains $231$ classically. This implies that $\pi\not\in\Sort(\SC_\sigma)$.

Suppose now that $\pi$ contains $\rev(\sigma)$ consecutively, and let $i$ be the minimal index such that $\pi_{i}\cdots\pi_{i+k-1}$ is a consecutive occurrence of the pattern $\rev(\sigma)$. Consider sending $\pi$ through a consecutive-$\sigma$-avoiding stack. When $\pi_{i+k-1}$ is about to enter the stack, the top $k-1$ entries in the stack, read from top to bottom, are $\pi_{i+k-2},\ldots,\pi_i$. The entry $\pi_{i+k-2}$ pops out of the stack, and the entry $\pi_{i+k-1}$ enters the stack immediately afterward. Once all of the entries have finally exited the stack, the entries $\pi_{i+k-2},\pi_{i+k-1},\pi_{i+k-3},\ldots,\pi_i$ will appear in this order in the output permutation $\SC_\sigma(\pi)$. However, $\pi_{i+k-2}\pi_{i+k-1}\pi_{i+k-3}\cdots\pi_i$ has the same relative order as $\widehat\sigma$, which in turn contains $231$ classically by hypothesis. This shows that $\SC_\sigma(\pi)\not\in\Av(231)$, so $\pi\not\in\Sort(\SC_\sigma)$. 
\end{proof}

\begin{lemma}\label{Lem2}
If $\widehat\sigma\in\Av_k(231)$ for some $k\geq 3$, then $\Sort(\SC_\sigma)$ is not a permutation class. 
\end{lemma}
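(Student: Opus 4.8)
The plan is to produce, for each such $\sigma$, a pair of permutations $\pi$ and $\tau$ with $\tau$ occurring in $\pi$ as a classical pattern, $\pi\in\Sort(\SC_\sigma)$, and $\tau\notin\Sort(\SC_\sigma)$; this directly contradicts closure of $\Sort(\SC_\sigma)$ under classical pattern containment. Two permutations that are readily seen to lie in $\Sort(\SC_\sigma)$ will serve as starting points for building $\pi$. First, the identity $12\cdots n$ lies in $\Sort(\SC_\sigma)$ for every $n$: the hypothesis $\widehat\sigma\in\Av_k(231)$ forces $\sigma$ not to be the decreasing permutation $k(k-1)\cdots1$, and when $\sigma$ is not decreasing the stack stays decreasing (read top to bottom) throughout the computation of $\SC_\sigma(12\cdots n)$, so nothing is popped early and $\SC_\sigma(12\cdots n)=n(n-1)\cdots1\in\Av(231)$. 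Second, $\rev(\sigma)\in\Sort(\SC_\sigma)$: running $\rev(\sigma)$ through a consecutive-$\sigma$-avoiding stack — exactly as in the proof of Lemma~\ref{Lem1} — pushes every entry onto the stack except that $\sigma_2$ is popped right before $\sigma_1$ would otherwise complete a copy of $\sigma$, so $\SC_\sigma(\rev(\sigma))=\widehat\sigma$, which avoids $231$ by hypothesis.

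I would first dispose of the clean sub-case $k\geq4$ with $\sigma$ containing $231$ classically. Reversing an occurrence of $231$ in $\sigma$ yields an occurrence of $132$ in $\rev(\sigma)$, and since $\rev(\sigma)$ has $k\geq4$ entries I can delete from $\rev(\sigma)$ some entry not involved in this occurrence to obtain a permutation $\tau$ of length $k-1\geq3$ that still contains $132$ and occurs classically in $\rev(\sigma)$. Because $|\tau|=k-1<k$, the stack in the computation of $\SC_\sigma(\tau)$ never holds $k$ entries, so it never reads off a copy of $\sigma$; hence every entry of $\tau$ is pushed and then popped, giving $\SC_\sigma(\tau)=\rev(\tau)$, which contains $\rev(132)=231$. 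So $\tau\notin\Sort(\SC_\sigma)$ while $\tau$ occurs in $\rev(\sigma)\in\Sort(\SC_\sigma)$, and $\Sort(\SC_\sigma)$ fails to be a permutation class in this sub-case.

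It then remains to treat the cases $k=3$ and ``$k\geq4$ with $\sigma$ avoiding $231$''. Here the easy members of $\Sort(\SC_\sigma)$ above no longer help, because all of their classical patterns also lie in $\Sort(\SC_\sigma)$, so $\pi$ must genuinely use additional entries. The plan is a case analysis governed by the structure of $\sigma$, which the recursive decomposition of $231$-avoiding permutations makes rigid (for instance $\sigma_1>\sigma_2$ forces $\sigma_2=1$, and the possibilities $\sigma_1=k$ and $\sigma_2=k$ must be separated). In each case I would insert a small number of extra entries into a permutation containing $\rev(\sigma)$, arranged so that those entries force a pop at precisely the right step: the resulting $\pi$ satisfies $\SC_\sigma(\pi)\in\Av(231)$, while deleting one entry of $\pi$ yields a $\tau$ for which $\SC_\sigma(\tau)$ contains $231$. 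When $\sigma=231$, for example, one may take $\pi=31524$ and $\tau=2413$, checking that $\SC_{231}(31524)=54213\in\Av(231)$ while $\SC_{231}(2413)=3142\notin\Av(231)$, and noting that $2413$ occurs in $31524$.

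I expect this last step to be the main obstacle: there appears to be no single witness pair that works uniformly for all $\sigma$, so the construction must be adapted to each shape of $\sigma$, and in every instance the substantive part is verifying that $\SC_\sigma(\pi)$ avoids $231$. Unlike in Lemma~\ref{Lem1}, this cannot be read off merely from the positions of consecutive occurrences of $\rev(\sigma)$ in $\pi$; it requires tracking the entire passage of $\pi$ through the stack, in particular which entries are popped early and in what order.
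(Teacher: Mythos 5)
Your overall strategy is right and matches the paper's: produce $\pi \in \Sort(\SC_\sigma)$ together with a classical pattern $\tau$ of $\pi$ satisfying $\tau \notin \Sort(\SC_\sigma)$. The pieces you actually carry out are correct, and two of them are pleasant variants of what the paper does. For ``$k \geq 4$ and $\sigma$ contains $231$,'' your argument — take a length-$(k-1)$ classical sub-pattern of $\rev(\sigma)$ still containing $132$; since it is shorter than $\sigma$ it is simply reversed by $\SC_\sigma$, so its image contains $231$ — is valid. The paper instead records once and for all that $\SC_\sigma(132) = 231$ for every $\sigma \neq 231$ of length at least $3$, so $132 \notin \Sort(\SC_\sigma)$, and then uses that $\rev(\sigma)$ contains $132$. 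Your explicit witnesses $\pi = 31524$, $\tau = 2413$ for $\sigma = 231$ also check out; the paper uses $25314$ in place of $31524$.

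However, the proposal stops precisely where the proof becomes nontrivial: the case $\sigma \in \Av(231)$, which includes every length-$3$ pattern in scope other than $231$ as well as many longer ones. You outline a plan (``insert a small number of extra entries into a permutation containing $\rev(\sigma)$'') and explicitly flag it as unfinished. That is not a small remainder — it is where the hypothesis $\widehat\sigma \in \Av(231)$ must actually be exploited and where each construction must be verified to land in $\Av(231)$ after a genuine early pop. The paper handles it by splitting into three structural sub-cases, namely $\sigma_1 < \sigma_2$ with $\sigma_2 > 2$ (forcing $\sigma_1 = 1$), $\sigma_1 = 1$ with $\sigma_2 = 2$, and $\sigma_1 > \sigma_2$ (which with the hypothesis forces $\sigma_2 = 1$), and in each case writes down a concrete $\pi$ built from a shifted copy of $\rev(\sigma)$ plus one or two extra entries arranged so that exactly one early pop occurs; $\SC_\sigma(\pi)$ then has the same relative order as a small perturbation of $\widehat\sigma$ (or of $\sigma$), and $231$-avoidance follows directly from the hypothesis. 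Until you supply and verify such constructions, the proof has a genuine gap.
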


\begin{proof}
We first dispose of the case in which $\sigma=231$. A simple computation yields $\SC_{231}(2413)=3142\not\in\Av(231)$ and $\SC_{231}(25314)=54132\in\Av(231)$. This shows that $25314$ is in $\Sort(\SC_{231})$ while $2413$ is not. Since $25314$ classically contains $2413$, it follows that $\Sort(\SC_{231})$ is not a permutation class. We may now assume $\sigma\neq 231$. Because the length of $\sigma$ is at least $3$, we have $\SC_\sigma(132)=231$. This shows that $132\not\in\Sort(\SC_\sigma)$. 

Let us now assume that $\sigma$ classically contains $231$ but is not equal to $231$. One can easily check that $\SC_\sigma(\rev(\sigma))=\widehat\sigma$, and we assumed that $\widehat\sigma\in\Av(231)$. This shows that $\rev(\sigma)\in\Sort(\SC_{\sigma})$. Now note that $\rev(\sigma)$ contains $132$ classically and that $132\not\in\Sort(\SC_\sigma)$. It follows that $\Sort(\SC_{\sigma})$ is not a permutation class in this case.  

Next, assume $\sigma\in\Av(231)$, $\sigma_1<\sigma_2$, and $\sigma_2>2$. These assumptions force $\sigma_1=1$ since, otherwise, the entries $\sigma_1,\sigma_2,1$ would form a classical $231$ pattern in $\sigma$. Let $\pi=\sigma_k'\cdots\sigma_2'\sigma_1'\sigma_2$, where $\sigma_i'=\sigma_i$ if $\sigma_i<\sigma_2$ and $\sigma_i'=\sigma_i+1$ if $\sigma_i\geq\sigma_2$. We will prove that $\pi\in\Sort(\SC_\sigma)$. Since the entries $2,\sigma_2',\sigma_2$ form an occurrence of the classical pattern $132$ in $\pi$ and $132\not\in\Sort(\SC_\sigma)$, this will imply that $\Sort(\SC_\sigma)$ is not a permutation class. One can easily compute that $\SC_\sigma(\pi)=\sigma_2'\sigma_2\sigma_1'\sigma_3'\cdots\sigma_k'$. Notice that the permutation $\sigma_2\sigma_1'\sigma_3'\cdots\sigma_k'$ has the same relative order as $\widehat\sigma$, which avoids $231$ classically by hypothesis. Therefore, if $\SC_\sigma(\pi)$ contains $231$, then the first entry in the occurrence of the $231$ pattern must be $\sigma_2'$. Since $\sigma_2'=\sigma_2+1$, we can replace $\sigma_2'$ with $\sigma_2$ in this subsequence in order to obtain an occurrence of the pattern $231$ in $\SC_\sigma(\pi)$ that starts with the entry $\sigma_2$. However, this contradicts our earlier observation that $\sigma_2\sigma_1'\sigma_3'\cdots\sigma_k'$ avoids $231$. 

For our next case, we assume $\sigma\in\Av(231)$, $\sigma_1=1$, and $\sigma_2=2$. Let $\pi=\sigma_k\cdots\sigma_321(k+1)$ be the concatenation of $\rev(\sigma)$ with the entry $k+1$. Let $\pi'=\sigma_k\cdots\sigma_32(k+1)1(k+2)$. The first $3$ entries of the permutation $\SC_\sigma(\pi)=2(k+1)1\sigma_3\cdots\sigma_k$ form a $231$ pattern, so $\pi\not\in\Sort(\SC_\sigma)$. On the other hand, since $\pi'$ certainly avoids the pattern $\rev(\sigma)$ consecutively, we have $\SC_\sigma(\pi')=\rev(\pi')=(k+2)1(k+1)2\sigma_3\cdots\sigma_k$. The assumption $\sigma\in\Av(231)$ guarantees that $\SC_\sigma(\pi')\in\Av(231)$, so $\pi'\in\Sort(\SC_\sigma)$. The permutation $\pi'$ contains $\pi$ classically, so $\Sort(\SC_\sigma)$ is not a permutation class in this case. 

The final case we need to consider is that in which $\sigma\in\Av(231)$ and $\sigma_1>\sigma_2$. Notice that the hypothesis $\widehat\sigma\in\Av(231)$ forces $\sigma_2=1$ in this case. Let $\pi=\sigma_k'\cdots\sigma_2'\sigma_1'2$, where $\sigma_2'=1$ and $\sigma_i'=\sigma_i+1$ for all $i\neq 2$. We have $\SC_\sigma(\pi)=12\sigma_1'\sigma_3'\cdots\sigma_k'$; note that this permutation avoids $231$ because $\sigma$ avoids $231$. Therefore, $\pi\in\Sort(\SC_\sigma)$. The entries $\sigma_2',\sigma_1',2$ form an occurrence of the pattern $132$ in $\pi$, and we have seen that $132\not\in\Sort(\SC_\sigma)$. Therefore, $\Sort(\SC_\sigma)$ is not a permutation class in this final case. 
\end{proof}

\begin{proof}[Proof of Theorem~\ref{ThmClass}]
It is straightforward to check that if $\pi\in S_n$ for some $n\geq 1$, then the last entry in $\SC_{12}(\pi)$ is $1$. It follows that $\SC_{12}(\pi)$ avoids $231$ if and only if it avoids $12$. Using Lemma~\ref{LemComplement}, we find that \[\Sort(\SC_{12})=\SC_{12}^{-1}(\Av(231))=\SC_{12}^{-1}(\Av(12))=\comp(\SC_{21}^{-1}(\comp(\Av(12))))\] \[=\comp(s^{-1}(\Av(21)))=\comp(\Av(231))=\Av(213).\] On the other hand, the set $\Sort(\SC_{21})$ is not a permutation class because it includes the permutation $35241$ but not the permutation $3241$. 

Now assume $\sigma\in S_k$ for some $k\geq 3$. If $\sigma$ and $\widehat\sigma$ both contain $231$ classically, then it follows from Lemma~\ref{Lem1} that \[\Sort(\SC_\sigma)=\Av\left(132,\underline{\rev(\sigma)}\right)=\Av(132).\] If $\widehat\sigma\in\Av(231)$, then Lemma~\ref{Lem2} tells us that $\Sort(\SC_\sigma)$ is not a permutation class. Now assume $\sigma\in\Av(231)$ and $\widehat\sigma\not\in\Av(231)$. Note that this implies that $\sigma_1>\sigma_2>1$. Since $\SC_\sigma(\rev(\sigma))=\widehat\sigma$, the permutation $\rev(\sigma)$ is not in $\Sort(\SC_\sigma)$. However, the permutation $\pi=(\sigma_k+1)\cdots(\sigma_2+1)1(\sigma_1+1)$, which contains $\rev(\sigma)$ classically, is in $\Sort(\SC_\sigma)$. To see this, note that the condition $\sigma_1>\sigma_2>1$ guarantees that $\pi$ avoids $\rev(\sigma)$ consecutively. Therefore, $\SC_\sigma(\pi)=1(\sigma_1+1)(\sigma_2+1)\cdots(\sigma_k+1)$, and this permutation avoids $231$ because $\sigma$ does. Hence, $\Sort(\SC_\sigma)$ is not a permutation class.  
\end{proof}

\section{Periodic Points}\label{Sec:PeriodicPoints}
In this section, we prove the following theorem, which completely classifies the periodic points of the consecutive-$\sigma$-avoiding stack-sorting map for $\sigma \in S_3$. 

\begin{theorem}\label{thm: periodicmaintheorem}
Let $\sigma\in S_3$. The periodic points of the map $\SC_\sigma:S_n\to S_n$ are precisely the permutations in $\Av_n\left(\underline{\sigma},\underline{\rev(\sigma)}\right)$. When $n\geq 2$, each of these periodic points has period $2$. 
\end{theorem}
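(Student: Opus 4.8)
The plan is to prove the two inclusions separately, and then handle the period claim. For the easy direction, I would first show that every permutation $\pi \in \Av_n(\underline{\sigma}, \underline{\rev(\sigma)})$ is a periodic point of period (dividing) $2$. The key observation, already noted in the proof of Lemma~\ref{Lem1}, is that if $\pi$ avoids $\rev(\sigma)$ consecutively then $\SC_\sigma(\pi) = \rev(\pi)$: all entries push onto the stack and pop off in reverse order. Thus I would argue that if $\pi$ avoids both $\sigma$ and $\rev(\sigma)$ consecutively, then $\rev(\pi)$ also avoids both $\sigma$ and $\rev(\sigma)$ consecutively (since the consecutive patterns of $\rev(\pi)$ are exactly the reverses of the consecutive patterns of $\pi$), so $\SC_\sigma(\pi) = \rev(\pi)$ and $\SC_\sigma(\rev(\pi)) = \rev(\rev(\pi)) = \pi$. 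Hence $\SC_\sigma^2(\pi) = \pi$ and $\pi$ is periodic. When $n \geq 2$, $\rev(\pi) \neq \pi$ unless $\pi$ is a palindrome, but a length-$n$ permutation word cannot equal its own reverse for $n \geq 2$ (the first and last entries would have to be equal), so the period is exactly $2$.

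For the harder direction, I need to show that every periodic point of $\SC_\sigma : S_n \to S_n$ lies in $\Av_n(\underline{\sigma}, \underline{\rev(\sigma)})$; equivalently, if $\pi$ contains $\sigma$ or $\rev(\sigma)$ consecutively, then $\pi$ is not periodic. By Lemma~\ref{LemComplement}, $\SC_{\comp(\sigma)} = \comp \circ \SC_\sigma \circ \comp$, and $\comp$ is an involution, so periodic points of $\SC_{\comp(\sigma)}$ are the $\comp$-images of periodic points of $\SC_\sigma$; since $\Av_n(\underline{\sigma}, \underline{\rev(\sigma)})$ maps under $\comp$ to $\Av_n(\underline{\comp(\sigma)}, \underline{\rev(\comp(\sigma))})$, it suffices to prove the theorem for one representative in each complementation pair. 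The six elements of $S_3$ split into the pairs $\{123, 321\}$, $\{132, 312\}$, $\{213, 231\}$, so I would handle the three cases $\sigma \in \{123, 132, 213\}$ (or whichever representatives are most convenient). In each case the strategy is: suppose $\pi$ is a periodic point but contains a forbidden consecutive pattern, and derive a descent-count or inversion-type monotonicity contradiction. A natural candidate monotone statistic is the number of descents, or more precisely the length of the first ascending run, together with the observation that $\SC_\sigma$ is eventually reversing on "most" blocks. Concretely, I expect that when $\pi$ contains $\sigma$ or $\rev(\sigma)$ consecutively, one iteration of $\SC_\sigma$ strictly decreases some well-chosen nonnegative integer statistic (for instance, a weighted count of consecutive occurrences of $\sigma$ and $\rev(\sigma)$, or the number of non-inversions, i.e.\ ascents in a suitable sense), which is impossible for a periodic point since a strictly decreasing sequence of nonnegative integers cannot return to its starting value.

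The main obstacle will be identifying and verifying the right monotone statistic (or the right structural invariant) that is non-increasing under $\SC_\sigma$ for all $\pi$ and strictly decreasing whenever a forbidden consecutive pattern is present. This is delicate because $\SC_\sigma$ can both create and destroy consecutive patterns, and the interaction between the stack contents and the output is subtle. One promising approach is to track the behavior of the \emph{first} consecutive occurrence of $\sigma$ or $\rev(\sigma)$ in $\pi$, analyze exactly what the stack does when it reaches that window (as in the proof of Lemma~\ref{Lem1}, where containing $\rev(\sigma)$ consecutively forces a specific pop-push interleaving that produces a $\widehat\sigma$-patterned factor in the output), and show that this forces the output to be strictly "more sorted" in the appropriate sense — e.g.\ the position of the first descent moves strictly to the right, or the leftmost entry that is out of place moves. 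A cleaner alternative, which I would try first, is to show directly that the sequence $\pi, \SC_\sigma(\pi), \SC_\sigma^2(\pi), \ldots$ has weakly increasing "sortedness" and that equality across one step forces $\pi \in \Av(\underline{\sigma}, \underline{\rev(\sigma)})$; combined with finiteness of $S_n$, this pins down the periodic points exactly. I would expect the three small cases $\sigma \in \{123, 132, 213\}$ to each require a short but separate combinatorial argument about how the stack processes a consecutive occurrence of the relevant length-$3$ pattern.
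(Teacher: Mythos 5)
Your proof of the easy direction (that every $\pi \in \Av_n(\underline{\sigma}, \underline{\rev(\sigma)})$ is a period-$2$ point) is correct and matches the paper: $\SC_\sigma(\pi) = \rev(\pi)$ whenever $\pi$ consecutively avoids $\rev(\sigma)$, reversal preserves the class, and a permutation of length $\geq 2$ is never a palindrome. Your reduction via Lemma~\ref{LemComplement} to three representative patterns is also the right move (the paper uses $\{123, 132, 231\}$ rather than $\{123, 132, 213\}$, but since $\rev(132)=231$ it handles $\{132,231\}$ with a single argument, which is a slight simplification worth noting).

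However, the hard direction — showing that a periodic point must lie in $\Av_n(\underline{\sigma},\underline{\rev(\sigma)})$ — is where your proposal has a genuine gap. You correctly identify that the strategy should be to find a statistic that is monotone under $\SC_\sigma$ and strictly changes when a forbidden consecutive pattern is present, but you do not actually produce such a statistic in any of the three cases, and the candidates you float (number of descents, position of the first descent, number of non-inversions, a weighted count of consecutive occurrences) do not work without substantial modification. The paper's actual choices are nontrivial and different in kind between cases. For $\sigma \in \{132, 231\}$, the statistic is $D(\tau)$, the number of entries lying between $1$ and $2$ in $\tau$: one checks that $1$ and $2$ never pop until everything has entered the stack, that $D$ is weakly decreasing under $\SC_\sigma$, and that if $D(\tau)>0$ then $D(\SC_\sigma^2(\tau)) < D(\tau)$; this is then coupled with an induction on $n$ by deleting the entry $1$ once $1$ and $2$ become adjacent. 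For $\sigma = 321$, the argument is considerably more involved: it requires the explicit formula $\SC_{321}(\pi) = a_1^m\cdots a_k^m \rev(a_k^e)\cdots\rev(a_1^e)$ in terms of ascending runs, the statistic $g(\pi)$ counting peaks plus valleys (shown weakly increasing with a delicate equality analysis in Lemma~\ref{LemSC321 PV}), and two further corollaries ruling out long ascending runs at the beginning or end of a periodic point. None of this is recoverable from the level of detail in your proposal. Your suggestion of tracking "the first consecutive occurrence of $\sigma$ or $\rev(\sigma)$" is also not what the paper does and would face the difficulty you yourself flag: $\SC_\sigma$ can both create and destroy consecutive occurrences, so a naive count of occurrences need not be monotone. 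In short, the scaffolding is right, but the essential content — the actual invariants and the verification that they work — is missing.
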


Lemma~\ref{LemComplement} tells us that for each permutation pattern $\sigma$, the maps $\SC_\sigma$ and $\SC_{\comp(\sigma)}$ are topologically conjugate (with the discrete topology), with the complementation map $\comp$ serving as the topological conjugacy. It follows that if Theorem~\ref{thm: periodicmaintheorem} holds when $\sigma$ is some specific pattern $\tau$, then it also holds when $\sigma=\comp(\tau)$. Thus, we really only need to prove the theorem when $\sigma\in\{123,132,231\}$. We first handle the case in which $\sigma\in\{132,231\}$.

\begin{proposition}\label{Prop:Periodic132}
For $\sigma \in \{132, 231\}$, the set of periodic points of the map $\SC_{\sigma}:S_n\to S_n$ is the set $\Av_n(\underline{132},\underline{231})$ of permutations in $S_n$ that avoid $132$ and $231$ consecutively (these permutations also avoid $132$ and $231$ classically). When $n\geq 2$, these points have period $2$.
\end{proposition}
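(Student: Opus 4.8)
The plan is to prove Proposition~\ref{Prop:Periodic132} for $\sigma = 132$; the case $\sigma = 231$ then follows by a symmetry argument (either directly or by noting $231 = \rev(132)$ and analyzing the reversal). Throughout, fix $n$ and abbreviate $\SC := \SC_{132}$. The proposition has two directions: first, that every permutation in $\Av_n(\underline{132},\underline{231})$ is a period-$2$ point of $\SC$ (for $n \geq 2$), and second, that every periodic point lies in $\Av_n(\underline{132},\underline{231})$. I expect the second direction to be the main obstacle, since it requires showing that any permutation containing $\underline{132}$ or $\underline{231}$ eventually escapes to a non-periodic trajectory.

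\textbf{Forward direction.} First I would observe that if $\pi$ avoids $\underline{132}$ consecutively, then when we run $\pi$ through a consecutive-$132$-avoiding stack, the stack is never blocked by a $132$-pattern; the only thing that can force a pop is the arrival of an entry that would create a consecutive $132$ among the top three stack entries together with the incoming entry. A short analysis of the stack mechanics (comparing the incoming entry only with the top two entries of the stack, as emphasized in the introduction) should show that if $\pi$ avoids $\underline{132}$, then $\SC(\pi)$ avoids $\underline{231}$, and symmetrically if $\pi$ avoids $\underline{231}$ then $\SC(\pi)$ avoids $\underline{132}$; moreover in each case $\SC$ acts by a controlled rearrangement. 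The cleanest route is to show directly that for $\pi \in \Av_n(\underline{132},\underline{231})$ we have $\SC(\pi) \in \Av_n(\underline{132},\underline{231})$ and $\SC^2(\pi) = \pi$, i.e.\ $\SC$ restricted to this set is an involution. For the period being exactly $2$ (not $1$) when $n \geq 2$: I would check that $\SC$ has no fixed point of length $\geq 2$, since a fixed point would have to equal its own reverse-through-the-stack image, forcing the identity, but the identity $12\cdots n$ satisfies $\SC(12\cdots n) = n\cdots 21 \neq 12\cdots n$ for $n \geq 2$ (the whole permutation avoids $\underline{132}$ so everything stacks and reverses). I also need to confirm $12\cdots n$ and $n\cdots 21$ both lie in $\Av_n(\underline{132},\underline{231})$, which is immediate, and that they form a $2$-cycle.

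\textbf{Reverse direction (the hard part).} Here I would argue by contradiction: suppose $\pi \in S_n$ is a periodic point that contains $\underline{132}$ or $\underline{231}$ consecutively; since the orbit consists of periodic points, some element of the orbit contains one of these consecutive patterns. The key structural claim I would aim to establish is a monotonicity or "progress" invariant: for instance, that applying $\SC$ to a permutation containing a consecutive $132$ or $231$ strictly increases some statistic (a natural candidate is something like the number of non-inversions, or the length of the final ascending run, or a carefully chosen potential function measuring how "sorted from the right" the permutation is), which is bounded above, so it cannot return to its starting value — contradicting periodicity. The mechanism behind such progress is that a consecutive $\underline{132}$ occurrence $\pi_i\pi_{i+1}\pi_{i+2}$ causes $\pi_{i+1}$ to pop before $\pi_{i+2}$ enters, which (as in the proof of Lemma~\ref{Lem1}) reorders a block into the relative order of $\widehat{(132)} = 312$ or a reverse thereof, and tracking how these local reorderings interact should show the statistic genuinely increases and never decreases under $\SC$. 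Making this potential function precise, and verifying it is both monotone under $\SC$ and strictly changed whenever a forbidden consecutive pattern is present, is the crux of the argument; I would likely need a case analysis on the position of the leftmost consecutive occurrence of $\underline{132}$ or $\underline{231}$ and careful bookkeeping of the stack contents at that moment.
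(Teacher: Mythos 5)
The forward direction of your proposal is essentially right, though the reasoning is muddled: what makes the stack push all entries before popping is that $\pi$ avoids $\rev(\sigma)$ as a \emph{consecutive} pattern (see the proof of Lemma~\ref{Lem1}), not that $\pi$ avoids $\underline{132}$ per se. Since $\{132,231\}=\{\sigma,\rev(\sigma)\}$ for $\sigma\in\{132,231\}$, a permutation $\pi\in\Av(\underline{132},\underline{231})$ does satisfy $\SC_\sigma(\pi)=\rev(\pi)$, the set $\Av(\underline{132},\underline{231})$ is closed under $\rev$, and $\SC_\sigma^2(\pi)=\pi$. So the conclusion is fine even if the stated mechanism is slightly off; you should also drop the ``forcing the identity'' sentence, as $\SC_\sigma(\pi)=\rev(\pi)\ne\pi$ for every $\pi\in\Av_n(\underline{132},\underline{231})$ with $n\geq2$, not just for the identity.

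The reverse direction is where there is a genuine gap. You correctly guess that a monotone ``progress'' statistic is the engine, but none of your candidates (non-inversions, length of the final ascending run, ``sortedness from the right'') is the right one, and more importantly a pure potential-function argument cannot close the proof by itself. The paper's statistic is $D(\tau)$, the number of entries lying between the values $1$ and $2$ in $\tau$. It is weakly decreasing under $\SC_\sigma$ (because $1$ and $2$ never trigger a pop when at the top, so everything between them in the stack at the end is a subset of what was between them in the input), and whenever $D(\tau)>0$ one gets a strict decrease after at most two applications (the block between $1$ and $2$ must contain a consecutive $\sigma$ or $\rev(\sigma)$). But once $D=0$ the statistic is frozen, even though the permutation may still contain forbidden consecutive patterns, so monotonicity alone proves nothing. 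The missing ingredient is an induction on $n$: once $1$ and $2$ are adjacent, they stay adjacent under $\SC_\sigma$, deleting $1$ commutes with $\SC_\sigma$ (i.e.\ $(\SC_\sigma(\tau))^*=\SC_\sigma(\tau^*)$), and one applies the inductive hypothesis to the length-$(n-1)$ permutation $\tau^*$ to conclude that $\SC_\sigma^t(\pi)$ eventually lies in $\Av(\underline{132},\underline{231})$. Without identifying the specific statistic $D$ and, crucially, this induction-after-collision structure, your outline would stall exactly where the real work begins.
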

\begin{proof}

Note that $\{132,231\}=\{\sigma,\rev(\sigma)\}$. It is clear that if $\pi \in \Av(\underline{132}, \underline{231})$, then $\SC_{\sigma}(\pi) = \rev(\pi) \in \Av(\underline{132}, \underline{231})$, so $\SC_{\sigma}^2(\pi) = \pi$. Hence, $\pi$  is a periodic point of $\SC_{\sigma}$, and the period is $2$ if $n\geq 2$. 

We now prove that every periodic point of $\SC_\sigma:S_n\to S_n$ is in $\Av(\underline{132},\underline{231})$. This is trivial if $n\leq 2$, so we may assume $n\geq 3$ and induct on $n$. Given $\tau\in S_n$, let $\tau^*$ be the permutation in $S_{n-1}$ obtained by deleting the entry $1$ from $\tau$ and then decreasing all remaining entries by $1$. Observe that if the entries $1$ and $2$ appear consecutively in $\tau$, then they also appear consecutively in $\SC_\sigma(\tau)$ and that $(\SC_{\sigma}(\tau))^*=\SC_{\sigma}(\tau^*)$. Now fix $\pi\in S_n$. By induction, we have $\SC_\sigma^t(\pi^*)\in\Av(\underline{132},\underline{231})$ for all sufficiently large $t$.  We will prove that if $t$ is sufficiently large, then the entries $1$ and $2$ appear consecutively in $\SC_\sigma^t(\pi)$. It will then follow that $(\SC_{\sigma}^t(\pi))^*=\SC_\sigma^t(\pi^*)\in \Av(\underline{132},\underline{231})$ for all sufficiently large $t$, which will imply that $\SC_{\sigma}^t(\pi)\in\Av(\underline{132},\underline{231})$ for all sufficiently large $t$. 

Let $D(\tau)$ denote the number of entries between $1$ and $2$ in $\tau$ (regardless of whether $1$ appears before or after $2$ in $\tau$). First, note that neither $1$ nor $2$ will get popped from the stack until after all of the entries of $\tau$ have entered the stack because neither $1$ nor $2$ can trigger the consecutive-$\sigma$-avoiding restriction when at the top of the stack. This immediately implies that $D(\tau) \leq D(\SC_{\sigma}(\tau))$. We further show that if $D(\tau)>0$, then $D(\SC_{\sigma}^2(\tau))<D(\tau)$, implying that $1$ and $2$ are consecutive entries in $\SC_{\sigma}^t(\pi)$ for all sufficiently large $t$. Let $\{1, 2\} = \{\tau_i, \tau_{i+c} \}$ so that $D(\pi) = c-1$, and assume $c\geq 2$. The consecutive subsequence $\tau_i\tau_{i+1} \cdots \tau_{i+c}$ of $\tau$ contains a consecutive occurrence of either $\sigma$ or $\rev(\sigma)$. If it contains a consecutive occurrence of $\rev(\sigma)$, then when $\tau_{i+c}$ enters the stack, the entries between $\tau_i$ and $\tau_{i+c}$ form a strict subset of $\{\tau_{i+1},\ldots, \tau_{i+c-1}\}$. Moreover, these entries are precisely the entries between $1$ and $2$ in $\SC_{\sigma}(\tau)$ since neither $1$ nor $2$ is popped until all entries of $\tau$ have entered the stack. This implies that $D(\SC_{\sigma}(\tau)) < D(\tau)$. If there is no consecutive occurrence of $\rev(\sigma)$ in the consecutive subsequence $\tau_i\tau_{i+1} \cdots \tau_{i+c}$ of $\tau$, then $\tau_{i+c} \tau_{i+c-1} \cdots \tau_i$ is a consecutive subsequence of $\SC_{\sigma}(\tau)$ that contains a consecutive occurrence of $\rev(\sigma)$, so the previous argument guarantees that $D(\SC_{\sigma}^2(\tau)) < D(\SC_\sigma(\tau))\leq D(\tau)$.
\end{proof}

Next, we consider the periodic points of $\SC_{123}$ and $\SC_{321}$. By complementing, it suffices to just consider $\SC_{321}$. To understand this map, it helps to decompose a permutation $\pi$ by writing $\pi = a_1 \cdots a_k$, where the $a_i$'s are the ascending runs (maximal consecutive increasing subsequences). In addition, let $a_i^m$ be the subsequence of $a_i$ consisting of the entries that are not the first or last entries in $a_i$, and let $a_i^{e}$ be the sequence obtained from $a_i$ by deleting $a_i^m$. If $|a_i|$ is $1$ or $2$, then $a_i^{e} = a_i$ and $a_i^{m}$ is an empty sequence. For example, the ascending runs of $4572136$ are $a_1 = 457$, $a_2 = 2$, and $a_3 = 136$, with $a_1^e = 47, a_2^e = 2, a_3^e = 16, a_1^m = 5, a_2^m$ empty, and $a_3^m = 3$.

\begin{lemma} \label{LemSC321 map}
Keeping the notation above, let $\pi = a_1 \cdots a_k$, where the $a_i$'s are ascending runs. Then $\SC_{321}(\pi) = a_1^m \cdots a_k^m \rev(a_1^e \cdots a_k^e) = a_1^m \cdots a_k^m \rev(a_k^e) \cdots \rev(a_1^e)$.
\end{lemma}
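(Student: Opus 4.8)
The plan is to trace the dynamics of the stack directly, using the fact that the consecutive-$321$-avoiding restriction can only be triggered by three entries forming a (consecutive) decreasing subsequence in the stack. First I would observe that when we push the entries of $\pi=a_1\cdots a_k$ through the stack, the only moments at which a pop is forced are precisely the descents of $\pi$, i.e., the boundaries between consecutive ascending runs $a_i$ and $a_{i+1}$. So the process is really controlled by what sits at the top two positions of the stack at each such moment.

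The key structural claim I would isolate is this: within a single ascending run $a_i=b_1b_2\cdots b_\ell$, the entries enter the stack one at a time while the output stays idle (no pops), because each new entry is larger than the one below it, so the top two stack entries are increasing and cannot complete a $321$ pattern. Then, at the descent into $a_{i+1}$, the next input entry is smaller than $b_\ell$; if $\ell\ge 3$ the top two stack entries are $b_\ell,b_{\ell-1}$ (increasing from bottom to top reads $b_{\ell-1},b_\ell$), and adding a smaller entry on top produces the consecutive pattern (small)$<b_{\ell-1}<b_\ell$ read top-to-bottom — wait, one must be careful about orientation; I would check that the top-to-bottom reading is the decreasing word, triggering a pop of $b_\ell$. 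Crucially, only $b_\ell$ pops: once $b_\ell$ is gone, the new top two are $b_{\ell-1}$ and the incoming entry, which do not form a forbidden configuration with whatever is below. Thus exactly the \emph{last} entry of each ascending run gets popped early (at its descent), while all the \emph{first} and \emph{interior} entries remain. I also need the boundary case $\ell\le 2$: then $a_i^m$ is empty and $a_i^e=a_i$, and I should verify the run's last entry still pops at the descent (for $\ell=2$) or that a singleton run's entry behaves correctly (it pops immediately if it creates a descent, otherwise not), matching the formula.

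From here the bookkeeping is: the entries that pop ``early'' are exactly $b_\ell$ for each run of length $\ge 2$, and these leave in left-to-right input order, but actually I should double-check — when $b_\ell$ of run $a_i$ pops, only it leaves, and then processing resumes; so the early-popped entries appear in the output in the order the runs occur, i.e., $a_1^m$ contributes nothing but... here I need to reconcile with the stated formula $\SC_{321}(\pi)=a_1^m\cdots a_k^m\,\rev(a_1^e\cdots a_k^e)$. Re-examining: the entries that stay in the stack until the very end are the interior entries $a_i^m$ together with the ``edge'' entries $a_i^e$, and these all pop at the end in reverse-of-entry order; the entries popped early must be... I think the correct reading is that the \emph{middle} entries $a_i^m$ get popped early in order (since pushing $a_{i+1}$'s first entry onto a tall increasing tower of $a_i$ forces repeated pops down to the second entry), so the prefix $a_1^m\cdots a_k^m$ is the concatenation of early pops in order, and then the remaining edge entries $a_i^e$ pop at the end, yielding $\rev(a_1^e\cdots a_k^e)$. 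So the real content is: at each descent, the stack is popped from the top down, peeling off all of run $a_i$ except its first two entries, i.e., exactly $a_i^m$ in reverse... no, $a_i^m$ as a block. I would carefully track a small example like $4572136$ to pin down the exact order before writing, then prove it by induction on $k$ (number of runs) together with an invariant describing the stack contents (from bottom: $a_1^e\cdots a_{i-1}^e$ followed by all of $a_i$) right before the descent into $a_{i+1}$ is processed.

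The main obstacle I expect is getting the pop-orientation and the ``how far down does the cascade go'' detail exactly right: specifically, proving that pushing the first entry $c$ of $a_{i+1}$ onto the stack forces popping of all of $a_i$ down to but \emph{not including} its second entry $b_2$ (so $b_1,b_2$ survive), and that the cascade stops there because the configuration $b_2$ on top of $b_1$ (or $b_1$ on top of whatever edge entries lie below) together with $c$ is no longer a consecutive $321$. This requires knowing $c<b_\ell$ but being careful that $c$ may or may not be smaller than $b_1,b_2$; the consecutive-pattern condition only looks at the top two stack entries plus the incoming one, so $b_1<b_2$ increasing at the bottom of the cascade kills any $321$, and that is what arrests the popping. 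Once that lemma about a single descent is nailed down, the induction on the number of ascending runs is routine, and the final flush of the stack at the end of the input gives the trailing $\rev(a_1^e\cdots a_k^e)=\rev(a_k^e)\cdots\rev(a_1^e)$.
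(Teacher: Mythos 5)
The mechanism you describe is incorrect, and it would actually produce the wrong output order. Your first structural claim — that within a single ascending run $a_i = b_1 b_2 \cdots b_\ell$ no pops occur, and that the only pops happen at descents between runs — is false, and your orientation reasoning has it backwards. Reading the stack top-to-bottom, pushing $x$ onto a stack with top $y$ and second-from-top $z$ creates a consecutive $321$ precisely when $x > y > z$. So inside an ascending run, after $b_1$ and $b_2$ are in the stack (with $b_2$ on top), attempting to push $b_3$ gives top-to-bottom $b_3, b_2, b_1$, which \emph{is} a consecutive $321$ since $b_3 > b_2 > b_1$; hence $b_2$ pops. Once $b_2$ is gone, $b_3$ sits on top of $b_1$, and attempting $b_4$ then forces $b_3$ out, and so on. Thus the interior entries $b_2, \dots, b_{\ell-1}$ pop one at a time, \emph{in that left-to-right order}, during the processing of $a_i$ itself, leaving only $b_1$ and $b_\ell$ in the stack (with $b_\ell$ on top of $b_1$). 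At the descent into $a_{i+1}$, no pop occurs at all: the first entry $c$ of $a_{i+1}$ satisfies $c < b_\ell$, so the top three after pushing $c$ would read $c, b_\ell, b_1$ with $c < b_\ell$, which is not a $321$.

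Your picture — a cascade triggered at the descent, peeling off the top of $a_i$ down to $b_2$ — would both leave the wrong entries ($b_1, b_2$ rather than $b_1, b_\ell$) in the stack and would pop the interior entries in reverse order $b_{\ell-1}, b_{\ell-2}, \dots$, contradicting the statement $a_1^m \cdots a_k^m$ appears in forward order. You do at one point arrive at the correct conclusion (``the middle entries $a_i^m$ get popped early in order''), but by the end you revert to the incorrect descent-cascade explanation, so the proposal as written does not justify the lemma. The paper's proof is exactly the corrected version of your idea: observe that each interior entry of a run is popped as soon as the next entry of the same run tries to enter, that the first and last entries of every run survive until all of $\pi$ has been pushed, and that the final flush reverses the surviving sequence $a_1^e \cdots a_k^e$.
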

\begin{proof}
When the ascending run $a_i$ enters the consecutive-$321$-avoiding stack, each entry in $a^m$ is popped immediately after it enters while $a_i^e$ remains in the stack. After all entries have been pushed into the stack, the entries remaining in the stack are popped out in the reverse of the order in which they entered and thus the output is $ a_1^m \cdots a_k^m \rev(a_1^e \cdots a_k^e) =a_1^m \cdots a_k^m \rev(a_k^e) \cdots \rev(a_1^e)$.
\end{proof}

To identify the periodic points of $\SC_{321}$, we analyze how the number of peaks and valleys of a permutation $\pi$ changes as $\SC_{321}$ is iteratively applied to $\pi$. A \emph{peak} (respectively, \emph{valley}) of $\pi = \pi_1 \cdots \pi_n$ is an index $i\in\{2,\ldots,n-1\}$ such that $\pi_{i-1}<\pi_i>\pi_{i+1}$ (respectively, $\pi_{i-1}>\pi_i<\pi_{i+1}$). If $i$ is a peak (respectively, valley) of $\pi$, then the entry $\pi_i$ is called a \emph{peak top} (respectively, \emph{valley bottom}) of $\pi$. We let $g(\pi)$ denote the total number of indices that are either peaks or valleys of $\pi$. 

\begin{lemma} \label{LemSC321 PV}
Let $\pi = a_1 \cdots a_k\in S_n$, where $a_1,\ldots,a_k$ are the ascending runs of $\pi$. We have $g(\pi) \leq g(\SC_{321}(\pi))$. Moreover, if $|a_k|\geq 2$, then this inequality is an equality if and only if $a_1^m \cdots a_k^m \rev(a_k^e)$ is a decreasing sequence. If $|a_k|=1$, then $g(\pi)= g(\SC_{321}(\pi))$ if and only if $a_1^m \cdots a_k^m \rev(a_k^e)$ is an increasing sequence. 
\end{lemma}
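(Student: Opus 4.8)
The plan is to analyze how peaks and valleys of $\pi$ relate to those of $\SC_{321}(\pi)$ using the explicit description from Lemma~\ref{LemSC321 map}, namely $\SC_{321}(\pi) = a_1^m \cdots a_k^m \rev(a_k^e) \cdots \rev(a_1^e)$. First I would observe that the output permutation splits into two natural blocks: the ``middle'' block $M := a_1^m \cdots a_k^m$, which consists of the interior entries of the ascending runs listed in their original order, and the ``endpoint'' block $E := \rev(a_k^e)\cdots\rev(a_1^e)$. Crucially, each $a_i^m$ is an increasing sequence (being a consecutive subword of the ascending run $a_i$) and each $\rev(a_i^e)$ is a decreasing sequence of length at most $2$. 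The key structural fact I would extract is that every interior entry of an ascending run $a_i$ in the \emph{original} permutation $\pi$ is neither a peak top nor a valley bottom of $\pi$ (peaks and valleys of $\pi$ occur exactly at the junctions between consecutive ascending runs, i.e.\ at the last entry of $a_i$ — a peak — and the first entry of $a_{i+1}$ — a valley, except possibly at the ends). So $g(\pi)$ counts, roughly, twice the number of ``internal'' run-junctions. The strategy is to set up a matching or counting argument showing each peak/valley of $\pi$ forces a peak/valley of $\SC_{321}(\pi)$.

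The main work is to locate peaks and valleys in $\SC_{321}(\pi)$. Within $E = \rev(a_k^e)\cdots\rev(a_1^e)$, I would analyze the pattern: since $a_i^e$ is an increasing pair (or singleton) consisting of the smallest and largest entries of $a_i$, and the runs $a_1,\dots,a_k$ partition $[n]$, reversing and concatenating creates a predictable sequence of descents (inside each $\rev(a_i^e)$) and ascents (between blocks, exactly when $\min a_i > \min a_{i+1}$, etc.). I expect that each original run-junction of $\pi$ contributes a peak or valley inside $E$ (or at the $M$–$E$ boundary), giving $g(\SC_{321}(\pi)) \geq g(\pi)$. For the equality analysis I would then ask: when does $\SC_{321}(\pi)$ have \emph{no extra} peaks/valleys beyond those forced by $\pi$? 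This is where the middle block $M$ enters. The block $M$ is a concatenation of increasing sequences; internal structure of $M$ can only create additional peaks/valleys unless $M$ itself, together with the first entry of $E$ (which is the top of $\rev(a_k^e)$, namely $\max a_k$), is monotone. Hence I anticipate that equality forces $a_1^m \cdots a_k^m \rev(a_k^e)$ to be monotone — decreasing when $|a_k|\geq 2$ (so that the first entry of $E$ is a genuine descent continuing the pattern) and increasing when $|a_k|=1$ (so $\rev(a_k^e)=a_k^e=a_k$ is a single entry and $M$ itself must avoid creating a valley). The casework on $|a_k|$ reflects whether the final ascending run contributes an interior descent in $E$ or not, which changes the monotonicity needed to suppress a spurious extremum at the $M$-to-$E$ transition.

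Concretely, the steps in order would be: (1)~write $\SC_{321}(\pi) = M \cdot E$ with $M$ and $E$ as above, and record that each $a_i^m$ is increasing and each $\rev(a_i^e)$ has length $\le 2$; (2)~identify the peaks and valleys of $\pi$ with the run-junctions, counting $g(\pi)$ explicitly in terms of the shape of the runs; (3)~show that inside $E$ the relative order of the pairs $\rev(a_i^e)$ reproduces, junction-by-junction, the extremal behavior of $\pi$, so that $g(E$-part of $\SC_{321}(\pi)) \ge g(\pi)$, with any deficit in one place compensated elsewhere — this needs care because a singleton run ($a_i^e$ of length $1$) behaves differently from a length-$2$ one; (4)~argue that $M$ contributes $\ge 0$ further peaks/valleys, with $=0$ precisely when $M$ followed by the first entry of $E$ is monotone, and determine from the $|a_k|$ casework whether that monotonicity is increasing or decreasing; (5)~combine to get $g(\pi)\le g(\SC_{321}(\pi))$ and the stated equality criterion.

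The main obstacle I foresee is step~(3)–(4): bookkeeping the peaks and valleys at the three types of boundary in $\SC_{321}(\pi)$ — inside $M$, at the $M$–$E$ seam, and inside/between the $\rev(a_i^e)$ blocks of $E$ — while correctly handling the degenerate cases where some $a_i$ has length $1$ or $2$ (so $a_i^m$ is empty) and especially the boundary case of the \emph{last} run $a_k$, which is exactly why the statement bifurcates on whether $|a_k|\ge 2$ or $|a_k|=1$. Getting a clean, uniform way to say ``$\pi$ has a peak or valley at junction $j$ if and only if $\SC_{321}(\pi)$ has one at the corresponding location, \emph{plus} $M$-internal extrema'' — without drowning in subcases — will be the crux, and I would likely organize it by first proving the inequality in full generality and then separately chasing down the equality condition.
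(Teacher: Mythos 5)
Your plan matches the paper's approach: use Lemma~\ref{LemSC321 map} to write $\SC_{321}(\pi)=a_1^m\cdots a_k^m\,\rev(a_k^e)\cdots\rev(a_1^e)$, observe that the extrema of $\pi$ live at run junctions, and reduce the equality analysis to monotonicity of the prefix $a_1^m\cdots a_k^m\rev(a_k^e)$, with the $|a_k|\ge 2$ vs.\ $|a_k|=1$ dichotomy arising from whether $\rev(a_k^e)$ contributes a forced descent. The one refinement worth noting is that the ``deficit compensated elsewhere'' bookkeeping you anticipate in step~(3) is unnecessary: the paper's cleaner observation is that each peak top (resp.\ valley bottom) of $\pi$ is the second (resp.\ first) entry of $a_j^e$ for some run $a_j$ with $|a_j|\ge 2$, and that exact entry remains a peak top (resp.\ valley bottom) of $\SC_{321}(\pi)$ because the consecutive block $\rev(a_{j+1}^e)\rev(a_j^e)$ (resp.\ $\rev(a_j^e)\rev(a_{j-1}^e)$) realizes it there, so the inequality follows from an injection of extrema rather than a global count.
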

\begin{proof}
We specifically show that the peak tops and valley bottoms of $\pi$ are still peak tops and valley bottoms in $\SC_{321}(\pi)$. The key observation is that each peak top (valley bottom) of $\pi$ is the second (first) entry of $a_j^e$ for some ascending run $a_j$ of length at least $2$, and it is not the first or last entry of $\pi$.

Suppose the entry $\pi_i$ is a peak top of $\pi$. Then $\pi_i$ is the last entry of some ascending run $a_j$ of length at least $2$. We have $a_j^e=\pi_h\pi_i$ for some $\pi_h<\pi_i$. Similarly, $\pi_{i+1}$ is the first entry of the ascending run $a_{j+1}$ and is less than $\pi_i$. By Lemma~\ref{LemSC321 map}, the sequence $\rev(a_{j+1}^e)\rev(a_j^e)$, which itself has $\pi_{i+1}\pi_i\pi_h$ as a consecutive subsequence, is a consecutive subsequence of $\SC_{321}(\pi)$. Consequently, $\pi_i$ is still a peak top of $\SC_{321}(\pi)$.
A completely analogous argument shows that every valley bottom of $\pi$ is also a valley bottom of $\SC_{321}(\pi)$, so $g(\pi)\leq g(\SC_{321}(\pi))$.

For the equality cases, suppose first that $a_k$ has length at least $2$. Note that $g(\pi) = g(\SC_{321}(\pi))$ if and only if no entry in $a_1^m \cdots a_k^m \rev(a_k^e)$ except for the last entry is a peak top or valley bottom in $\SC_{321}(\pi)$. This happens if and only if $a_1^m \cdots a_k^m \rev(a_k^e)$ is an increasing or decreasing sequence. Since $\rev(a_k^e)$ is a decreasing sequence, it follows that $g(\pi) = g(\SC_{321}(\pi))$ if and only if $a_1^m \cdots a_k^m \rev(a_k^e)$ is a decreasing sequence. 

Now suppose $a_k$ has length $1$. We again have $g(\pi) = g(\SC_{321}(\pi))$ if and only if no element in $a_1^m \cdots a_k^m \rev(a_k^e)$ is a peak top or valley bottom in $\SC_{321}(\pi)$. Note that $a_1^m \cdots a_k^m \rev(a_k^e)$ is part of the consecutive subsequence $a_1^m \cdots a_k^m \rev(a_k^e)\rev(a_{k-1}^e)$ in $\SC_{321}(\pi)$. Since $a_k$ has length $1$, it is a single entry that is less than the entry before it in $\pi$; this entry before it is also the entry after it in $a_1^m \cdots a_k^m \rev(a_k^e)\rev(a_{k-1}^e)$. Thus, no entry in $a_1^m \cdots a_k^m \rev(a_k^e)$ is a peak top or valley bottom if and only if $a_1^m \cdots a_k^m \rev(a_k^e)$ is an increasing sequence. 
\end{proof}

By the previous lemma, it is clear that if $g(\pi) < g(\SC_{321}(\pi))$, then $\pi$ cannot be a periodic point. We make use of this fact when proving the next few results. 

\begin{corollary} \label{Lem inc3}
Let $\pi = a_1 \cdots a_k$, where the $a_i$'s denote ascending runs. If $|a_k|\geq 3$ or $|a_1|=|a_2|=1$, then $\pi$ is not a periodic point of $\SC_{321}$.
\end{corollary}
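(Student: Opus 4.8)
The plan is to treat the two sufficient conditions separately and, in each case, to produce a point on the forward orbit of $\pi$ at which $g$ strictly increases. Since Lemma~\ref{LemSC321 PV} shows $g$ is non-decreasing under $\SC_{321}$, a single such strict increase anywhere on the orbit rules out $\pi$ being a periodic point (as recorded in the remark just before the corollary). Throughout I would assume $n\geq 3$; the only way the hypotheses can hold with $n\leq 2$ is $|a_1|=|a_2|=1$ with $n=2$, i.e.\ the borderline case $\pi=21$, which I would set aside.

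For the case $|a_k|\geq 3$, I would write $a_k=b_1\cdots b_\ell$ with $\ell\geq 3$ and $b_1<\cdots<b_\ell$, so that $a_k^m=b_2\cdots b_{\ell-1}$ is a nonempty increasing word and $\rev(a_k^e)=b_\ell b_1$. The point is that $a_k^m\rev(a_k^e)$ then begins with $b_2b_3$ (whether $\ell=3$ or $\ell\geq 4$), and since $b_2<b_3$ this word is not decreasing; hence neither is $a_1^m\cdots a_k^m\rev(a_k^e)$, of which it is a suffix. Applying the equality criterion in Lemma~\ref{LemSC321 PV} for the branch $|a_k|\geq 2$ then gives $g(\pi)<g(\SC_{321}(\pi))$, so $\pi$ is not a periodic point.

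For the case $|a_1|=|a_2|=1$, I would first note that $n\geq 3$ forces $k\geq 3$ and that $a_1,a_2$ being singleton ascending runs forces $\pi_1>\pi_2>\pi_3$. Since $a_1^m$ and $a_2^m$ are empty while $a_1^e=\pi_1$ and $a_2^e=\pi_2$, Lemma~\ref{LemSC321 map} gives $\SC_{321}(\pi)=a_3^m\cdots a_k^m\,\rev(a_k^e)\cdots\rev(a_3^e)\,\pi_2\,\pi_1$; and since $\rev(a_3^e)$ ends with the first entry of $a_3$, namely $\pi_3$, the last three entries of $\SC_{321}(\pi)$ are $\pi_3,\pi_2,\pi_1$, an increasing triple. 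Hence the last ascending run of $\SC_{321}(\pi)$ has length at least $3$, so by the case already handled $\SC_{321}(\pi)$ is not a periodic point; since the set of periodic points is closed under $\SC_{321}$, neither is $\pi$.

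The main obstacle I anticipate is precisely in the second case: one cannot conclude directly from a strict increase of $g$, because $g$ need not increase under a single application of $\SC_{321}$ there (for instance $\pi=4312$ satisfies $g(\pi)=g(\SC_{321}(\pi))=1$). The crucial maneuver is therefore to apply $\SC_{321}$ once in order to reduce to the first case, where $g$ genuinely jumps. Everything else is routine bookkeeping with the explicit descriptions furnished by Lemmas~\ref{LemSC321 map} and~\ref{LemSC321 PV}.
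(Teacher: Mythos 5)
Your proof is correct and matches the paper's argument essentially step for step: for $|a_k|\geq 3$ you use Lemma~\ref{LemSC321 PV} to force $g(\pi)<g(\SC_{321}(\pi))$ by showing $a_k^m\rev(a_k^e)$ is not decreasing, and for $|a_1|=|a_2|=1$ you apply $\SC_{321}$ once to reduce to the first case. Your observation that the corollary requires $n\geq 3$ (the paper omits this hypothesis here, though it appears in the subsequent Corollary~\ref{Lem dec3}, and indeed $21\in S_2$ has $|a_1|=|a_2|=1$ yet is a periodic point) and your explicit identification of $\pi_3\pi_2\pi_1$ as the terminal increasing run of $\SC_{321}(\pi)$ are minor sharpenings of the paper's terser exposition, but the underlying argument is the same.
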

\begin{proof}
If $|a_k|\geq 3$, then $a_k^m\rev(a_k^e)$ is not a decreasing sequence, so $g(\pi) < g(\SC_{321}(\pi))$ by Lemma~\ref{LemSC321 PV}. Hence, $\pi$ is not a periodic point of $\SC_{321}$. If $|a_1|=|a_2|=1$, then $\SC_{321}(\pi)$ is a permutation ending in an ascending run of length at least $3$, so it follows from the preceding argument that $\SC_{321}(\pi)$ is not a periodic point of $\SC_{321}$. Hence, $\pi$ is not a periodic point in this case either.  
\end{proof}

\begin{corollary} \label{Lem dec3}
Let $\pi = a_1 \cdots a_k\in S_n$ for some $n\geq 3$, where the $a_i$'s denote ascending runs. If $|a_1|\geq 3$ or $|a_{k-1}|=|a_{k}|=1$, then $\pi$ is not a periodic point of $\SC_{321}$.
\end{corollary}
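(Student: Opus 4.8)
The plan is to prove the two conditions in turn, reducing the second to the first. Throughout I use the explicit description of $\SC_{321}$ in Lemma~\ref{LemSC321 map}, the inequality $g(\tau)\le g(\SC_{321}(\tau))$ of Lemma~\ref{LemSC321 PV} together with its equality conditions, and the observation (recorded after Lemma~\ref{LemSC321 PV}) that $g(\tau)<g(\SC_{321}(\tau))$ prevents $\tau$ from being periodic. Since $S_n$ is finite, $g$ is constant on every periodic orbit, so each element $\tau$ of such an orbit, with ascending runs $a_1,\dots,a_k$, satisfies the relevant equality condition: the sequence $w(\tau):=a_1^m\cdots a_k^m\rev(a_k^e)$ is increasing if $|a_k|=1$ and decreasing if $|a_k|\ge2$.

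Suppose first, for contradiction, that $\pi=a_1\cdots a_k$ is a periodic point with $|a_1|\ge 3$. Then $w:=w(\pi)$ is monotone, and since its prefix $a_1^m$ is a nonempty increasing block, either $w$ is increasing, or $|a_1^m|=1$ (so $|a_1|=3$) and $w$ is decreasing. In the decreasing case, $g$-stability forces $|a_k|\ge2$; since every block $a_i^m$ of $w$ is increasing, we must have $|a_i|\le3$ for all $i$, and inspecting the terminal block $a_k^m\rev(a_k^e)$ forces $|a_k|=2$, so that $w$ is a strictly decreasing sequence of length at least $3$. By Lemma~\ref{LemSC321 map}, $\SC_{321}(\pi)=w\,\rev(a_{k-1}^e)\cdots\rev(a_1^e)$ begins with this strictly decreasing block, so its first two ascending runs both have length $1$; Corollary~\ref{Lem inc3} then says $\SC_{321}(\pi)$ is not periodic, a contradiction.

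It remains to handle the case $w$ increasing, which forces $|a_k|=1$. By Lemma~\ref{LemSC321 map}, $\SC_{321}(\pi)$ begins with the increasing block $w$ followed by the last entry of $a_{k-1}$ (which exceeds the last entry of $w$ because $a_{k-1}$ precedes $a_k$), so the leading ascending run of $\SC_{321}(\pi)$ has length at least $|w|+1\ge |a_1|\ge 3$; thus $\SC_{321}(\pi)$ is again periodic with leading run of length $\ge 3$. I would now argue around the orbit: since it is finite, either some orbit element has decreasing $w$ — in which case we finish as in the previous paragraph — or every orbit element has increasing $w$, the leading-run length is non-decreasing and hence constant, say $L_0$, and plugging this back into the length of $w$ forces every orbit element to have the rigid shape ``one run of length $L_0$, then runs of length at most $2$, then one run of length $1$''. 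A direct computation with Lemma~\ref{LemSC321 map} then shows that, on such permutations, $\SC_{321}$ strictly increases the number of ascending runs when there are at least three runs, and destroys the required monotonicity of $w$ when there are exactly two runs; the former contradicts the boundedness of the number of runs, the latter contradicts $g$-stability. This completes the proof that no periodic point has $|a_1|\ge 3$.

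Finally, suppose $|a_{k-1}|=|a_k|=1$, so that $\pi$ ends with $\pi_{n-2}>\pi_{n-1}>\pi_n$, and $a_{k-1}^m,a_k^m$ are empty with $\rev(a_k^e)=\pi_n$. If $g(\pi)<g(\SC_{321}(\pi))$ we are done. Otherwise, the equality condition (case $|a_k|=1$) forces $a_1^m\cdots a_{k-2}^m\,\pi_n$ to be increasing; since $\pi_n<\pi_{n-1}<\pi_{n-2}$ and $\pi_{n-2}$ is the first entry of $\rev(a_{k-2}^e)$, Lemma~\ref{LemSC321 map} shows that $\SC_{321}(\pi)$ begins with the increasing block $a_1^m\cdots a_{k-2}^m\,\pi_n\,\pi_{n-1}\,\pi_{n-2}$, of length at least $3$. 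By the first part, $\SC_{321}(\pi)$ is not periodic, so neither is $\pi$. The main obstacle is the ``$w$ increasing'' branch above: unlike in the proof of Corollary~\ref{Lem inc3}, a single step of $\SC_{321}$ need not land in the hypotheses of Corollary~\ref{Lem inc3}, so one must propagate constraints around the whole orbit and carry out a short case analysis of permutations with a long leading ascending run; everything else reduces to bookkeeping with Lemmas~\ref{LemSC321 map} and~\ref{LemSC321 PV}.
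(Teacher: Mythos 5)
Your argument correctly handles the decreasing-$w$ branch and reduces the $|a_{k-1}|=|a_k|=1$ case to the $|a_1|\geq 3$ case exactly as the paper does. But the ``increasing $w$'' branch contains a genuine gap. After reaching a periodic point $\SC_{321}(\pi)$ with leading ascending run of length $\geq 3$, you claim that on the ``rigid-shape'' permutations (first run of length $L_0$, then runs of length $\leq 2$, then a singleton), $\SC_{321}$ strictly increases the number of ascending runs whenever there are at least three runs. This is false. Take $\pi=134265$: its ascending runs have lengths $3,2,1$, so it has the rigid shape with $w(\pi)=35$ increasing, yet $\SC_{321}(\pi)=356241$ also has ascending runs of lengths $3,2,1$ --- the run count does not change. (What actually fails is the monotonicity of $w$: here $w(356241)=51$, which is neither increasing nor decreasing; but this is the obstruction you attribute only to the two-run case.) Since this ``direct computation'' is the crux of your around-the-orbit argument, the proposal as written does not close the $|a_1|\geq 3$ case. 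It can in fact be repaired much more simply: once you know $\SC_{321}(\pi)$ begins with an ascending run $a'_1$ of length $\geq 3$, note that $\SC_{321}(\pi)$ ends in $\rev(a_1^e)=\pi_{|a_1|}\pi_1$, so its last run is the singleton $\pi_1$; $g$-stability then forces $w(\SC_{321}(\pi))$ to be increasing, but $w(\SC_{321}(\pi))$ has length $\geq 2$, starts with the second entry of $a'_1$ (which exceeds $\pi_1$), and ends at $\pi_1$ --- a contradiction. No orbit propagation is needed.

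For comparison, the paper takes a genuinely different route to the $|a_1|\geq 3$ contradiction. It first establishes for $|a_1|\geq 2$ that $\SC_{321}(\pi)$ ends in the singleton $\pi_1$ and, by $g$-stability, that the sequence of first entries $\bigl(\SC_{321}^{2t}(\pi)_1\bigr)_{t\geq 1}$ is weakly decreasing, hence constant on a periodic orbit; this forces every odd iterate to have all ascending runs of length $\leq 2$, so $\SC_{321}^{2t}(\pi)=\rev\bigl(\SC_{321}^{2t-1}(\pi)\bigr)$. It then tracks the second entries $\bigl(\SC_{321}^{2t}(\pi)_2\bigr)_{t\geq 1}$, which are weakly increasing, hence constant, and derives the contradiction from $|a_1|\geq 3$ giving a strict increase at the first step. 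This entry-tracking argument is more robust than the shape analysis you attempt and delivers more (it applies under the weaker hypothesis $|a_1|\geq 2$).
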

\begin{proof}

Suppose $\pi$ is a periodic point of $\SC_{321}$ and $|a_1|\geq 2$. Let $\ell$ be the period of $\pi$ so that $\SC_{321}^\ell(\pi)=\pi$. Let $\pi_1$ denote the first entry of $\pi$. By Lemma~\ref{LemSC321 PV}, we must have $g(\pi)=g(\SC_{321}^t(\pi))$ for all $t\geq 1$. By Lemma~\ref{LemSC321 map}, the last two entries of $\SC_{321}(\pi)$ form the sequence $\rev(a_1^e)$; in particular, the last ascending run of $\SC_{321}(\pi)$ has length $1$ and is simply $\pi_1$. Since $\SC_{321}(\pi)$ ends in an ascending run of length $1$ and $g(\pi)=g(\SC_{321}(\pi))$, Lemma~\ref{LemSC321 PV} guarantees that $\SC_{321}^2(\pi)$ begins with an ascending run that contains $\pi_1$, so $\SC_{321}^2(\pi)_1 \leq \pi_1$. In fact, the first ascending run of $\SC_{321}^2(\pi)$ must actually contain $a_1^e$, so $\SC_{321}^2(\pi)$ is a periodic point of $\SC_{321}$ that starts with an ascending run of length at least $2$. Moreover, if $\SC_{321}(\pi)$ has an ascending run of length at least $3$, then the first ascending run of $\SC_{321}^2(\pi)$ actually begins with an entry that is smaller than $\pi_1$, implying that $\SC_{321}^2(\pi)_1 < \pi_1$. Repeating this argument shows that $(\SC_{321}^{2t}(\pi)_1)_{t\geq 1}$ is a weakly decreasing sequence and is nonconstant if $\SC_{321}^{2t-1}(\pi)$ contains an ascending run of length at least $3$ for some $t \geq 1$. Since $\SC_{321}^{k\ell}(\pi)=\pi$ for all integers $k\geq 1$, it follows that for every $t\geq 1$, the permutation $\SC_{321}^{2t-1}(\pi)$ does not have an ascending run of length at least $3$. This implies that $\SC_{321}^{2t}(\pi) = \rev(\SC_{321}^{2t-1}(\pi))$ for all $t \geq 1$. 

Now consider $\pi_2$, the second entry of $\pi$. As previously stated, $\SC_{321}(\pi)$ ends in $\rev(a_1^e)$, so the second-to-last entry of $\SC_{321}(\pi)$ is greater than or equal to $\pi_2$. This implies that the second entry of $\SC_{321}^2(\pi)$ is greater than or equal to $\pi_2$, as $\SC_{321}^2(\pi) = \rev(\SC_{321}(\pi))$. Repeating this argument shows that the sequence $(\SC_{321}^{2t}(\pi)_2)_{t\geq 1}$ is weakly increasing. As $\SC_{321}^{2k\ell}(\pi)=\pi$ for all $k\geq 1$, this sequence must actually be constant. If $|a_1|\geq 3$, then the second-to-last entry of $\SC_{321}(\pi)$, which is also the second entry of $\SC_{321}^2(\pi)$, is actually strictly greater than $\pi_2$. This is a contradiction. 

Now suppose $\pi$ is a periodic point and $|a_{k-1}|=|a_k|=1$. This means we must have $g(\pi)=g(\SC_{321}(\pi))$, so it follows from Lemma~\ref{LemSC321 PV} that $a_1^m\cdots a_k^m\rev(a_k^e)$ is an increasing sequence. Notice also that the first $3$ entries of the sequence $\rev(a_k^e)\rev(a_{k-1}^e)\rev(a_{k-2}^e)$ appear in increasing order (we must have $k\geq 3$ since $n\geq 3$ and $|a_{k-1}|=|a_k|=1$). This shows that $\SC_{321}(\pi)$ is a periodic point beginning with an ascending run of length at least $3$, which contradicts what we found in the previous two paragraphs. 
\end{proof}

The following proposition proves Theorem~\ref{thm: periodicmaintheorem} for $\sigma=321$, thereby completing the proof of the entire theorem. Indeed, we already proved the theorem for $\sigma\in\{132,213,231,312\}$, and we can use Lemma~\ref{LemComplement} to see that the next proposition implies Theorem~\ref{thm: periodicmaintheorem} for $\sigma=123$ as well. 

\begin{proposition}
The set of periodic points of $\SC_{321}:S_n\to S_n$ is $\Av_n(\underline{123}, \underline{321})$, the set of permutations in $S_n$ that avoid $123$ and $321$ consecutively. When $n\geq 2$, these points have period $2$.
\end{proposition}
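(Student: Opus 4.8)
The plan is to prove the two inclusions separately, using the structural results about $\SC_{321}$ already established.

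First I would show that every permutation in $\Av_n(\underline{123},\underline{321})$ is a periodic point of period dividing $2$. Suppose $\pi$ avoids $123$ and $321$ consecutively. Avoiding $\underline{123}$ means every ascending run has length at most $2$, so in the notation of Lemma~\ref{LemSC321 map} we have $a_i^m$ empty and $a_i^e=a_i$ for every $i$; hence $\SC_{321}(\pi)=\rev(a_k)\cdots\rev(a_1)=\rev(\pi)$. Avoiding $\underline{321}$ means every descending run of $\pi$ has length at most $2$, which is exactly the statement that $\rev(\pi)$ avoids $\underline{123}$; and of course $\rev(\pi)$ still avoids $\underline{321}$. So $\rev(\pi)\in\Av_n(\underline{123},\underline{321})$, and applying the same computation again gives $\SC_{321}^2(\pi)=\SC_{321}(\rev(\pi))=\rev(\rev(\pi))=\pi$. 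Thus $\pi$ is a periodic point, with period exactly $2$ when $n\geq 2$ since then $\pi\neq\rev(\pi)$.

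For the reverse inclusion, suppose $\pi=a_1\cdots a_k$ is a periodic point of $\SC_{321}$; I must show $\pi$ avoids both $\underline{123}$ and $\underline{321}$. The cases $n\leq 2$ are trivial, so assume $n\geq 3$. Since $\pi$ is periodic, so is every permutation in its forward orbit, and by Lemma~\ref{LemSC321 PV} we have $g(\SC_{321}^t(\pi))$ constant in $t$. Corollary~\ref{Lem inc3} gives $|a_k|\leq 2$ and rules out $|a_1|=|a_2|=1$; Corollary~\ref{Lem dec3} gives $|a_1|\leq 2$ and rules out $|a_{k-1}|=|a_k|=1$. The goal is to bootstrap these endpoint constraints into constraints on all ascending and descending runs. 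The key tool is that $\SC_{321}$ maps periodic points to periodic points, so the endpoint bounds from the two corollaries apply not just to $\pi$ but to every permutation in its orbit. Concretely, I would argue that because $|a_k|\leq 2$ for every permutation in the orbit, the analysis inside the proof of Corollary~\ref{Lem dec3} (which already shows $\SC_{321}^{2t}(\pi)=\rev(\SC_{321}^{2t-1}(\pi))$ for all $t$, i.e. no odd iterate has an ascending run of length $\geq 3$) combined with the complementary statement for even iterates forces $\SC_{321}(\pi)=\rev(\pi)$ and $\SC_{321}^2(\pi)=\pi$. Once we know $\SC_{321}(\pi)=\rev(\pi)$, Lemma~\ref{LemSC321 map} forces every $a_i^m$ to be empty, i.e.\ every ascending run of $\pi$ has length at most $2$, which is precisely avoidance of $\underline{123}$. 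Applying the same reasoning to the periodic point $\rev(\pi)$ shows its ascending runs have length at most $2$, i.e.\ $\pi$ avoids $\underline{321}$. Hence $\pi\in\Av_n(\underline{123},\underline{321})$.

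I expect the main obstacle to be cleanly establishing that a periodic point $\pi$ satisfies $\SC_{321}(\pi)=\rev(\pi)$ — equivalently, that no ascending run of $\pi$ has length $\geq 3$ — without redoing the delicate orbit analysis of Corollary~\ref{Lem dec3}. The subtlety is that Corollary~\ref{Lem inc3} only directly bounds $|a_k|$, not the interior runs, so one genuinely needs the dynamical argument: follow the first and second entries of the even iterates $\SC_{321}^{2t}(\pi)$, observe they are monotone in $t$ by Lemma~\ref{LemSC321 PV} and the description in Lemma~\ref{LemSC321 map}, and conclude they are constant by periodicity, which propagates the ``no long ascending run'' property through the whole orbit. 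I would organize the proof to reuse as much of the Corollary~\ref{Lem dec3} argument as possible, perhaps by noting that its conclusion ``$\SC_{321}^{2t}(\pi)=\rev(\SC_{321}^{2t-1}(\pi))$ for all $t\geq 1$'' already does most of the work, and then only needing the short additional step that if some ascending run of $\pi$ had length $\geq 3$ then $g$ would strictly increase under $\SC_{321}$, contradicting periodicity via Lemma~\ref{LemSC321 PV}.
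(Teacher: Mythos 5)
Your forward direction is fine and matches the paper exactly. The issue is in the reverse direction, specifically in the ``short additional step'' you hope to use to avoid redoing the orbit analysis of Corollary~\ref{Lem dec3}.

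You claim that ``if some ascending run of $\pi$ had length $\geq 3$ then $g$ would strictly increase under $\SC_{321}$, contradicting periodicity via Lemma~\ref{LemSC321 PV}.'' This is false. Lemma~\ref{LemSC321 PV} only forces $g$ to strictly increase when $a_1^m\cdots a_k^m\rev(a_k^e)$ fails the relevant monotonicity condition; an \emph{interior} ascending run of length $\geq 3$ makes $a_1^m\cdots a_k^m$ nonempty but does not by itself break monotonicity. For a concrete counterexample, take $\pi=24513$: the ascending runs are $a_1=245$ and $a_2=13$, so $a_1^m=4$ is nonempty, yet $a_1^m a_2^m\rev(a_2^e)=431$ is decreasing, $\SC_{321}(\pi)=43152$, and $g(\pi)=g(\SC_{321}(\pi))=2$. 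So you cannot shortcut the argument this way. Relatedly, your plan to lift Corollary~\ref{Lem dec3}'s internal conclusion $\SC_{321}^{2t}(\pi)=\rev(\SC_{321}^{2t-1}(\pi))$ depends on its hypothesis $|a_1|\geq 2$, and you never address the case $|a_1|=1$ for $\pi$ (or for $\SC_{321}(\pi)$); that hypothesis genuinely fails for some periodic points, so it cannot simply be assumed.

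The paper's proof does not go through ``$\SC_{321}(\pi)=\rev(\pi)$'' at all. After reducing to the case where $\pi$ is a supposed periodic point with a consecutive $123$ (so $a_1^m\cdots a_k^m$ is nonempty), it uses the $g$-equality from Lemma~\ref{LemSC321 PV} directly: if $|a_k|\geq 2$ the prefix $a_1^m\cdots a_k^m\rev(a_k^e)$ must be decreasing of length $\geq 3$, so $\SC_{321}(\pi)$ has $|a_1|=|a_2|=1$, contradicting Corollary~\ref{Lem inc3}; if $|a_k|=1$, the prefix (followed by $\pi_{n-1}>\pi_n$) is increasing of length $\geq 3$, so $\SC_{321}(\pi)$ starts with an ascending run of length $\geq 3$, contradicting Corollary~\ref{Lem dec3}. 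That is a one-step deduction from the lemma and the two corollaries, with no need to show $\SC_{321}(\pi)=\rev(\pi)$; I would recommend adopting that structure rather than patching the orbit-bootstrapping approach.
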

\begin{proof}
It is clear that if $\pi \in \Av(\underline{123}, \underline{321})$, then $\SC_{321}(\pi) = \rev(\pi) \in \Av(\underline{123}, \underline{321})$, so $\SC_{321}^2(\pi) = \pi$. Hence, $\pi$  is a periodic point of $\SC_{321}$, and the period is $2$ if $n\geq 2$. 

Suppose that $\pi \notin \Av(\underline{123}, \underline{321})$; we will show that $\pi$ is not a periodic point of $\SC_{321}$. If $\pi$ does not contain a consecutive $321$ pattern but does contain a consecutive $123$ pattern, then $\SC_{321}(\pi) = \rev(\pi)$ contains a consecutive $321$, so we may simply assume that $\pi \notin \Av(\underline{321})$ (otherwise, the same analysis applies to $\SC_{321}(\pi)$). Suppose instead that $\pi$ is a periodic point, and let $a_1,\ldots,a_k$ be its ascending runs. Note that we have $g(\pi) = g(\SC_{321}(\pi))$. Furthermore, the sequence $a_1^m\cdots a_k^m$ is nonempty because $\pi$ contains a consecutive $321$ pattern. It then follows from Lemma~\ref{LemSC321 map} that $\SC_{321}(\pi)$ is a periodic point of $\SC_{321}$ that begins with the sequence $a_1^m \cdots a_k^m \rev(a_k^e)$. If $|a_k|\geq 2$, then $a_1^m \cdots a_k^m \rev(a_k^e)$ has length at least $3$, and Lemma~\ref{LemSC321 PV} tells us that this sequence is decreasing.  This implies that the periodic point $\SC_{321}(\pi)$ starts with a decreasing sequence of length at least $3$, contradicting Corollary~\ref{Lem inc3}. Hence, we must have $|a_k|=1$. Consider $\pi_{n-1}$, which is last entry in $a_{k-1}$. We must have $\pi_{n-1}>\pi_n$, so it follows from Lemma~\ref{LemSC321 PV} that $a_1^m \cdots a_k^m \rev(a_k^e)\pi_{n-1}$ is an increasing sequence of length at least $3$. This shows that $\SC_{321}(\pi)$ is a periodic point whose first ascending run has length at least $3$, which contradicts Corollary~\ref{Lem dec3}. 
\end{proof}
\section{The Maps $\SC_{132}$ and $\SC_{312}$}\label{Sec:132and312}

In this section, we explore the maps $\SC_{132}$ and $\SC_{312}$ from both a sorting point of view and a dynamical point of view. We do not have much to say about $\SC_{312}$ from a sorting point of view because we were not able to enumerate the set $\Sort(\SC_{312})$. The initial values of the sequence $(|\Sort_n(\SC_{312})|)_{n\geq 0}$ are \[1, 1, 2, 5, 15, 50, 179, 675, 2649, 10734;\] this sequence appears to be new.
However, we \emph{will} be able to characterize and enumerate the permutations in $\Sort(\SC_{132})$. Specifically, we show that they are in bijection with Dyck paths and are thus enumerated by the Catalan numbers. 

From a dynamical point of view, we will be interested in the number of preimages of permutations under $\SC_{132}$ and $\SC_{312}$. In this setup, it suffices to focus our attention on $\SC_{132}$. Indeed, since $312=\comp(132)$, Lemma~\ref{LemComplement} implies that $|\SC_{312}^{-1}(\pi)|=|\SC_{132}^{-1}(\comp(\pi))|$ for all $\pi\in S_n$. Consequently, we will not need to discuss the map $\SC_{312}$ directly in this section. 

Let us begin with the set $\Sort(\SC_{132})$. Recall that a \emph{left-to-right minimum} of a permutation $\pi=\pi_1\cdots\pi_n$ is an entry $\pi_i$ such that $\pi_i<\pi_j$ for all $1\leq j<i$. For example, the left-to-right minima of $4572163$ are $4$, $2$, and $1$. 

\begin{proposition}\label{lm: 132 decon}
Let $\pi = a_1 \cdots a_k\in S_n$, where the subsequences $a_i$ are the ascending runs of $\pi$. For each $i$, let us write $a_i = m_i t_i$, where $m_i$ is the first entry of $a_i$ and $t_i$ is the (possibly empty) string obtained by removing $m_i$ from $a_i$. Then $\SC_{132}(\pi) \in \Av(231)$ if and only if each $m_i$ is a left-to-right minimum of $\pi$ and the concatenation $\rev(t_1)\cdots \rev(t_k)$ is a decreasing sequence. 
\end{proposition}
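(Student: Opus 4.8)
The plan is to first understand exactly how $\SC_{132}$ acts on an arbitrary permutation $\pi = a_1\cdots a_k$, in the same spirit as Lemma~\ref{LemSC321 map}. When we push the entries of an ascending run $a_i = m_i t_i$ into the consecutive-$132$-avoiding stack, the relevant obstruction is created when a new entry is larger than the top of the stack but smaller than the entry just below it (forming a consecutive $132$ read top-to-bottom as we add the new entry on top). I would track the stack contents carefully: within a single ascending run, each entry is larger than the previous one, so after the first entry $m_i$ all subsequent entries $t_i$ are pushed on while the top-to-bottom reading stays descending (no consecutive $132$), but the transition from $a_i$ to $a_{i+1}$ (a descent) is where pops happen. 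The first thing to establish is a clean normal-form description: $\SC_{132}(\pi)$ is obtained by, at each descent, popping off the portion of the current stack that would otherwise create a forbidden consecutive $132$, and at the end dumping the stack in reverse. I expect this analysis to show that the minima $m_i$ play a special role because an entry that is a left-to-right minimum can never trigger the $132$ restriction from below.

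Second, with the action of $\SC_{132}$ in hand, I would split into the two implications. For the "if" direction, assume every $m_i$ is a left-to-right minimum of $\pi$ and $\rev(t_1)\cdots\rev(t_k)$ is decreasing. Under these hypotheses I want to show the stack never needs to eject anything until the very end (or ejects only in a controlled way), so that $\SC_{132}(\pi)$ has an explicit form — plausibly $\rev(t_1)\cdots\rev(t_k)$ followed by $\rev(m_1\cdots m_k) = m_k\cdots m_1$ (the minima coming out last, in reverse of entry order, hence increasing since they are left-to-right minima), or some close variant. I would then check directly that this explicit output avoids $231$: the $t$-part is decreasing, the $m$-part is increasing, and every $m_j$ is smaller than every entry to its left in $\pi$ and hence smaller than everything in the $t$-part, which rules out a $231$ pattern straddling the two blocks. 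This is a finite case-check on where the three entries of a putative $231$ could lie.

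Third, for the "only if" direction I would prove the contrapositive: if some $m_i$ is not a left-to-right minimum, or if $\rev(t_1)\cdots\rev(t_k)$ is not decreasing, then $\SC_{132}(\pi)$ contains $231$. If $m_i$ is not a left-to-right minimum, there is an earlier entry $x < m_i$; I would argue that $x$ and $m_i$ (and a suitable third entry lying between them in the output, or $m_i$ together with two entries of $a_i$) survive in the output in an order realizing $231$ — here the key point is again that $m_i$, not being a minimum, behaves like an ordinary run-interior entry and $x$ gets trapped below it or emerges after it in a bad order. If instead all $m_i$ are left-to-right minima but $\rev(t_1)\cdots\rev(t_k)$ has an ascent, say some entry of $t_{i}$ (or the first entry of $t_{i+1}$) is larger than a later entry in the concatenation, I would exhibit the corresponding $231$ directly in the output using the normal form from the first step.

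The main obstacle I anticipate is the bookkeeping in the first step: precisely describing which entries get popped at each descent of $\pi$ and proving that under the stated hypotheses the output has the claimed explicit form. Consecutive-pattern stacks only require comparing with the top two entries, which helps, but one still has to rule out a cascade of pops propagating down the stack and argue inductively over the runs $a_1,\ldots,a_k$ that the invariant "the stack contents, read top to bottom, are exactly $\rev(t_i)\rev(t_{i-1})\cdots$ sitting above $m_i\cdots m_1$" (or the correct analogue) is maintained. Once that structural lemma is nailed down, both implications reduce to short pattern-checking arguments of the kind already used in Lemmas~\ref{LemSC321 map}–\ref{LemSC321 PV}.
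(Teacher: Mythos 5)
Your plan is essentially the paper's: prove that under $\SC_{132}$ the output has the explicit form $\rev(t_1)\cdots\rev(t_k)\,m_k\cdots m_1$ when the $m_i$ are left-to-right minima (from which the ``if'' direction is a one-line pattern check on a decreasing-then-increasing sequence), and for the ``only if'' direction exhibit a $231$ in the output when the hypotheses fail. Two things to fix before the details go through, though. First, your parenthetical statement of the $132$ obstruction is backwards: with new entry $x$ arriving on a stack whose top two entries (read top to bottom) are $y$ then $z$, the push is blocked precisely when $xyz$ realizes $132$, i.e.\ when $x<z<y$ — so $x$ must be \emph{smaller} than both $y$ and $z$, with the stack's top two forming a descent. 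You wrote $y<x<z$, which is a $213$ and never triggers a pop. Your subsequent narrative (all of $a_i$ enters uninterrupted; pops happen only when the small entry $m_{i+1}$ arrives) is consistent with the correct condition, so this reads as a slip rather than a misconception, but it would derail a written-out proof. Second, you propose to first derive a normal form for $\SC_{132}(\pi)$ on \emph{arbitrary} $\pi$. That is more than you need and is genuinely awkward here, because once some $m_{i+1}$ fails to be a left-to-right minimum the cascade of pops stops partway through $\rev(t_i)$ and the stack no longer has a clean description. The paper sidesteps this by inducting on the runs: assume $m_1,\ldots,m_i$ are left-to-right minima (so the stack is exactly $\rev(t_i)\,m_i\cdots m_1$ when $m_{i+1}$ arrives), and if $m_{i+1}$ is not one, locate the least $\ell$ with $b_\ell>m_{i+1}$ in $t_i=b_1\cdots b_j$ and observe that $m_{i+1},b_\ell,m_i$ appear in that order in the output and form a $231$. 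Adopting that inductive framing also repairs your stated stack invariant: after processing $a_1\cdots a_i$ the stack is $\rev(t_i)\,m_i\cdots m_1$, not $\rev(t_i)\rev(t_{i-1})\cdots$ (the earlier $\rev(t_j)$ blocks have already been popped out).
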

\begin{proof}
Let us first assume $\SC_{132}(\pi)\in\Av(231)$. We will prove by induction that the entries $m_1,\ldots,m_k$ are all left-to-right minima of $\pi$. Since $m_1$ is the first entry of $\pi$, it is trivially a left-to-right minimum. Now consider $1\leq i<k$, and suppose that $m_1,\ldots,m_i$ are all left-to-right minima. Suppose by way of contradiction that $m_{i+1}$ is not a left-to-right minimum. This implies that if we write $t_i = b_1 \cdots b_j$, where $j=|t_i|$, then $j\geq 1$ and $m_i<m_{i+1}<b_j$. Let $\ell$ be the smallest index such that $b_\ell > m_{i+1}$. When we send $\pi$ through the consecutive-$132$-avoiding stack, the consecutive subsequence $a_i$ will enter the stack consecutively, so $\rev(a_i)=b_j\cdots b_1m_i$ will be at the top of the stack when $m_{i+1}$ is next in line to be pushed into the stack. The entries $b_j,\ldots,b_{\ell+1}$ will be popped, and then $m_{i+1}$ will be pushed into the stack on top of $b_\ell$. Since $b_\ell>m_{i+1}$, the entry $m_{i+1}$ will not get popped out of the stack until after all of the entries of $\pi$ have entered the stack. It follows that $m_{i+1},b_\ell,m_i$ form an occurrence of the classical pattern $231$ in $\SC_{132}(\pi)$; this is our desired contradiction. It is straightforward to see that $\SC_{132}(a_1 \cdots a_k) = \rev(t_1) \cdots \rev(t_k) m_k \cdots m_1$, where $m_k = 1$. Since we are assuming $\SC_{132}(a_1\cdots a_k)$ avoids $231$, the sequence $\rev(t_1) \cdots \rev(t_k)$ must be decreasing. 

For the reverse direction, suppose that the entries $m_1,\ldots,m_k$ are left-to-right minima and that the sequence $\rev(t_1) \cdots \rev(t_k)$ is decreasing. As above, $\SC_{132}(\pi) = \rev(t_1) \cdots \rev(t_k)m_k \cdots m_1$, which is a decreasing sequence followed by an increasing sequence. Such a permutation necessarily avoids $231$.
\end{proof}

\begin{remark}
The proof of Proposition~\ref{lm: 132 decon} yields the somewhat-unexpected fact that if a permutation is in the image of $\SC_{132}$ and avoids the pattern $231$, then it also avoids $132$. 
\end{remark}

 Proposition~\ref{lm: 132 decon} tells us that each $\pi\in\Sort(\SC_{132})$ is determined by the first entries $m_i$ of its ascending runs as well as the lengths $|a_1|,\ldots,|a_k|$ of its ascending runs. Furthermore, these quantities must satisfy $n+1 - m_i \geq |a_1| + \cdots + |a_i|$ for all $i$. Indeed, the entries in $t_1$ must be the $|a_1|-1$ largest elements of $[n]\setminus\{m_1,\ldots,m_k\}$, the entries in $t_2$ must be the next $|a_2|-1$ largest elements, and so on. The condition that  $n+1 - m_i \geq |a_1| + \cdots + |a_i|$ for all $i$ guarantees that the $m_i$'s are left-to-right minima. 

A \emph{Dyck path of semilength} $n$ is a word over the alphabet $\{U,D\}$ that contains $n$ copies of each letter and has the additional property that each of its prefixes contains at least as many occurrences of $U$ as occurrences of $D$. Let ${\bf D}_n$ denote the set of Dyck paths of semilength $n$. It is well known that $|{\bf D}_n|=C_n=\frac{1}{n+1}{2n\choose n}$ is the $n$th Catalan number. Given $\pi\in S_n$ such that $\SC_{132}(\pi) \in \Av(231)$, write $\pi = a_1 \cdots a_k$ and $a_i = m_i t_i$ as in Proposition~\ref{lm: 132 decon}. Let $\Phi(\pi)$ be the Dyck path $U^{m_0-m_1}D^{|a_1|}U^{m_1-m_2}D^{|a_2|}\cdots U^{m_{k-1}-m_k}D^{|a_k|}$, where we make the convention $m_0=n+1$. 

\begin{example}
Let $\pi=589436712$, and note that $\SC_{132}(\pi)=987621345\in\Av(231)$. We have $a_1=589$, $a_2=4$, $a_3=367$, and $a_4=12$. The first entries of these ascending runs are $m_1=5$, $m_2=4$, $m_3=3$, and $m_4=1$. Therefore, $\Phi(\pi)=UUUUUDDDUDUDDDUUDD$.
\end{example}

\begin{theorem} \label{Thm 132bij 1}
The map $\Phi:\Sort_n(\SC_{132})\to{\bf D}_n$ is a bijection. Consequently, \[|\Sort_n(\SC_{132})|=C_n.\] 
\end{theorem}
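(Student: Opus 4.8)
The plan is to show that $\Phi$ is a bijection by exhibiting an explicit two-sided inverse, using the structural description of $\Sort_n(\SC_{132})$ provided by Proposition~\ref{lm: 132 decon} together with the paragraph following it. Recall that each $\pi\in\Sort_n(\SC_{132})$ is completely determined by the ascending-run first entries $m_1>m_2>\cdots>m_k$ (the left-to-right minima, with $m_k=1$) together with the ascending-run lengths $\lvert a_1\rvert,\ldots,\lvert a_k\rvert$, subject to the constraints $\lvert a_i\rvert\geq 1$, $\sum_{i}\lvert a_i\rvert=n$, and $n+1-m_i\geq \lvert a_1\rvert+\cdots+\lvert a_i\rvert$ for all $i$. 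So the whole theorem reduces to a bijection between such tuples $(m_1,\ldots,m_k;\lvert a_1\rvert,\ldots,\lvert a_k\rvert)$ and $\mathbf{D}_n$.

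First I would check that $\Phi(\pi)$ is a genuine Dyck path of semilength $n$. The total number of $U$'s is $\sum_{i=1}^k (m_{i-1}-m_i)=m_0-m_k=(n+1)-1=n$, and the total number of $D$'s is $\sum_{i=1}^k\lvert a_i\rvert=n$, so the letter counts are correct. For the prefix condition, after the block $U^{m_0-m_1}D^{\lvert a_1\rvert}\cdots U^{m_{i-1}-m_i}D^{\lvert a_i\rvert}$ the number of $U$'s minus the number of $D$'s is $(m_0-m_i)-(\lvert a_1\rvert+\cdots+\lvert a_i\rvert)=(n+1-m_i)-(\lvert a_1\rvert+\cdots+\lvert a_i\rvert)\geq 0$ by the constraint above; within a block $D^{\lvert a_i\rvert}$ the prefix sum only decreases from a nonnegative value down to that quantity, and within a block $U^{m_{i-1}-m_i}$ it only increases, so every prefix is balanced. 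Hence $\Phi$ is well-defined into $\mathbf{D}_n$.

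Next I would define the inverse map $\Psi:\mathbf{D}_n\to\Sort_n(\SC_{132})$. Given a Dyck path $P$, parse it uniquely as a sequence of maximal runs, alternating ascents and descents, of the form $U^{c_0}D^{d_1}U^{c_1}D^{d_2}\cdots U^{c_{k-1}}D^{d_k}$ where $c_0\geq 1$, all $d_i\geq 1$, and $c_1,\ldots,c_{k-1}\geq 1$ (a Dyck path begins with $U$ and ends with $D$, and between two descents there is at least one $U$). Set $\lvert a_i\rvert:=d_i$ and $m_i:=n+1-(c_0+c_1+\cdots+c_{i-1})$ for $i=1,\ldots,k$; in particular $m_0=n+1$ and $m_k=n+1-(c_0+\cdots+c_{k-1})=n+1-n=1$. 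The strict inequalities $c_0,\ldots,c_{k-1}\geq 1$ give $m_1>m_2>\cdots>m_k=1$, and the Dyck prefix condition at the end of each $U^{c_{i-1}}$-block gives exactly $n+1-m_i\geq \lvert a_1\rvert+\cdots+\lvert a_i\rvert$. By the remarks after Proposition~\ref{lm: 132 decon}, this tuple determines a unique permutation $\pi\in\Sort_n(\SC_{132})$: place $m_1,\ldots,m_k$ as the run-openers, fill $t_1$ with the $\lvert a_1\rvert-1$ largest remaining entries, $t_2$ with the next $\lvert a_2\rvert-1$, and so on. One then verifies directly from the definitions that $\Phi\circ\Psi=\mathrm{id}$ and $\Psi\circ\Phi=\mathrm{id}$: both compositions come down to the observation that the alternating-run decomposition of the word $U^{m_0-m_1}D^{\lvert a_1\rvert}\cdots$ recovers precisely the data $(m_i)$ and $(\lvert a_i\rvert)$, using $m_{i-1}>m_i$ to see the $U$-runs are nonempty for $i\geq 2$ and $\lvert a_i\rvert\geq 1$ to see the $D$-runs are nonempty. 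The conclusion $\lvert\Sort_n(\SC_{132})\rvert=\lvert\mathbf{D}_n\rvert=C_n$ then follows.

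The only mild subtlety — and the step I would be most careful about — is matching the run-structure of the Dyck path to the run-structure of the permutation at the boundary, specifically confirming that the $U$-runs $U^{m_{i-1}-m_i}$ for $i\geq 2$ are nonempty (equivalently $m_1,\ldots,m_k$ are \emph{strictly} decreasing, which holds because distinct ascending runs of a permutation have distinct opening entries and the $m_i$ are consecutive left-to-right minima) and that the leading $U$-run $U^{m_0-m_1}=U^{n+1-m_1}$ is nonempty (which holds since $m_1\leq n$, as $\pi$ has at least one ascending run of positive length forcing $m_1\neq n+1$, and in fact $m_1\leq n$ always). This guarantees the parsing of $P$ into alternating runs is unambiguous and invertible, so that $\Phi$ and $\Psi$ are genuinely mutually inverse rather than merely surjective/injective on the nose. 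Everything else is bookkeeping with the constraint $n+1-m_i\geq\lvert a_1\rvert+\cdots+\lvert a_i\rvert$, which translates verbatim into the Dyck-prefix inequality.
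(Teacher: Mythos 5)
Your proposal is correct and follows essentially the same approach as the paper: both establish that the data $(m_1,\ldots,m_k;|a_1|,\ldots,|a_k|)$ determines $\pi$ uniquely (giving injectivity), translate the constraint $n+1-m_i\geq|a_1|+\cdots+|a_i|$ into the Dyck prefix condition, and for surjectivity reconstruct a permutation from any Dyck path by parsing it into alternating runs. The only nit is a small misstatement in the $\Psi$ direction: the inequality $n+1-m_i\geq|a_1|+\cdots+|a_i|$ comes from the Dyck prefix ending at the \emph{end of the block} $D^{d_i}$, not at the end of $U^{c_{i-1}}$ as written — your forward-direction computation has the correct bookkeeping, so this is a typo rather than a gap.
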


\begin{proof}
Preserve the notation from above. The total number of occurrences of the letter $U$ in $\Phi(\pi)$ is $(m_0-m_1)+\cdots+(m_{k-1}-m_k)=m_0-m_k=(n+1)-1=n$, and the total number of occurrences of $D$ is $|a_1|+\cdots+|a_k|=n$. As mentioned above, we have $n+1-m_i\geq|a_1|+\cdots+|a_i|$ for all $i$; this guarantees that $\Phi(\pi)$ is indeed a Dyck path of semilength $n$. Moreover, by the previous discussion, $\Phi$ is injective. 

To prove surjectivity, suppose we are given a Dyck path $\Lambda=U^{\gamma_1}D^{\delta_1}U^{\gamma_2}D^{\delta_2}\cdots U^{\gamma_k}D^{\delta_k}$ for some positive integers $\gamma_1,\ldots,\gamma_k,\delta_1,\ldots,\delta_k$. Define $m_i=n+1-\gamma_1-\cdots-\gamma_i$. Using the fact that $\Lambda$ is a Dyck path, one can verify that there exists a permutation $\pi\in S_n$ with ascending runs $a_1,\ldots,a_k$ satisfying the conditions in Proposition~\ref{lm: 132 decon} with $a_i=m_it_i$ and $|a_i|=\delta_i$ for all $i$. This permutation is in $\Sort_n(\SC_{132})$ and satisfies $\Phi(\pi)=\Lambda$.  
\end{proof}

The next series of results determines the maximum number of preimages of permutations under $\SC_{132}$ and $\SC_{312}$. As mentioned above, it is only necessary to consider $\SC_{132}$.

\begin{lemma} \label{lm: swap}
Let $\pi\in S_n$. Suppose there exists an entry $i\in[n-1]$ that occurs before $i+1$ but not immediately before $i+1$ in $\pi$. Let $\pi'$ be the permutation obtained by swapping $i$ and $i+1$ in $\pi$. Then,
\begin{equation*}
    |\SC_{132}^{-1}(\pi)| \leq |\SC_{132}^{-1}(\pi')|.
\end{equation*}
\end{lemma}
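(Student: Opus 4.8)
The plan is to establish an injection from $\SC_{132}^{-1}(\pi)$ into $\SC_{132}^{-1}(\pi')$. The natural candidate is the map that takes a permutation $\rho$ with $\SC_{132}(\rho)=\pi$, swaps the entries $i$ and $i+1$ in $\rho$ to obtain a permutation $\rho'$, and checks that $\SC_{132}(\rho')=\pi'$. The key structural fact I would isolate first is a description of how the entries $i$ and $i+1$ travel through the consecutive-$132$-avoiding stack when processing $\rho$: since $i$ exits before $i+1$ in the output $\pi$ and they are not adjacent there, we are \emph{not} in the case where $i$ and $i+1$ occupy adjacent stack cells with $i$ on top and $i+1$ immediately below. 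I would argue that the relative position of $i$ versus $i+1$ in $\rho$ (which comes first) together with the behavior of the surrounding entries forces a specific scenario, and that swapping $i$ and $i+1$ changes neither which pushes and pops occur for the other entries nor the relative order of $i$ and $i+1$'s exits — it only relabels them. The point is that the consecutive-$132$ condition, when triggered by a window of three entries, is insensitive to interchanging two values that differ by $1$ as long as that interchange does not create or destroy a $132$ pattern in any relevant length-$3$ window; and the hypothesis that $i$ precedes but is not immediately adjacent to $i+1$ in the \emph{output} is designed precisely to rule out the windows where such a swap would matter.

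Concretely, the steps I would carry out are: (1) run the stack-sorting procedure on $\rho$ and track the moments at which $i$ and $i+1$ are pushed and popped; (2) use the fact that $i$ and $i+1$ are never simultaneously at the top two positions of the stack with $i$ above $i+1$ — I would derive this from the output condition, since otherwise a later entry $x$ with $i < x$ could be forced out producing the pattern that makes $i$ and $i+1$ adjacent, or more simply, $i$ would pop immediately before $i+1$; (3) conclude that in every length-$3$ window of the stack contents that ever arises during the processing of $\rho$, at most one of $i,i+1$ appears, \emph{or} they appear but in an order/position where comparing against $i$ versus $i+1$ yields the same push-or-pop decision; (4) deduce that replacing $\rho$ by $\rho'$ produces exactly the same sequence of pushes and pops, hence the same output permutation except with $i$ and $i+1$ interchanged, which is $\pi'$; (5) observe that $\rho \mapsto \rho'$ is an involution on permutations, hence injective, giving the inequality. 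I would also invoke Lemma~\ref{LemComplement}-style reasoning or the earlier remark only if needed to streamline, but I expect the bare stack analysis to suffice.

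The main obstacle I anticipate is step (2)–(3): pinning down exactly which stack configurations involving both $i$ and $i+1$ can occur, and verifying that the swap is harmless in each. The subtlety is that consecutive-pattern avoidance depends on the \emph{top} entries of the stack together with the incoming entry, so I need to rule out a configuration in which, say, $i+1$ is the incoming entry, $i$ is the top of the stack, and the entry below would complete a $132$ with $i$ but not with $i+1$ (or vice versa) — this is exactly the case $xi(i{+}1)$ versus $x(i{+}1)i$ as a window, and only the difference of $1$ between the two values could flip the decision. I would handle this by showing such a configuration forces $i$ to be popped right as $i+1$ enters, which would make $i$ immediately precede $i+1$ in the output (since neither $i$ nor $i+1$, being small relative to what sits below, can be overtaken afterward in a way that separates them) — contradicting the hypothesis. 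Making that "cannot be separated afterward" claim precise is the delicate point, and I would prove it by noting that once $i$ is popped it goes straight to the output, and $i+1$, if it then sits on top of the stack, can only be popped when a larger entry arrives, but any entry that triggers its pop cannot have been placed between $i$ and $i+1$ in the output. Once this is nailed down, the rest is bookkeeping.
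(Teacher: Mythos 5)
Your overall strategy is the same as the paper's: define $\rho \mapsto \rho'$ by swapping the values $i$ and $i+1$, show that $i$ and $i+1$ are never stacked consecutively while processing $\rho$, and then argue by a case analysis on the length-$3$ window $(a,b,c)$ (incoming entry, top of stack, second-from-top) that every push/pop decision for $\rho$ coincides with the corresponding decision for $\rho'$. That much is correct and matches the paper, including the use of the hypothesis to get the never-adjacent fact.

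The problem is your handling of the one genuinely delicate case. The configurations you single out --- ``$i+1$ incoming, $i$ at the top of the stack,'' and the windows $xi(i{+}1)$ and $x(i{+}1)i$ with $i,i+1$ in adjacent stack positions --- are not where the two decisions could actually differ. If $i$ and $i+1$ occupy positions $1$ and $2$ of the window, the triple can never realize $132$, since nothing lies strictly between consecutive integers; if they occupy positions $2$ and $3$, they are stacked consecutively, which your own never-adjacent observation has already ruled out. The window that can genuinely fire differently for $\rho$ and $\rho'$ is $(a,b,c)=(i,b,i+1)$, i.e., $i$ incoming, some $b$ on top, and $i+1$ second-from-top. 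If $b>i+1$, then $(i,b,i+1)$ realizes $132$ while its swap $(i+1,b,i)$ does not, so naively $\rho$ pops $b$ while $\rho'$ pushes. The paper's resolution is not that ``$i$ gets popped right as $i+1$ enters'' --- in this window $i$ is the incoming entry, not the one at risk of popping --- but that after popping $b$ the very next step would push $i$ directly onto $i+1$, creating the forbidden adjacency; hence $b>i+1$ cannot occur in this configuration, and both sides push. Your proposed resolution names the wrong window and the wrong mechanism, so as written the argument does not close, even though the framing and the contradiction-with-the-hypothesis idea are both in the right spirit.
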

\begin{proof}
Suppose $\SC_{132}(\tau) = \pi$, and let $\tau'$ be the permutation obtained by swapping $i$ and $i+1$ in $\tau$. We will prove that $\SC_{132}(\tau') = \pi'$. To do so, it suffices to show that the sequence of ``push'' and ``pop'' operations that is used to send $\tau$ through the consecutive-$132$-avoiding stack is exactly the same as the sequence used to send $\tau'$ through the stack. 

When sending $\tau$ through the consecutive-$132$-avoiding stack, the entries $i$ and $i+1$ are never consecutive entries in the stack. Indeed, if they were, then $i$ would sit on top of $i+1$ because $i$ appears before $i+1$ in the output permutation $\pi$. However, if $i$ sits on top of $i+1$, then these two entries will appear consecutively in the output $\pi$, contrary to our hypothesis. Now consider the sequences of ``push'' and ``pop'' operations that send $\tau$ and $\tau'$ through the stack. Note that in each sequence, the first two operations are necessarily pushes. Now suppose the first two sequences agree in their first $k$ operations. Suppose further that after the first $k$ operations, the stack-sorting procedure that is being applied to $\tau$ (respectively, $\tau'$) has $b$ (respectively, $b'$) as the top entry in the stack, has $c$ (respectively, $c'$) as the second-to-top entry in the stack, and has $a$ (respectively, $a'$) as the entry next in line to enter the stack. We show that the next operation is the same as well. 

If none of $a$, $b$, and $c$ are $i$ or $i+1$, then it must be that $a = a'$, $b = b'$, and $c = c'$, so the claim is trivially true. Next, if exactly one of $a$, $b$, and $c$ is $i$ or $i+1$, then $abc$ and $a'b'c'$ have the same relative order, so the claim is true is in this case as well. Finally, suppose exactly two of $a,b,c$ are $i$ or $i+1$. Because $i$ and $i+1$ cannot appear consecutively in the stack, we cannot have $\{b,c\}=\{i,i+1\}$. If $\{a,b\}=\{i,i+1\}$, then one of the two sorting procedures (sorting $\tau$ or $\tau'$) would have $i$ and $i+1$ appear consecutively in the stack, which is again impossible. Therefore, it must be that $(a, c) = (i, i+1)$ or $(a',c') = (i,i+1)$. The two cases are symmetric, so we may assume the former. Note that $abc$ cannot have the same relative order as $132$ as, otherwise, $i$ and $i+1$ would end up being consecutive in the stack once $b$ gets popped out and $c=i+1$ gets pushed in. Thus, the next operation for the $\tau$ case is to push $a$ into the stack. Likewise $a' = i+1$ and $c' = i$, so $a'b'c'$ cannot have the same relative order as $132$. The next operation in the $\tau'$ case is therefore to push $a'$ into the stack. By induction, the sequences of operations when sending $\tau$ and $\tau'$ through the consecutive-$132$-avoiding stack are identical.

Since the map $\tau \mapsto \tau'$ is one-to-one, this shows that $|\SC_{132}^{-1}(\pi)| \leq |\SC_{132}^{-1}(\pi')|$, as desired.
\end{proof}

\begin{corollary} \label{lm: aug1}
For every positive integer $n$, we have 
\[\max_{\pi\in\Av_n(132,213)}|\SC_{132}^{-1}(\pi)| = \max_{\pi\in S_n}|\SC_{132}^{-1}(\pi)|. \]
\end{corollary}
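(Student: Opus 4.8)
The plan is to deduce Corollary~\ref{lm: aug1} from Lemma~\ref{lm: swap} by an iterated-swapping argument. Since $\Av_n(132,213)\subseteq S_n$, the inequality $\max_{\pi\in\Av_n(132,213)}|\SC_{132}^{-1}(\pi)|\le\max_{\pi\in S_n}|\SC_{132}^{-1}(\pi)|$ is immediate, so the entire content is the reverse inequality. For this I would start from an arbitrary $\pi\in S_n$ and show that, by a finite sequence of swaps of the type allowed by Lemma~\ref{lm: swap} — each of which can only increase the number of $\SC_{132}$-preimages — one reaches a permutation lying in $\Av_n(132,213)$.

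The first step is a structural observation identifying exactly when Lemma~\ref{lm: swap} applies: a permutation $\pi\in S_n$ lies outside $\Av_n(132,213)$ if and only if there is some $i\in[n-1]$ such that $i$ occurs before $i+1$ in $\pi$ but not immediately before it. One direction is what matters, and it is easy: if such an $i$ exists and $x$ is any entry occurring strictly between $i$ and $i+1$ in position, then $x\neq i,i+1$, so either $x<i$, in which case $i\,x\,(i+1)$ is an occurrence of $213$, or $x>i+1$, in which case $i\,x\,(i+1)$ is an occurrence of $132$. Thus whenever $\pi\notin\Av_n(132,213)$ there is an index $i$ to which Lemma~\ref{lm: swap} applies. (The converse, which shows that $\Av_n(132,213)$ consists precisely of the $2^{n-1}$ permutations whose maximal runs of consecutive integers appearing consecutively and increasingly occur in decreasing order of value, is not strictly needed for the corollary but could be included for context.)

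The second step is to see that the swapping process terminates. The key point is that swapping $i$ and $i+1$ when $i$ occurs before $i+1$ increases the number of inversions by exactly $1$: the pair of positions holding $i$ and $i+1$ changes from a non-inversion to an inversion, while for every other entry $x$ the fact that $i$ and $i+1$ are consecutive values forces $x$ to compare the same way to $i$ and to $i+1$, so no other inversion is created or destroyed. Since the number of inversions of a permutation in $S_n$ is at most $\binom{n}{2}$, only finitely many such swaps can be performed. Hence, beginning from $\pi$ and repeatedly applying Lemma~\ref{lm: swap} wherever possible, we arrive after finitely many steps at some $\pi^{*}\in S_n$ admitting no further swap; by the first step, $\pi^{*}\in\Av_n(132,213)$, and Lemma~\ref{lm: swap} gives $|\SC_{132}^{-1}(\pi)|\le|\SC_{132}^{-1}(\pi^{*})|$. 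Taking the maximum over all $\pi\in S_n$ then yields the desired inequality.

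The only point requiring any care is termination — one must be certain the swaps genuinely make progress — and the strictly-increasing inversion count handles this cleanly; everything else is routine bookkeeping with Lemma~\ref{lm: swap}, so I do not anticipate a real obstacle.
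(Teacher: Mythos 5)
Your proof is correct and takes essentially the same route as the paper: apply Lemma~\ref{lm: swap} repeatedly, observe that each swap applies precisely when the permutation lies outside $\Av_n(132,213)$, and argue termination via a monotone inversion count. You even correctly note that each swap \emph{increases} the number of inversions by $1$, whereas the paper's parenthetical remark says it decreases---a small slip on the paper's part, though either direction of strict monotonicity yields termination since the inversion count is bounded between $0$ and $\binom{n}{2}$.
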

\begin{proof}
Suppose $\pi\in S_n$. If $\pi$ classically contains an occurrence of either $132$ or $213$, then there exists some $i\in[n-1]$ such that $i$ and $i+1$ are not consecutive in $\pi$ and $i$ occurs before $i+1$. Lemma~\ref{lm: swap} tells us that swapping these entries yields a permutation with at least as many preimages under $\SC_{132}$. By repeatedly performing such swaps, we must eventually reach a permutation in $\Av_n(132,213)$ (the sequence of swaps must terminate because each swap decreases the number of inversions in the permutation by $1$), and this permutation has at least as many preimages under $\SC_{132}$ as $\pi$.
\end{proof}

A permutation in $S_n$ is called \emph{reverse-layered} if the set of entries in each of its ascending runs forms an interval of consecutive integers. For example, $5674231$ is reverse-layered because the sets of entries in its ascending runs are $\{5,6,7\}$, $\{4\}$, $\{2,3\}$, and $\{1\}$. It is well known \cite{Linton} that $\Av_n(132,213)$ is precisely the set of reverse-layered permutations in $S_n$. Note that each reverse-layered permutation is uniquely determined by its set of descents, which we can encode via a word of length $n$ over the alphabet $\{A,D\}$ that starts with $A$. More precisely, the reverse-layered permutation $\pi=\pi_1\cdots\pi_n$ corresponds to the word $b_1\cdots b_n$, where $b_1=A$ and for $2\leq j\leq n$, we have $b_j=A$ if $\pi_{j-1}<\pi_j$ and $b_j=D$ if $\pi_{j-1}>\pi_j$. Let $d_\pi(k)$ denote the number of occurrences of $D$ in the subword $b_1\cdots b_k$. Let $a_\pi(k)$ denote the number of occurrences of $A$ in the word $b_{k+1}\cdots b_n$ that appear to the right of an occurrence of the letter $D$ in $b_{k+1}\cdots b_n$. For example, the reverse-layered permutation $\pi=78634512$ corresponds to the word $AADDAADA$. We have \[(d_\pi(0),\ldots,d_\pi(8))=(0,0,0,1,2,2,2,3,3)\quad\text{and}\quad(a_\pi(0),\ldots,a_\pi(8))=(3,3,3,3,1,1,1,0,0).\]  Using this notation, we can obtain a nice formula for $|\SC_{132}^{-1}(\pi)|$ when $\pi$ is reverse-layered. The reader may find it helpful to refer to Example~\ref{Example1} while reading the following proof.

\begin{theorem} \label{th: layer preim}
Let $\pi\in \Av_n(132,213)$ be a reverse-layered permutation in $S_n$, and preserve the definitions of $d_\pi(k)$ and $a_\pi(k)$ from above. We have 
\begin{equation*}
    |\SC_{132}^{-1}(\pi)| = \sum_{k = 0}^{n} {d_{\pi}(k) + a_{\pi}(k) \choose k}.
\end{equation*}
\end{theorem}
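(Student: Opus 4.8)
Fix a reverse-layered permutation $\pi = \pi_1\cdots\pi_n$, and let $b_1\cdots b_n \in \{A,D\}^n$ be its descent word (starting with $A$). The plan is to describe explicitly the set of preimages $\SC_{132}^{-1}(\pi)$ in terms of how the ascending runs of a preimage $\tau$ must be structured, and then count them. By Proposition~\ref{lm: 132 decon} (applied in the form where one does not insist the output avoids $231$), if $\SC_{132}(\tau) = \pi$ then the ascending runs of $\tau$ can be recovered from the output by the identity $\SC_{132}(a_1\cdots a_k) = \rev(t_1)\cdots\rev(t_k)m_k\cdots m_1$, where $a_i = m_i t_i$. So the first step is to reverse-engineer: given the target $\pi$, parametrize all permutations $\tau$ (equivalently, all ways of writing $\tau = a_1\cdots a_k$ with $a_i = m_i t_i$) whose image under $\SC_{132}$ is $\pi$. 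The key structural fact is that $\pi$ decomposes as the concatenation $\rev(t_1)\cdots\rev(t_k)$ followed by $m_k\cdots m_1$, where $m_k < \cdots < m_1$ reading in that block, i.e. $\pi$ ends in an increasing run $m_k m_{k-1}\cdots m_1$ of ``stack-bottoms''. Since $\pi$ is reverse-layered, its terminal ascending run is an interval $\{1,2,\ldots,r\}$ for some $r$, and the entries $m_i$ must be a subset of this terminal run together with possibly more — but actually the $m_i$ are exactly determined once we know where in $\pi$ the boundary between the $\rev(t_i)$ part and the $m_k\cdots m_1$ part falls, and which entries among the reversed-$t$ part get "cut" into separate runs.

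The cleaner approach is to index preimages by a choice, at each position $k \in \{0,1,\ldots,n\}$ of $\pi$, of how the first $k$ entries of $\pi$ are partitioned among the reversed blocks $\rev(t_i)$. More precisely, I would argue that a preimage $\tau$ of $\pi$ corresponds to: (i) choosing which suffix $m_k\cdots m_1$ of $\pi$ (necessarily a final increasing run, hence forced to be inside the reverse-layered structure in a controlled way) serves as the bottoms of the stack, and (ii) choosing, among the remaining prefix of $\pi$ of length $k = n - (\text{number of runs})$, a way to break it into the pieces $\rev(t_1), \ldots, \rev(t_k)$ subject to each $\rev(t_i)$ being a decreasing run whose standardization plays nicely. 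The combinatorial constraint that emerges — and this is where the binomial coefficient $\binom{d_\pi(k) + a_\pi(k)}{k}$ will come from — is that at the stage where the prefix has length $k$, the number of available "slots" to place the $k$ reversed entries is $d_\pi(k) + a_\pi(k)$: the $d_\pi(k)$ descents already consumed in $b_1\cdots b_k$ can each host extra entries, and the $a_\pi(k)$ ascents-following-descents in the remaining word $b_{k+1}\cdots b_n$ are the future run-breaks available. One chooses $k$ of these $d_\pi(k)+a_\pi(k)$ slots. Summing over $k$ from $0$ to $n$ (where $k$ ranges over the possible lengths of the reversed-$t$ portion) gives the claimed formula.

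The main obstacle — and the step I expect to require the most care — is proving that this parametrization is a genuine bijection: that every choice of $k$ slots really does yield a valid permutation $\tau \in S_n$ with $\SC_{132}(\tau) = \pi$, and conversely that every preimage arises exactly once. For the "forward" direction one must check that the candidate $\tau$ built from a slot-choice is actually a permutation (all entries distinct, forming $[n]$) and that running it through the consecutive-$132$-avoiding stack genuinely produces $\pi$ — this requires verifying that the $132$-avoidance restriction in the stack fires at precisely the right moments to eject the $t_i$ entries early and keep the $m_i$ entries until the end, which is the content already established in the proof of Proposition~\ref{lm: 132 decon} but must be re-examined because here $\SC_{132}(\tau)$ need not avoid $231$. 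For the "reverse" direction, given a preimage $\tau$, one must show its ascending-run structure $(m_i, t_i)$ maps canonically to a set of $k$ slots, where $k = |t_1| + \cdots + |t_k| = n - k_{\mathrm{runs}}$; the subtlety is confirming that distinct preimages give distinct slot-sets and that the slot-set always has the right ambient size $d_\pi(k)+a_\pi(k)$. I would set this up by carefully tracking, as one reads $\pi$ left to right, exactly which entries of $\pi$ are "$t$-entries" (appearing in some $\rev(t_i)$) versus "$m$-entries", using the reverse-layered structure to show the $m$-entries form a prescribed decreasing-then-increasing pattern and that inserting $t$-entries corresponds exactly to selecting descent-slots and future-ascent-slots. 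Once the bijection is pinned down, the count $\binom{d_\pi(k)+a_\pi(k)}{k}$ for each fixed $k$ and the final summation are routine.
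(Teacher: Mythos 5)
Your top-level plan — index preimages by the number $k$ of entries that exit the stack before all entries have entered, show the preimages with parameter $k$ number $\binom{d_\pi(k)+a_\pi(k)}{k}$, and sum — is exactly the paper's strategy. But there is a genuine gap at the first step. You invoke the identity $\SC_{132}(a_1\cdots a_k)=\rev(t_1)\cdots\rev(t_k)\,m_k\cdots m_1$ ``in the form where one does not insist the output avoids $231$.'' That identity is not a general fact; it is established in the proof of Proposition~\ref{lm: 132 decon} only under the hypothesis that each $m_i$ is a left-to-right minimum of $\tau$. Without that hypothesis it fails: for $\tau=14235$ (ascending runs $14,\,235$, so $m_2=2$ is not a left-to-right minimum), one gets $\SC_{132}(\tau)=53241$, while the claimed formula would give $45321$. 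You would therefore need to prove separately that every preimage of a reverse-layered permutation has all $m_i$ as left-to-right minima, a nontrivial structural claim you neither state nor prove. The paper sidesteps this entirely: it does not pass through the ascending-run decomposition of $\tau$ at all, but instead observes directly that the $k$ prematurely-popped entries are forced to be $\pi_1,\dots,\pi_k$ (popped in that order), while the remaining entries $\pi_{k+1},\dots,\pi_n$ must appear in $\tau$ in reverse order; a preimage is then an insertion of $\pi_1,\dots,\pi_k$ into $\pi_n\cdots\pi_{k+1}$ subject to explicit local conditions.

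The counting is also asserted rather than proved. You say ``one chooses $k$ of these $d_\pi(k)+a_\pi(k)$ slots,'' but the objects that actually arise are weakly decreasing sequences $j(1)\ge\cdots\ge j(k)$ drawn from the $a_\pi(k)$ admissible ascent positions, where strict inequality $j(i)>j(i+1)$ is forced at ascents of $\pi$ while equality is permitted only at descents. Counting these requires either a Vandermonde convolution $\sum_t\binom{d_\pi(k)}{t}\binom{a_\pi(k)}{k-t}$ (the paper's route) or a strictification bijection to $k$-subsets of a set of size $d_\pi(k)+a_\pi(k)$; neither appears in your sketch, and ``$k$ slots out of $d_\pi(k)+a_\pi(k)$'' is not a literal description of the configuration space. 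You candidly flag that the bijectivity needs to be ``pinned down,'' but the unproven pieces you list are precisely where all the work lies, and one of your stated premises is incorrect as stated. To repair the argument, drop the ascending-run formula for $\tau$ and instead argue directly about which entries are premature and where they may be inserted, then carry out the convolution count.
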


\begin{proof}
    Let $\SC_{132,k}^{-1}(\pi)$ denote the set of permutations $\tau\in\SC_{132}^{-1}(\pi)$ such that when $\tau$ is sent through the consecutive-$132$-avoiding stack, exactly $k$ entries get popped out of the stack before all entries have entered the stack. We will show that \[|\SC_{132,k}^{-1}(\pi)|={d_{\pi}(k) + a_{\pi}(k) \choose k},\] which will complete the proof. Note that $|\SC_{132,0}^{-1}(\pi)|=1$ because the only element of $\SC_{132,0}^{-1}(\pi)$ is $\rev(\pi)$. Indeed, $\SC_{132}$ does actually send $\rev(\pi)$ to $\pi$ because $\pi$ is reverse-layered (hence, $\rev(\pi)$ avoids $231$). Therefore, we may assume that $k\geq 1$ in what follows. 
    
    Suppose $\tau\in\SC_{132,k}^{-1}(\pi)$. When we send $\tau$ through the consecutive-$132$-avoiding stack, the $k$ entries popped before all entries have entered the stack must be exactly $\pi_1, \ldots, \pi_k$, and they must be popped in that order. The other entries $\pi_{k+1},\ldots,\pi_n$ get popped out of the stack later in this order, so they must appear in $\tau$ in the order $\pi_n,\ldots,\pi_{k+1}$. Thus, when constructing such a preimage $\tau$, we first insert each entry $\pi_i$ with $1\leq i\leq k$ between a pair of entries $\pi_{j(i)},\pi_{j(i)+1}$ with $k+1\leq j(i)\leq n$ and then reverse the entire permutation. When we do so, the following conditions must be satisfied:
   
    \begin{itemize}
        \item the string $\pi_{j(i)}\pi_i\pi_{j(i)+1}$ has the same relative order as $132$;
        \item $j(1)\geq\cdots\geq j(k)$;
        \item if $j(i)=j(i+1)$, then $\pi_i>\pi_{i+1}$ and $\pi_{i+1}\pi_i$ is a consecutive subsequence of $\tau$.
    \end{itemize}
    The first and third conditions ensure that the entries $\pi_1,\ldots,\pi_k$ actually get popped before all entries have entered the stack; the second and third conditions ensure that they are popped in the correct order.
    Moreover, each placement of the entries $\pi_1, \dots, \pi_k$ between consecutive entries in $\pi_{k+1}\cdots\pi_n$ such that the three conditions are satisfied yields a unique preimage $\tau\in\SC_{132,k}^{-1}(\pi)$. 
    
    To determine the number of valid placements, let $\pi_\ell$ be the last entry in the ascending run of $\pi$ that includes $\pi_k$. When we insert the entry $\pi_k$ between the entries $\pi_{j(k)},\pi_{j(k)+1}$, the first condition above guarantees that $j(k)\geq\ell+1$, so it follows from the second condition that $j(1)\geq\cdots\geq j(k)\geq \ell+1$. Thus, for each $i\in[k]$, the possible choices for the index $j(i)$ are precisely the ascents counted by $a_{\pi}(k)$.
    To satisfy the third condition, note that we can have $j(i)=j(i+1)$ only if $i$ is a descent of $\pi$. In summary, this proves that the number of ways to choose a preimages $\tau$ is equal to the number of ways to choose the indices $j(1),\ldots,j(k)$ from among the ascents counted by $a_\pi(k)$ such that $j(1)\geq\cdots\geq j(k)$, where the inequality $j(i)\geq j(i+1)$ is actually strict whenever $i$ is an ascent of $\pi$.  
    If we decide that there are precisely $t$ descents $i$ such that $j(i)=j(i+1)$, then there are $\binom{d_\pi(k)}{t}$ choices for these descents. There are then $\binom{k}{k-t}$ choices for the set $\{j(1),\ldots,j(k)\}$ of values taken by the indices $j(1),\ldots,j(k)$. These choices uniquely determine $j(1),\ldots,j(k)$.
    Together, this shows that
    \begin{equation*}
        |\SC_{132,k}^{-1}(\pi)|=\sum_{t=0}^{d_{\pi}(k)} {d_\pi(k)\choose t}{a_{\pi}(k) \choose k - t} = {d_{\pi}(k) + a_{\pi}(k) \choose k},
    \end{equation*}
    where the second equality is due to Vandermonde's Identity. 
\end{proof}

\begin{example}\label{Example1}
Let us take $\pi$ to be the permutation $12\,13\,11\,8\,9\,10\,6\,7\,5\,1\,2\,3\,4$. Suppose we want to construct a preimage $\tau\in\SC_{132,5}^{-1}(\pi)$. The ascents counted by $a_\pi(5)$ are $7,10,11,12$, which correspond to the following blanks: $6\underline{\hspace{.2cm}}7\,5\,1\underline{\hspace{.2cm}}2\underline{\hspace{.2cm}}3\underline{\hspace{.2cm}}4$. Therefore, $a_\pi(5)=4$. We need to insert the entries $12,13,11,8,9$ into some of these blanks. As an example illustrating the notation from the proof of Theorem~\ref{th: layer preim}, placing the entry $\pi_2=13$ in the blank between the entries $\pi_{10}=1$ and $\pi_{11}=2$ would correspond to setting $j(2)=10$. The indices in $\{1,\ldots,5\}$ that are descents of $\pi$ are $2$ and $3$ (meaning $d_\pi(5)=2$), so the proof of the theorem tells us that we must choose $j(1),\ldots,j(5)\in\{7,10,11,12\}$ such that $j(1)>j(2)\geq j(3)\geq j(4)>j(5)$. The number of ways to make this choice is \[\sum_{t=0}^2{2\choose t}{4\choose 5-t}={2+4\choose 5}.\] 
For one specific choice, we can take $j(1)=12$, $j(2)=j(3)=j(4)=10$, and $j(5)=7$. This corresponds to inserting the entries into the blanks to produce the permutation $6\,9\,7\,5\,1\,13\,11\,8\,2\,3\,12\,4$. Reversing this permutation produces $\tau=4\,12\,3\,2\,8\,11\,13\,1\,5\,7\,9\,6$, which is indeed in $\SC_{132,5}^{-1}(\pi)$. 
\end{example}

\begin{lemma} \label{lm: DtoA}
Let $\pi = \pi_1 \cdots \pi_n\in S_n\setminus\{123\cdots n\}$, and let $b = b_1 \cdots b_n$ be its corresponding word over the alphabet $\{A,D\}$ as above. Let $i$ be the smallest positive integer such that $b_{i+1}=A$, and let $a = a_{\pi}(i-1)$. Finally, let $\pi'$ be the permutation whose corresponding word over $\{A,D\}$ is the following modification of $b$: 
\[b'=\begin{cases} \underbrace{AD\cdots DD}_{i+1}b_{i+2} \cdots b_n & \mbox{if } a \geq i; \\  \underbrace{AD\cdots D}_ib_{i+2} \cdots b_nA & \mbox{if } a < i. \end{cases}\]
 Then $|\SC_{132}^{-1}(\pi)| \leq |\SC_{132}^{-1}(\pi')|$.
\end{lemma}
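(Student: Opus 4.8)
The plan is to use the formula from Theorem~\ref{th: layer preim} together with Corollary~\ref{lm: aug1}. Although $\pi$ and $\pi'$ as defined need not be reverse-layered, the key first step is to observe (via Corollary~\ref{lm: aug1}) that it suffices to compare the maxima of $|\SC_{132}^{-1}|$ over reverse-layered permutations; more precisely, I would argue that the statement reduces to the case where $\pi$ is reverse-layered, since the operation $b\mapsto b'$ is designed precisely to take a reverse-layered word (which necessarily has $b_1=A$ and is determined by its descent set) to another such word. So first I would spell out that $b'$ does start with $A$ and therefore corresponds to a genuine reverse-layered permutation $\pi'$, and that when $\pi$ is reverse-layered we may invoke Theorem~\ref{th: layer preim} for both $\pi$ and $\pi'$.

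Next I would compute $d_{\pi'}(k)+a_{\pi'}(k)$ in terms of $d_\pi(k)$ and $a_\pi(k)$, separately for each of the two cases. The point of choosing $i$ to be the smallest index with $b_{i+1}=A$ is that $b_2=\cdots=b_i=D$, so the prefix of $\pi$ up to position $i$ is a single long descending run after the first entry; the modification either lengthens that initial descending block by one more $D$ (moving an $A$ from somewhere to the right of all the $D$'s, when there are ``enough'' such $A$'s, i.e.\ $a\ge i$) or else shifts a $D$ to the far right, converting it to a trailing $A$ (when $a<i$). In either case I would track how $d_\pi(k)$ and $a_\pi(k)$ change: $d$ counts $D$'s in a prefix and $a$ counts post-descent $A$'s in a suffix, so moving a single letter changes each of these by at most $1$ at each $k$, and I expect that for every $k$ one has either $d_{\pi'}(k)+a_{\pi'}(k)\ge d_\pi(k)+a_\pi(k)$, or else a compensating shift in which index $k$ the relevant binomial coefficient is ``centered.'' The cleanest route is probably to show a termwise inequality $\binom{d_{\pi'}(k)+a_{\pi'}(k)}{k}\ge \binom{d_\pi(k)+a_\pi(k)}{k}$ for all $k$ (possibly after a reindexing of the sum), using the unimodality of binomial coefficients together with the inequality $d_\pi(k)+a_\pi(k)\le n$ (no entry can be both in the prefix-descent count and the suffix-ascent count simultaneously in a way that exceeds $n$).

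I expect the main obstacle to be the bookkeeping in the case $a<i$: here a $D$ is removed from position $i+1$ (it was actually an $A$ there — wait, $b_{i+1}=A$, so what is being moved is the block structure) and an $A$ is appended at the end, so the descending run count $d_{\pi'}(k)$ may \emph{decrease} by $1$ for $k$ in a certain range while $a_{\pi'}(k)$ increases, and one has to check these cancel or improve the binomial coefficient. I would handle this by carefully writing $d_{\pi'}(k)=d_\pi(k)-[\,k\ge \text{(position of the moved }D)\,]$ and $a_{\pi'}(k)=a_\pi(k)+[\,\cdots\,]$ and verifying the pointwise binomial inequality case by case on the value of $k$ relative to $i$, $\ell$ (the end of the run containing $\pi_i$), and $n$. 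The condition $a\ge i$ versus $a<i$ is exactly the dichotomy that guarantees $\binom{d_{\pi'}(k)+a_{\pi'}(k)}{k}$ is never pushed past the ``peak'' of the binomial row in a harmful direction, so once the two cases are separated the inequality should follow from monotonicity of $\binom{m}{k}$ in $m$ for fixed $k$, applied termwise, and then summing over $k$.
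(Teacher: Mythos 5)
Two issues, the second of which is fatal.

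First, a smaller point: the appeal to Corollary~\ref{lm: aug1} is unnecessary and slightly misplaced. The hypothesis that $\pi$ has a corresponding word $b$ over $\{A,D\}$ already means $\pi$ is reverse-layered, and $\pi'$ is \emph{defined} to be the reverse-layered permutation whose word is $b'$ (which does start with $A$ by construction). So the lemma is unconditionally about reverse-layered permutations and Theorem~\ref{th: layer preim} applies to both $\pi$ and $\pi'$ directly; there is no reduction step needed. Corollary~\ref{lm: aug1} enters only later, in Corollary~\ref{cor: 132maxpre}, to pass from arbitrary permutations to reverse-layered ones before this lemma is ever invoked.

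The serious gap is in the proposed ``termwise'' comparison. In the case $a \geq i$, you do \emph{not} have $\binom{d_{\pi'}(k)+a_{\pi'}(k)}{k} \geq \binom{d_\pi(k)+a_\pi(k)}{k}$ for $1 \leq k \leq i-1$: changing $b_{i+1}$ from $A$ to $D$ leaves $d_{\pi'}(k)=d_\pi(k)=k-1$ unchanged on that range but drops $a_{\pi'}(k)$ to $a_\pi(k)-1=a-1$, so those early terms strictly \emph{decrease}. And the reindexing you allude to (compare $\pi$'s term at $k$ with $\pi'$'s term at $k+1$) also fails in general: at $k=i-1$ with $a=i$, it would require $\binom{2i-2}{i}\geq\binom{2i-2}{i-1}$, which is false for $i\geq 2$. ``Unimodality of binomial coefficients'' works against you here, not for you. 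What actually makes the argument go through is the Hockey Stick Identity: one collapses the partial sums $\sum_{k=0}^{i-1}\binom{a+k-1}{k}=\binom{a+i-1}{i-1}$ (for $\pi$) and $\sum_{k=0}^{i}\binom{a+k-2}{k}=\binom{a+i-1}{i}$ (as a lower bound for $\pi'$), then compares the two collapsed coefficients, where the hypothesis $a\geq i$ gives exactly $\binom{a+i-1}{i}\geq\binom{a+i-1}{i-1}$. A similar collapse-then-compare is needed in the $a<i$ case. This identity is the crux of the proof and your plan does not contain it; ``monotonicity of $\binom{m}{k}$ in $m$ for fixed $k$, applied termwise, then summing'' cannot be made to work without it.
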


\begin{proof}
Suppose $a \geq i$. Then $b = \underbrace{AD\cdots DA}_{i+1}b_{i+2} \cdots b_n$ and $b' = \underbrace{AD\cdots DD}_{i+1}b_{i+2} \cdots b_n$, with permutations $\pi$ and $\pi'$ corresponding to $b$ and $b'$ respectively. Note that $a = a_{\pi}(i-1) = \cdots = a_{\pi}(1)$. Furthermore, $d_\pi(r)=r-1$ for all $1\leq r\leq i$. By Theorem \ref{th: layer preim} and the Hockey Stick Identity, 
\begin{align*}
    |\SC_{132}^{-1}(\pi)| &= 1 + {a \choose 1} + \cdots + {a+i-2 \choose i-1} +  \sum_{r = i}^{n} {d_{\pi}(r) + a_{\pi}(r) \choose r} \\
    &= 1 + {a+i-1 \choose i-1} +  \sum_{r = i}^{n} {d_{\pi}(r) + a_{\pi}(r) \choose r}.
\end{align*}
It is straightforward to see that $d_{\pi}(i) = d_{\pi}(i+1)$ and $a_{\pi}(i) = a_{\pi}(i+1)$, so by Pascal's Identity,
\begin{equation*}
    |\SC_{132}^{-1}(\pi)| = 1 + {a+i-1 \choose i-1}  + {d_{\pi}(i+1) + a_{\pi}(i+1) + 1 \choose i+1}+ \sum_{r = i+2}^{n} {d_{\pi}(r) + a_{\pi}(r) \choose r}.
\end{equation*}

As for the preimages of $\pi'$, notice that for $r \geq i + 1$, we have $d_{\pi'}(r) = d_{\pi}(r) + 1$ and $a_{\pi'}(r) = a_{\pi}(r)$. Furthermore, we have $d_{\pi'}(r)=d_\pi(r)=r-1$ and $a_{\pi'}(r)\geq a_\pi(r)-1$ for all $1\leq r\leq i$. Thus, 
\begin{align*}
    |\SC_{132}^{-1}(\pi')| \geq 1 + &{a-1 \choose 1} + \cdots + {a+i-3 \choose i-1} +  {a+i-2 \choose i} \\
    &+{d_{\pi}(i+1) + a_{\pi}(i+1) + 1 \choose i+1} +   \sum_{r = i+2}^{n} {d_{\pi}(r) + a_{\pi}(r) \choose r} \\
    = 1 + &{a+i-1 \choose i} +  {d_{\pi}(i+1) + a_{\pi}(i+1) + 1 \choose i+1} +   \sum_{r = i+2}^{n} {d_{\pi}(r) + a_{\pi}(r) \choose r}. 
\end{align*}
This proves that if $a = a_{\pi}(i-1) \geq i$, then $|\SC_{132}^{-1}(\pi)| \leq |\SC_{132}^{-1}(\pi')|$ as desired.

Now suppose $a < i$ so that $b =  \underbrace{AD\cdots DA}_{i+1}b_{i+2} \cdots b_n$ and $b' = \underbrace{AD\cdots D}_{i}b_{i+2} \cdots b_nA$, with permutations $\pi$ and $\pi'$ corresponding to $b$ and $b'$ respectively. We may suppose that $b_j = D$ for some $j \geq i+2$ as, otherwise, $b = b'$ and the inequality is trivially true. Thus, $a_{\pi'}(i) = a_{\pi}(i) + 1$ and  $d_{\pi'}(i) = d_{\pi}(i)$. It is straightforward to see that $d_{\pi'}(r) = d_{\pi}(r)$ and $a_{\pi'}(r) = a_{\pi}(r)$ for $1 \leq r \leq i-1$. Furthermore, $d_{\pi}(r) = d_{\pi'}(r-1)$ and $a_{\pi}(r) \leq a_{\pi'}(r-1)$ for $r \geq i+2$. Observe also that $\binom{d_{\pi'}(n)+a_{\pi'}(n)}{n}=\binom{d_{\pi'}(n)}{n}=0$. It follows that,
\begin{align*}
    |\SC_{132}^{-1}(\pi')| - |\SC_{132}^{-1}(\pi)| &= {d_{\pi}(i) + a_{\pi}(i) + 1 \choose i} - {d_{\pi}(i) + a_{\pi}(i) \choose i} - {d_{\pi}(i+1) + a_{\pi}(i+1) \choose i+1}  \\
    &\hspace{.5cm}+\sum_{r = i+1}^{n-1} {d_{\pi'}(r) + a_{\pi'}(r)  \choose r} - \sum_{r=i+2}^n{d_{\pi}(r) + a_{\pi}(r) \choose r} \\
    &\geq {d_{\pi}(i) + a_{\pi}(i) + 1 \choose i} - {d_{\pi}(i) + a_{\pi}(i) \choose i} - {d_{\pi}(i+1) + a_{\pi}(i+1) \choose i+1}  \\
    &\hspace{.5cm}+\sum_{r = i+2}^{n} \left[{d_{\pi}(r) + a_{\pi}(r)  \choose r-1} - {d_{\pi}(r) + a_{\pi}(r) \choose r}\right].
\end{align*}

Since $a < i$ by assumption, $a_{\pi}(r) < a < i < r$ for all $r \geq i+2$. Moreover, $d_{\pi}(r) < r$ trivially, so it follows that $2r > d_{\pi}(r) + a_{\pi}(r)$ and
\begin{equation*}
     {d_{\pi}(r) + a_{\pi}(r) \choose r-1} \geq  {d_{\pi}(r) + a_{\pi}(r) \choose r}
\end{equation*}
for all $r \geq i+2$. This shows that the summation in the above inequality is nonnegative.

Finally, we also have that $d_{\pi}(i) = d_{\pi}(i+1) = i-1$ and $a_{\pi}(i) = a_{\pi}(i+1)$, so by Pascal's Identity,
\begin{align*}
    |\SC_{132}^{-1}(\pi')| - |\SC_{132}^{-1}(\pi)| &\geq {d_{\pi}(i) + a_{\pi}(i) + 1 \choose i} - {d_{\pi}(i) + a_{\pi}(i) \choose i} - {d_{\pi}(i+1) + a_{\pi}(i+1) \choose i+1} \\
    &= {i + a_{\pi}(i) \choose i} -  {i + a_{\pi}(i) \choose i+1} \geq 0,
\end{align*}
as desired. The last inequality follows from the fact that $a_{\pi}(i) \leq a_{\pi}(i-1) = a < i$.
\end{proof}

If we start with a permutation $\pi\in\Av(132,213)$ whose corresponding word over $\{A,D\}$ is not of the form $AD \cdots D A \cdots A$ (i.e., the permutation is not simply a decreasing sequence followed by an increasing sequence), then the previous lemma allows us to alter the permutation and obtain a new permutation $\pi'$ with at least as many preimages under $\SC_{132}$ as $\pi$. This is stated in the following corollary. It is well known that a permutation can be written as a decreasing sequence followed by an increasing sequence if and only if it avoids $132$ and $231$ classically. Therefore, the set of permutations in $\Av(132,213)$ that can be written as a decreasing sequence followed by an increasing sequence is precisely $\Av(132,213,231)$. 

\begin{corollary} \label{cor: 132maxpre}
For every positive integer $n$, we have 
\[\max_{\pi\in\Av_n(132,213,231)}|\SC_{132}^{-1}(\pi)| = \max_{\pi\in S_n}|\SC_{132}^{-1}(\pi)|. \]
\end{corollary}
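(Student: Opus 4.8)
The plan is to combine the two previously established reductions—Corollary~\ref{lm: aug1} (which lets us restrict attention to $\Av_n(132,213)$, the reverse-layered permutations) and Lemma~\ref{lm: DtoA} (which lets us push any reverse-layered permutation toward one whose $\{A,D\}$-word has the shape $AD\cdots DA\cdots A$)—into a single statement. The key point is that Lemma~\ref{lm: DtoA} is engineered precisely so that, starting from a permutation $\pi\in\Av(132,213)$ whose word $b$ is \emph{not} of the form $AD\cdots DA\cdots A$, one produces a $\pi'$ that is still reverse-layered and has a word strictly closer to that form, with $|\SC_{132}^{-1}(\pi)|\le|\SC_{132}^{-1}(\pi')|$. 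Iterating the lemma must terminate, and the terminal permutations are exactly those in $\Av(132,213,231)$ by the discussion immediately preceding the corollary.

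First I would invoke Corollary~\ref{lm: aug1} to reduce to $\max_{\pi\in\Av_n(132,213)}|\SC_{132}^{-1}(\pi)|$. Then I would fix a reverse-layered $\pi$ achieving this maximum (or an arbitrary reverse-layered $\pi$, since we only need an inequality in one direction) and show that we may choose it inside $\Av_n(132,213,231)$. For this, I would suppose the $\{A,D\}$-word $b$ of $\pi$ is not of the form $\underbrace{AD\cdots D}_{}\,A\cdots A$; then there is a smallest index $i$ with $b_{i+1}=A$ that is followed later by a $D$, so Lemma~\ref{lm: DtoA} applies and gives a reverse-layered $\pi'$ with $|\SC_{132}^{-1}(\pi)|\le|\SC_{132}^{-1}(\pi')|$. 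I would then check that the modification $b\mapsto b'$ in Lemma~\ref{lm: DtoA} makes progress toward a word of the target shape—e.g.\ in the case $a\ge i$ it replaces a trailing $A$ of the initial $AD\cdots DA$ block by $D$, lengthening the leading run of $D$'s, and in the case $a<i$ it moves that $A$ to the very end—so that after finitely many applications we reach a permutation $\pi^\ast\in\Av_n(132,213,231)$ with $|\SC_{132}^{-1}(\pi)|\le|\SC_{132}^{-1}(\pi^\ast)|$. Since $\Av_n(132,213,231)\subseteq S_n$, the reverse inequality $\max_{\pi\in\Av_n(132,213,231)}|\SC_{132}^{-1}(\pi)|\le\max_{\pi\in S_n}|\SC_{132}^{-1}(\pi)|$ is automatic, and combining the two gives equality.

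The main obstacle is making the termination argument clean: I need an explicit monovariant on $\{A,D\}$-words (under the operation of Lemma~\ref{lm: DtoA}) that strictly decreases and is bounded below, and I need to verify that the process genuinely halts at a word of the form $AD\cdots DA\cdots A$ rather than cycling or getting stuck at some other configuration. A natural choice is to track, say, the position of the first $A$ that is followed by a later $D$ (this index weakly increases, and when it no longer exists we are done), or more robustly to induct on the number of ``descent blocks'' of the reverse-layered permutation together with a secondary statistic; I would phrase the statement so that each application of Lemma~\ref{lm: DtoA} either reduces the number of maximal runs in $b$ or, keeping that fixed, strictly shortens the first internal $A$-run, and argue that no word of the form $AD\cdots DA\cdots A$ admits a further application because for such a word no index $i$ as in the lemma exists (the unique maximal block of $A$'s is a suffix). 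That bookkeeping is the only non-routine part; everything else is a direct citation of the two results already proved.
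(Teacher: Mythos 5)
Your high-level plan is exactly the paper's: invoke Corollary~\ref{lm: aug1} to pass to $\Av_n(132,213)$, then iterate Lemma~\ref{lm: DtoA} and argue that the process terminates at a permutation in $\Av_n(132,213,231)$. The reverse inequality is, as you say, automatic. The only real content is the choice of monovariant, and that is where your proposal has a gap.

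Your first suggestion (position of the first $A$ that is followed by a later $D$) only weakly increases, as you note, so it does not by itself give termination. Your ``more robust'' fallback --- that each application of Lemma~\ref{lm: DtoA} either reduces the number of maximal runs of $b$ or keeps it fixed while shortening the first internal $A$-run --- is false. In the case $a<i$ the modification deletes the $A$ at position $i+1$ and appends an $A$ to the end of the word; if the word ends in $D$, this creates a brand-new singleton $A$-run and the number of maximal runs goes \emph{up}. Concretely, take $b=ADDDAAD$ (so $n=7$, $i=4$, $a=a_\pi(3)=2<i$): then $b'=ADDDADA$, and $b$ has $4$ maximal runs while $b'$ has $5$. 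So your proposed lexicographic monovariant can increase in its primary coordinate, and the argument does not close.

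The paper sidesteps this by choosing two statistics tailored to the target shape $AD\cdots DA\cdots A$: $f_1(w)$, the length of the maximal run of $D$'s starting in position $2$, and $f_2(w)$, the length of the maximal all-$A$ suffix. Lemma~\ref{lm: DtoA} increases $f_1$ strictly and fixes $f_2$ in the case $a\ge i$, and fixes or increases $f_1$ while increasing $f_2$ strictly in the case $a<i$ (using the assumption that some $b_j=D$ with $j\ge i+2$, which holds precisely when $\pi$ contains $231$). Since both are bounded by $n$, iteration terminates, and the terminal words are exactly those of the form $AD\cdots DA\cdots A$, i.e.\ permutations in $\Av_n(132,213,231)$. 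You identified the right obstacle but would need to replace your monovariant with one like $(f_1,f_2)$ for the proof to go through.
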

\begin{proof}
Let $\displaystyle M=\max_{\pi\in S_n}|\SC_{132}^{-1}(\pi)|$. By Corollary~\ref{lm: aug1}, there is a permutation $\pi\in \Av_n(132,213)$ such that $|\SC_{132}^{-1}(\pi)|=M$. Suppose $\pi$ contains the pattern $231$. We can alter $\pi$ using Lemma~\ref{lm: DtoA} in order to obtain a new permutation $\pi'$ with $|\SC_{132}^{-1}(\pi)|=M$. Let $b$ and $b'$ be the words over $\{A,D\}$ corresponding to $\pi$ and $\pi'$ respectively. Given a word $w$ over $\{A,D\}$, let $f_1(w)$ denote the length of the longest consecutive string of $D$'s in $w$ starting in the second position of $w$, and let $f_2(w)$ be the length of the longest suffix  of $w$ that uses only the letter $A$. Because $\pi$ contains $231$, it is straightforward to check (using the definition of $b'$ given in the proof of Lemma~\ref{lm: DtoA}) that $f_1(b)\leq f_1(b')$ and $f_2(b)\leq f_2(b')$, where at least one of these inequalities must be strict. This shows that if we repeatedly use Lemma~\ref{lm: DtoA} to alter permutations, we must eventually reach a permutation $\tau\in\Av_n(132,213,231)$ with $|\SC_{132}^{-1}(\tau)|=M$. 
\end{proof}

\begin{theorem} The maximum number of preimages under $\SC_{132}$ or $\SC_{312}$ that a permutation in $S_n$ can have is given by
\[\max_{\pi\in S_n}|\SC_{132}^{-1}(\pi)|=\max_{\pi\in S_n}|\SC_{312}^{-1}(\pi)|={n-1\choose\left\lfloor\frac{n-1}{2}\right\rfloor}.\]
\end{theorem}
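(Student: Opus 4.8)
The plan is to chain the reduction in Corollary~\ref{cor: 132maxpre} together with the exact count in Theorem~\ref{th: layer preim}. The map $\SC_{312}$ needs no separate work: because $312=\comp(132)$, Lemma~\ref{LemComplement} gives $|\SC_{312}^{-1}(\pi)|=|\SC_{132}^{-1}(\comp(\pi))|$ for every $\pi\in S_n$, and since $\comp$ is a bijection of $S_n$ onto itself, $\max_{\pi\in S_n}|\SC_{312}^{-1}(\pi)|=\max_{\pi\in S_n}|\SC_{132}^{-1}(\pi)|$. So it suffices to prove that $\max_{\pi\in S_n}|\SC_{132}^{-1}(\pi)|=\binom{n-1}{\lfloor(n-1)/2\rfloor}$.

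By Corollary~\ref{cor: 132maxpre}, this maximum is attained somewhere in $\Av_n(132,213,231)$, so I would first pin down that set. A permutation lies in $\Av(132,213,231)$ exactly when it is reverse-layered (by the cited description of $\Av(132,213)$) and can be written as a strictly decreasing run followed by a strictly increasing run (by the cited description of $\Av(132,231)$). A reverse-layered permutation is determined by its descent set, and it is decreasing-then-increasing precisely when that descent set is $\{1,2,\dots,m\}$ for some $m\in\{0,1,\dots,n-1\}$; equivalently, its word over $\{A,D\}$ is $A\,D^{m}\,A^{\,n-1-m}$. For each such $m$ there is exactly one permutation, namely $\rho_m:=n\,(n-1)\cdots(n-m+1)\,1\,2\cdots(n-m)$. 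Hence $\max_{\pi\in S_n}|\SC_{132}^{-1}(\pi)|=\max_{0\le m\le n-1}|\SC_{132}^{-1}(\rho_m)|$.

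The computational heart is to show $|\SC_{132}^{-1}(\rho_m)|=\binom{n-1}{m}$. Reading $d_{\rho_m}$ and $a_{\rho_m}$ off the word $A\,D^{m}\,A^{\,n-1-m}$, one finds $d_{\rho_m}(k)=\min(k-1,m)$ for $k\ge 1$ (and $d_{\rho_m}(0)=0$), while $a_{\rho_m}(k)=n-1-m$ for $0\le k\le m$ and $a_{\rho_m}(k)=0$ for $k\ge m+1$; in particular every term with index $k>m$ in the formula of Theorem~\ref{th: layer preim} vanishes (the index $k=m+1$ contributes $\binom{m}{m+1}=0$). Plugging in and applying the hockey-stick identity gives
\[|\SC_{132}^{-1}(\rho_m)|=\sum_{k=0}^{m}\binom{n-2-m+k}{k}=\binom{n-1}{m}\]
(the edge case $m=n-1$, where $\rho_{n-1}$ is the decreasing permutation, can be checked directly to give $\binom{n-1}{n-1}=1$). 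Since $\max_{0\le m\le n-1}\binom{n-1}{m}=\binom{n-1}{\lfloor(n-1)/2\rfloor}$, this finishes the proof.

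None of the steps is difficult, as the conceptual reduction is already carried out in Corollary~\ref{cor: 132maxpre}; the part demanding the most care is the bookkeeping for $d_{\rho_m}$ and $a_{\rho_m}$---especially the boundary index $k=m+1$---and the verification, from the cited descriptions of $\Av(132,213)$ and $\Av(132,231)$, that the words $A\,D^{m}\,A^{\,n-1-m}$ for $0\le m\le n-1$ really do enumerate all of $\Av_n(132,213,231)$.
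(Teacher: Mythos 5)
Your proof is correct and follows essentially the same route as the paper's: reduce to $\SC_{132}$ by complementation, invoke Corollary~\ref{cor: 132maxpre} to restrict to $\Av_n(132,213,231)$, identify these permutations as $n(n-1)\cdots(n-m+1)12\cdots(n-m)$ with $\{A,D\}$-word $AD^mA^{n-1-m}$, plug into Theorem~\ref{th: layer preim}, and evaluate via the Hockey Stick Identity to get $\binom{n-1}{m}$, maximized at $m=\lfloor(n-1)/2\rfloor$. The only cosmetic difference is that you parameterize by $m$ and enumerate all of $\Av_n(132,213,231)$ rather than working directly with the maximizing permutation, and you explicitly flag the $k=0$ and $m=n-1$ boundary cases, which the paper glosses over.
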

\begin{proof}
By complementing, it suffices to consider only the map $\SC_{132}$. Let $\displaystyle M=\max_{\pi\in S_n}|\SC_{132}^{-1}(\pi)|$. By Corollary \ref{cor: 132maxpre}, there exists $\pi\in \Av_n(132,213,231)$ with $|\SC_{132}^{-1}(\pi)|=M$. Since $\pi$ avoids $132$, $213$, and $231$, it must be of the form \[n(n-1)\cdots(n-r+1)123\cdots (n-r).\] The word over $\{A,D\}$ corresponding to $\pi$ is $A\underbrace{D\cdots D}_{r}\underbrace{A\cdots A}_{n-r-1}$. Observe that $d_\pi(k)=\min(k-1,r)$ for all $1\leq k\leq n$. Furthermore, we have $a_\pi(k)=n-r-1$ if $1\leq k\leq r$ and $a_\pi(k)=0$ if $r+1\leq k\leq n$. By Theorem~\ref{th: layer preim} and the Hockey Stick Identity, we have
\begin{equation*}
    M=|\SC_{132}^{-1}(\pi)| = 1+{n-r-1\choose 1}+{n-r\choose 2}+\cdots+{n-2\choose r}={n-1 \choose r} \leq {n-1\choose\left\lfloor\frac{n-1}{2}\right\rfloor}.
\end{equation*}
Moreover, equality is achieved when $r = \left\lfloor\frac{n-1}{2}\right\rfloor$; this completes the proof. 
\end{proof}

\section{The Maps $\SC_{213}$ and $\SC_{231}$}\label{Sec:213and231}

We begin this section with a brief discussion of the sets $\Sort(\SC_{213})$ and $\Sort(\SC_{231})$. 
The initial terms of the sequence $(|\Sort_n(\SC_{213})|)_{n\geq 0}$ are \[1, 1, 2, 5, 15, 50, 180, 686, 2731, 11254.\] This sequence appears to be new.
On the other hand, the first $10$ terms of $(|\Sort_n(\SC_{213})|)_{n\geq 0}$ are \[ 1, 1, 2, 6, 21, 79, 311, 1265, 5275, 22431.\] These numbers match the initial terms in the OEIS sequence A033321, which is the binomial transform of Fine's sequence. Fine's sequence $(F_k)_{k\geq 0}$ can be defined via its generating function \[\sum_{k\geq 0}F_kx^k=\frac{1-\sqrt{1-4x}}{3-\sqrt{1-4x}}.\]

\begin{conjecture}\label{Conj:231Sortable}
For each positive integer $n$, we have \[|\Sort_n(\SC_{231})|=\sum_{k=0}^n\binom{n}{k}F_{k+1}.\] 
\end{conjecture}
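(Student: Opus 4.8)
The plan is to reduce the conjecture to a generating-function identity and then to establish that identity through a structural analysis of $\Sort_n(\SC_{231})=\SC_{231}^{-1}(\Av(231))$, in analogy with the treatment of $\Sort_n(\SC_{132})$ in Proposition~\ref{lm: 132 decon}. Write $\mathcal F(x)=\sum_{k\ge 0}F_k x^k=\dfrac{1-\sqrt{1-4x}}{3-\sqrt{1-4x}}$, so that in particular $F_0=0$. Interchanging the order of summation and using $\sum_{n\ge k}\binom{n}{k}x^n=x^k/(1-x)^{k+1}$ gives
\[
\sum_{n\ge 0}\left(\sum_{k=0}^{n}\binom{n}{k}F_{k+1}\right)x^n
=\frac{1}{1-x}\sum_{k\ge 0}F_{k+1}\left(\frac{x}{1-x}\right)^k
=\frac{1}{x}\,\mathcal F\!\left(\frac{x}{1-x}\right),
\]
the last equality using $F_0=0$. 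So it suffices to show that $\sum_{n\ge 0}|\Sort_n(\SC_{231})|\,x^n=\dfrac{1}{x}\,\mathcal F\!\left(\dfrac{x}{1-x}\right)$.

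The key structural feature of the consecutive-$231$-avoiding stack is that it pops at most once per push: when $\pi_j$ is about to be pushed onto a stack whose top two entries, read from the top, are $x$ and $y$, a pop occurs precisely when $y<\pi_j<x$, and then exactly one entry (namely $x$) is ejected, because the new top $y$ satisfies $y<\pi_j$. It follows that the top of the stack just before $\pi_j$ is processed is always $\pi_{j-1}$, and $\pi_{j-1}$ is ejected early exactly when $\pi_{j-1}>\pi_j$ and $\pi_j$ exceeds the entry lying directly beneath $\pi_{j-1}$ at that instant. Starting from this, I would describe $\SC_{231}(\pi)$ in terms of a suitable run decomposition of $\pi$ together with a bookkeeping of these ``barrier'' entries, and then determine exactly when the output avoids $231$ classically. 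I anticipate that the resulting characterization of $\Sort(\SC_{231})$ will be recursive --- obtained by peeling off a controlled prefix, or the largest entry, in the spirit of the proof of Proposition~\ref{Prop:Periodic132} --- with a compatibility condition relating the removed part to the rest of the permutation.

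For the enumeration step, the shape of the answer suggests decomposing each $\pi\in\Sort_n(\SC_{231})$ into a ``core'' of some size $k$, governed by a Fine-type (hill-free Dyck path) recursion and accounting for the factor $F_{k+1}$, together with $n-k$ further entries whose values are forced once a $k$-element subset of $[n]$ is chosen (this subset contributing the factor $\binom{n}{k}$). Such a decomposition would be the analogue of the fact, observed just after Proposition~\ref{lm: 132 decon}, that a permutation in $\Sort(\SC_{132})$ is determined by the first entries of its ascending runs and the run lengths, with the strings $t_i$ forced. I would then either convert the recursive characterization into a functional equation for $G(x)=\sum_{n\ge 0}|\Sort_n(\SC_{231})|\,x^n$ and check that it coincides with $\frac{1}{x}\mathcal F(x/(1-x))$, or construct an explicit bijection from $\Sort_n(\SC_{231})$ onto the family enumerated by the binomial transform of Fine's sequence (OEIS A033321).

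The main obstacle is the structural analysis in the first step. Unlike the consecutive-$132$- and consecutive-$321$-avoiding stacks, whose actions admit the clean ascending-run closed forms of Proposition~\ref{lm: 132 decon} and Lemma~\ref{LemSC321 map}, the consecutive-$231$-avoiding stack permits entries to remain stuck and to interleave in a way that depends on entries pushed far earlier, so there is no one-line formula for $\SC_{231}(\pi)$. Making the barrier-entry bookkeeping precise enough to decide membership in $\Av(231)$, and then extracting the hill-free Fine structure from it, is where the genuine difficulty lies; this is presumably why the statement is offered only as a conjecture.
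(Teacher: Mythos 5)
The statement you are addressing is labeled a \emph{conjecture} in the paper, and the paper offers no proof of it; the authors verified it only numerically for small $n$. So there is no proof in the paper to compare your argument against. What you have submitted is also not a proof but a roadmap, and you are candid about this yourself.

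That said, the two concrete pieces of mathematics in your proposal are correct and worth recording. The generating-function reduction is right: using $F_0=0$ and $\sum_{n\geq k}\binom{n}{k}x^n = x^k/(1-x)^{k+1}$, the conjecture is indeed equivalent to $\sum_{n\geq 0}|\Sort_n(\SC_{231})|x^n = \frac{1}{x}\mathcal F\!\left(\frac{x}{1-x}\right)$, which is the standard ``binomial transform as substitution'' identity. Your observation that a consecutive-$231$-avoiding stack pops at most once per push is also correct, and worth spelling out since it is the sharpest thing in the proposal: if the top two stack entries are $x,y$ (top to bottom) and the next input is $\pi_j$, a pop is triggered only when $y<\pi_j<x$, and after $x$ is ejected the new top $y$ satisfies $y<\pi_j$, which blocks any further pop. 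By induction the top of the stack when $\pi_j$ arrives is always $\pi_{j-1}$. This is a genuine simplification unavailable for the classical $231$-avoiding stack, and it is the same observation that underlies the ``premature entries'' argument the paper uses in Section~\ref{Sec:213and231} to compute $\max_\pi|\SC_{231}^{-1}(\pi)|$.

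The gap is everything beyond that. You propose to describe $\SC_{231}(\pi)$ via a run decomposition with ``barrier'' bookkeeping, characterize membership of $\SC_{231}(\pi)$ in $\Av(231)$, peel off a core of size $k$ contributing $F_{k+1}$ with $\binom{n}{k}$ ways to place it, and convert this to a functional equation --- but none of these steps is carried out or even precisely formulated. In particular, there is no stated candidate for what the $k$-element ``core'' is, no stated characterization of $\Sort(\SC_{231})$ analogous to Proposition~\ref{lm: 132 decon} or Theorem~\ref{ThmSort123}, and no indication of where the hill-free (Fine) restriction would come from. Since the difficulty is concentrated exactly there, the proposal does not bring the conjecture appreciably closer to a proof; it recasts the target in generating-function language and isolates a useful structural fact, but the heart of the argument remains to be invented.
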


Our main focus in this section will be on the maximum number of preimages a permutation in $S_n$ can have under either $\SC_{213}$ or $\SC_{231}$.

\begin{theorem}
For every $n\geq 2$, we have
\begin{equation*}
    \max_{\pi\in S_n}|\SC_{213}^{-1}(\pi)|=\max_{\pi\in S_n}|\SC_{231}^{-1}(\pi)| = 2^{n-2}.
\end{equation*} 
\end{theorem}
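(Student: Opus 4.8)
The plan is to reduce, via complementation (Lemma~\ref{LemComplement}, which gives $|\SC_{231}^{-1}(\pi)|=|\SC_{213}^{-1}(\comp(\pi))|$), to studying only $\SC_{231}$, and then to mimic the strategy already developed for $\SC_{132}$ in Section~\ref{Sec:132and312}: first show that a permutation with the maximum number of preimages can be taken from a small, well-structured family, then compute the preimage counts explicitly on that family and optimize. First I would prove an analogue of Lemma~\ref{lm: swap} for $\SC_{231}$: if $i$ occurs before but not immediately before $i+1$ in $\pi$, then swapping $i$ and $i+1$ does not decrease the number of $\SC_{231}$-preimages. The proof should again run by showing that the sequences of push/pop operations used to sort $\tau$ and its swapped version $\tau'$ through the consecutive-$231$-avoiding stack coincide, using the observation that $i$ and $i+1$ can never sit as consecutive stack entries (since $i$ precedes $i+1$ in the output and two adjacent stack entries always exit adjacently). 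As in Corollary~\ref{lm: aug1}, repeatedly applying such swaps (each reducing the inversion count) shows that some $\pi\in\Av_n(132,213)$ — i.e.\ some reverse-layered permutation — attains the maximum.

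Next I would restrict further. For reverse-layered $\pi$ one can describe $\SC_{231}^{-1}(\pi)$ combinatorially exactly as in Theorem~\ref{th: layer preim}: a preimage $\tau$ is obtained by choosing, for each of the first $k$ entries $\pi_1,\dots,\pi_k$ that get popped before all entries enter the stack, a slot in the remaining suffix into which to insert it subject to the consecutive-$231$-restriction and the correct pop order, then reversing. Carrying out this bookkeeping should yield a clean binomial-sum formula for $|\SC_{231}^{-1}(\pi)|$ in terms of the $\{A,D\}$-word of $\pi$, and then an analogue of Lemma~\ref{lm: DtoA} / Corollary~\ref{cor: 132maxpre} shows the maximizer may be taken in $\Av_n(132,213,231)$, i.e.\ of the form $n(n-1)\cdots(n-r+1)\,1\,2\,\cdots(n-r)$. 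Finally, for each such $\pi$ one computes $|\SC_{231}^{-1}(\pi)|$ directly as a function of $r$ and maximizes over $r\in\{0,\dots,n-1\}$; the claim is that this maximum is $2^{n-2}$ (so one expects the relevant sum to telescope, via Hockey Stick / Pascal identities, into something like $2^{\,r}$ or $\sum_{j}\binom{r}{j}$, maximized at one of the extreme or middle values of $r$). One should also exhibit an explicit permutation in $S_n$ with exactly $2^{n-2}$ preimages to confirm the bound is attained, and handle the small case $n=2$ (where $S_2$ itself has $2^0=1$ preimage for each element) separately.

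The main obstacle I expect is getting the exact preimage-count formula for reverse-layered permutations under $\SC_{231}$, and then under the further-reduced family. The consecutive-$231$ restriction is genuinely different from the consecutive-$132$ one: when $\pi_i$ is pushed on top of a stack whose top two entries are $c$ (second-from-top) and $b$ (top), the offending pattern is $\pi_i$ together with $b$ and $c$ forming a consecutive $231$ (with $b$ in the role of the top of the ``$3$''), so the arithmetic constraints on which slots $\pi_i$ may be inserted into, and when two of the $\pi_i$'s may share a slot, will differ from the $132$ case — I would need to recheck carefully which ascents/descents of $\pi$ count, possibly obtaining a formula like $\sum_k \binom{\alpha_\pi(k)}{k}$ or $\sum_k\binom{\alpha_\pi(k)+\beta_\pi(k)}{k}$ for suitably redefined statistics. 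Once the formula is in hand, the optimization and the final identity evaluation should be routine applications of Vandermonde, Pascal, and Hockey Stick, as in Section~\ref{Sec:132and312}; the secondary obstacle is simply keeping the edge cases (empty strings $t_i$, the $k=0$ preimage $\rev(\pi)$, permutations whose $\{A,D\}$-word is already of the reduced shape) straight throughout.
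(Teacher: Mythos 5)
Your outline could probably be pushed through eventually, but as written it has a real gap and a misstatement, and — more importantly — it misses a much shorter direct argument, which is the one the paper actually uses.

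The gap is the one you flag yourself: you never produce the analogue of Theorem~\ref{th: layer preim} for $\SC_{231}$, nor the analogue of Lemma~\ref{lm: DtoA} needed to push a maximizer from $\Av_n(132,213)$ down to $\Av_n(132,213,231)$, and without both the ``optimize over $r$'' step is not a proof. Note also that the answer is qualitatively different from the $\SC_{132}$ case: $2^{n-2}$ exceeds $\binom{n-1}{\lfloor(n-1)/2\rfloor}$, and the maximizer turns out to be the extreme permutation $n(n-1)\cdots 1$, not a middle value of $r$; so the binomial expression you are hoping for cannot look like $\binom{n-1}{r}$, and your guess ``$\sum_j\binom{r}{j}$, maximized at an extreme'' is off by a factor of $2$. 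The misstatement is ``two adjacent stack entries always exit adjacently'': that is false in general (after the upper one is popped, a fresh input entry may be pushed before the lower one exits). What is true, and what the proof of Lemma~\ref{lm: swap} actually uses, is specific to consecutive integers: if $i$ sits directly on top of $i+1$, no incoming $a$ can make $(a,i,i+1)$ a consecutive $231$ (that would require $i+1<a<i$), so $i$ can only leave during the final emptying, with $i+1$ immediately after. You would need to state it this way for the swap lemma to survive.

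The paper avoids all of this machinery. Call $\tau_j$ \emph{premature} if it is popped before the last entry of $\tau$ enters the stack. A short argument (again exploiting the shape of $231$: a premature $\tau_j$ must in fact be popped before $\tau_{j+1}$ even enters) shows the premature entries are popped in the order they occur in $\tau$, so $\SC_{231}(\tau)$ equals the premature entries in $\tau$-order followed by the reverse of the remaining entries. Hence a preimage $\tau$ of $\pi$ is completely determined by $\pi$ together with the set $J=\{j_1<\cdots<j_k\}$ of positions of its premature entries: one must have $\tau_{j_i}=\pi_i$, while the remaining positions of $\tau$, read left to right, carry $\pi_n,\ldots,\pi_{k+1}$. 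Since $\tau_1$ (alone at the bottom, and $231$ having length $3$) and $\tau_n$ can never be premature, $J\subseteq\{2,\ldots,n-1\}$, giving $|\SC_{231}^{-1}(\pi)|\le 2^{n-2}$ for \emph{every} $\pi\in S_n$ at once, with no reduction to reverse-layered permutations, no preimage formula, and no optimization. One then checks directly that all $2^{n-2}$ subsets are realized when $\pi=n(n-1)\cdots1$. Compare: your route requires three nontrivial lemmas plus a closed-form sum; the paper's is a single counting observation plus one explicit family of preimages.
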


\begin{proof}
The equality $\displaystyle\max_{\pi\in S_n}|\SC_{213}^{-1}(\pi)|=\max_{\pi\in S_n}|\SC_{231}^{-1}(\pi)|$ follows directly from Lemma~\ref{LemComplement}, so we can focus on the map $\SC_{231}$. Now consider the process of sending a permutation $\tau = \tau_1 \cdots \tau_n\in S_n$ through a consecutive-$231$-avoiding stack. Say that the entry $\tau_j$ is \emph{premature} if it is popped out of the stack before all of the entries of $\tau$ enter the stack. The effect of sending $\tau$ through the stack is to shift all of the premature entries to the left while preserving their relative order and reverse the remaining (non-premature) entries.
More formally, if $\{\tau_{j_1}, \cdots, \tau_{j_k}\}$ are the premature entries of $\tau$, then $\SC_{231}(\tau) = \tau_{j_1} \cdots \tau_{j_k} \rev(\tau \setminus \{\tau_{j_1}, \cdots, \tau_{j_k}\})$, where $\tau \setminus \{\tau_{j_1}, \cdots, \tau_{j_k}\}$ is simply the sequence $\tau$ with the terms in $\{\tau_{j_1}, \cdots, \tau_{j_k}\}$ removed.

Now fix $\pi=\pi_1\cdots\pi_n\in S_n$, and consider a preimage $\tau$ of $\pi$ under $\SC_{231}$. Suppose that $\tau_{j_1},\ldots,\tau_{j_k}$ are the premature entries of $\tau$ (with $j_1<\cdots<j_k$). By the previous paragraph, $\tau$ is obtained by inserting the entries $\pi_1,\ldots,\pi_k$ into $\rev(\pi_{k+1}\cdots\pi_n)$ so that the entry in position $j_i$ of $\tau$ is $\pi_i$ for all $i\in[k]$. This shows that $\tau$ is uniquely determined by specifying $\pi$ and the set of indices $\{j_1,\ldots,j_k\}$. Finally, notice that the first and last entries of a permutation cannot be premature. Therefore, the positions of the premature entries of $\tau$ must form a subset of $\{2, \ldots, n-1\}$. It follows that $|\SC_{231}^{-1}(\pi)|\leq 2^{n-2}$

To complete the proof, it suffices to show that $|\SC_{231}^{-1}(n (n-1) \cdots 1)|\geq 2^{n-2}$. For each set $J=\{j_1,\ldots,j_k\}\subseteq\{2,\ldots,n-1\}$ (with $j_1<\cdots<j_k$), let $\tau^{J}$ be the permutation obtained by inserting the entries $n,n-1,\ldots,n-k+1$ into the sequence $123\cdots (n-k)$ so that for each $i\in[k]$, the entry $n+1-i$ is in position $j_i$. For example, if $n=8$ and $J=\{2,3,5\}$, then $\tau^J=18726345$. It is straightforward to check that $\SC_{231}(\tau^J)=n(n-1)\cdots 1$ and that $\tau^J\neq\tau^{J'}$ whenever $J\neq J'$. Consequently, $|\SC_{231}^{-1}(n(n-1)\cdots 1)|\geq 2^{n-2}$. 
\end{proof}

\section{The Maps $\SC_{123}$ and $\SC_{321}$}\label{Sec:123and321}

Our goal in this section is to enumerate the sets $\Sort(\SC_{123})$ and $\Sort(\SC_{321})$, thereby proving analogues of two more of the main theorems from \cite{Cerbai}. Our enumeration of $\Sort(\SC_{321})$ generalizes to $\Sort(\SC_{k(k-1) \cdots 1})$, so we focus on $\Sort(\SC_{k(k-1) \cdots 1})$ first instead.

It follows immediately from Lemma~\ref{Lem1} that $\Sort(\SC_{k(k-1) \cdots 1}) = \Av(132, \underline{12\cdots k})$ for $k\geq 3$. In Proposition~3.2 of \cite{HeinHuang} it was shown that this set is enumerated by the so-called \emph{generalized Motzkin numbers} $M_{k-1,n}$, which can be defined via the formula \[M_{k-1,n}=\frac{1}{n+1}\sum_{j=0}^{\left\lfloor n/k\right\rfloor}(-1)^j\binom{n+1}{j}\binom{2n-jk}{n}.\] For $k= 3,4,5,$ and $6$, these numbers are given by OEIS sequences A001006, A036765, A036766, and A036767 respectively \cite{OEIS}.

\begin{theorem}[\!\!\cite{HeinHuang}]
For $k\geq 3$, the sets $\Sort_n(\SC_{k(k-1) \cdots 1}) = \Av_n(132, \underline{12\cdots k})$ are enumerated by the generalized Motzkin numbers. More precisely, \[|\Sort_n(\SC_{k(k-1)\cdots 1})|=M_{k-1,n}.\]
\end{theorem}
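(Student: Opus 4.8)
The statement has two parts: the identification $\Sort_n(\SC_{k(k-1)\cdots 1}) = \Av_n(132,\underline{12\cdots k})$ and the enumeration of the latter set. The first part is immediate. For $\sigma = k(k-1)\cdots 1$ with $k\geq 3$ we have $\widehat\sigma = (k-1)\,k\,(k-2)(k-3)\cdots 1$, whose first three entries $(k-1),k,(k-2)$ form a classical occurrence of $231$; since $\rev(\sigma) = 12\cdots k$, Lemma~\ref{Lem1} yields $\Sort(\SC_\sigma) = \Av(132,\underline{12\cdots k})$. So the content is the enumeration of $\Av_n(132,\underline{12\cdots k})$; one may simply invoke Proposition~3.2 of \cite{HeinHuang}, but I would give the following self-contained argument, which also reproduces the displayed closed form.

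A permutation consecutively avoids $12\cdots k$ precisely when each of its ascending runs has length at most $k-1$. Let $A = A_k(x) = \sum_{n\geq 0}|\Av_n(132,\underline{12\cdots k})|\,x^n$, and for $0\leq j\leq k-1$ let $A^{(j)}$ be the generating function for the members of the class whose last ascending run has length exactly $j$ (with $A^{(0)}$ counting the empty permutation). I would combine the first-return decomposition of $132$-avoiding permutations with this refinement: writing a nonempty $\pi$ as $\pi = L\,n\,R$ with $n$ the largest entry, avoidance of $132$ forces every entry of $L$ to exceed every entry of $R$, so $L$ and $R$ standardize to arbitrary $132$-avoiders. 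The key structural point is that the ascending runs of $\pi$ are those of $L$, with the last one lengthened by appending $n$, followed by those of $R$; hence $\pi$ lies in the class if and only if the runs of $R$ and all but the last run of $L$ have length $\leq k-1$ while the last run of $L$ has length $\leq k-2$. Tracking the length of the last run of $\pi$ across the cases $R=\emptyset$ and $R\neq\emptyset$, I expect the system
\[A^{(1)} = x + x(1+B)A^{(1)}, \qquad A^{(\ell)} = xA^{(\ell-1)} + x(1+B)A^{(\ell)}\quad (2\leq\ell\leq k-1),\]
where $B = \sum_{j=1}^{k-2}A^{(j)}$ is the generating function for nonempty class members whose last run has length $\leq k-2$. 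Solving this telescoping system gives $A^{(\ell)} = (xA)^\ell$ for $1\leq\ell\leq k-1$, and therefore
\[A = \sum_{j=0}^{k-1}(xA)^j = \frac{1-(xA)^k}{1-xA}.\]

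To extract coefficients, put $v = xA$, so that $v = x\,\Phi(v)$ with $\Phi(v) = (1-v^k)/(1-v)$. Lagrange inversion gives
\[[x^n]A = [x^{n+1}]v = \frac{1}{n+1}\,[v^n]\left(\frac{1-v^k}{1-v}\right)^{n+1} = \frac{1}{n+1}\sum_{j=0}^{\lfloor n/k\rfloor}(-1)^j\binom{n+1}{j}\binom{2n-jk}{n},\]
the last equality coming from expanding $(1-v^k)^{n+1}(1-v)^{-(n+1)}$ and using $\binom{2n-jk}{n-jk} = \binom{2n-jk}{n}$. This is exactly $M_{k-1,n}$, so $|\Sort_n(\SC_{k(k-1)\cdots 1})| = M_{k-1,n}$.

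The main obstacle is the bookkeeping in the first-return decomposition — pinning down exactly how the maximum entry merges into the last ascending run of the left factor and how the ``length of the last ascending run'' statistic propagates through the cases — since a miscount there changes the functional equation. Once $A = \sum_{j=0}^{k-1}(xA)^j$ is established, the remainder is routine Lagrange inversion. A bijective alternative would send $\Av_n(132,\underline{12\cdots k})$ to the set of plane trees with $n$ edges in which every vertex has at most $k-1$ children (equivalently, $(k-1)$-Motzkin paths), constructed directly from the same decomposition; this avoids the coefficient extraction but relies on the identical run-merging analysis.
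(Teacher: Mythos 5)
Your proof is correct, and the first half matches the paper's argument. Specifically, the identification $\Sort(\SC_{k(k-1)\cdots 1}) = \Av(132, \underline{12\cdots k})$ via Lemma~\ref{Lem1} (with $\widehat\sigma = (k-1)\,k\,(k-2)\cdots 1$ containing $231$ and $\rev(\sigma)=12\cdots k$) is exactly what the paper does. For the enumeration, the paper simply defers to Proposition~3.2 of \cite{HeinHuang}, whereas you supply a self-contained generating-function argument; I checked it through. In the first-return decomposition $\pi = LnR$, the ascending runs of $\pi$ are all but the last run of $L$, then the last run of $L$ extended by $n$, then the runs of $R$, so indeed $L$ must lie in the class with last run of length at most $k-2$ while $R$ is an arbitrary class member. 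Your system, with the convention $A^{(0)}=1$, telescopes to $A^{(\ell)} = u^\ell$ for $u = x/(1 - x(1+B))$; substituting $B=\sum_{j=1}^{k-2}u^j$ gives $u = x\sum_{j=0}^{k-1}u^j$, and since $A = \sum_{j=0}^{k-1}u^j$ this forces $u = xA$, hence $A = \sum_{j=0}^{k-1}(xA)^j$. Lagrange inversion applied to $v = xA$, $\Phi(v)=(1-v^k)/(1-v)$, using $\binom{2n-jk}{n-jk}=\binom{2n-jk}{n}$, yields precisely the displayed formula for $M_{k-1,n}$. What your route buys over the paper's citation is transparency: it exhibits the class as the $k$-ary analogue of the Catalan/Motzkin tree families directly from the run structure, rather than outsourcing the count. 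Your hedging (``I expect the system'') undersells the argument; the recurrence is exactly right.
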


The above theorem completes our enumeration of $\Sort(\SC_{k(k-1) \cdots 1})$. We remark that when $k=3$, the generalized Motzkin numbers $M_{2,n}$ are the same as the classical Motzkin numbers $M_n$. These numbers will also be related to our enumeration of $\Sort(\SC_{123})$.

\begin{corollary} \label{ThmEnum321} 
The set $\Sort(\SC_{321})$ is enumerated by the Motzkin numbers. That is, 
\begin{equation*}
    |\Sort_n(\SC_{321})| = M_n.
\end{equation*}
\end{corollary}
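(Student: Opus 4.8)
The plan is to deduce Corollary~\ref{ThmEnum321} directly from the preceding theorem by specializing $k=3$. The key observations are that $\SC_{321}=\SC_{k(k-1)\cdots 1}$ when $k=3$, so that $\Sort(\SC_{321})=\Sort(\SC_{321}) = \Av(132,\underline{123})$ by Lemma~\ref{Lem1} (whose hypothesis is satisfied because $\widehat{321}=231$ visibly contains $231$), and that the generalized Motzkin number $M_{k-1,n}$ reduces to the ordinary Motzkin number $M_n$ when $k=3$, i.e.\ $M_{2,n}=M_n$. Both of these facts are stated explicitly in the excerpt (the latter in the remark following the theorem), so the corollary is essentially an instantiation.

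Concretely, first I would invoke the theorem with $k=3$ to get $|\Sort_n(\SC_{321})|=M_{2,n}$. Then I would verify $M_{2,n}=M_n$ either by quoting the standard identity $M_n=\frac{1}{n+1}\sum_{j=0}^{\lfloor n/3\rfloor}(-1)^j\binom{n+1}{j}\binom{2n-3j}{n}$ (which is one of the classical closed forms for the Motzkin numbers), or simply by pointing to the remark in the excerpt that records $M_{2,n}=M_n$ and the fact that OEIS sequence A001006 is the Motzkin sequence. Either route gives $|\Sort_n(\SC_{321})|=M_n$ immediately.

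There is no real obstacle here; the only thing to be careful about is making sure the hypothesis of the cited theorem ($k\geq 3$) and of Lemma~\ref{Lem1} ($\widehat\sigma$ contains $231$ classically) both hold, which they do for $\sigma=321$. So the ``proof'' is a one-line specialization: apply the theorem with $k=3$, use $M_{2,n}=M_n$, and conclude.

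\begin{proof}
Taking $k=3$ in the previous theorem, we obtain $|\Sort_n(\SC_{321})|=M_{2,n}$. As noted in the remark following that theorem, the generalized Motzkin numbers $M_{2,n}$ coincide with the classical Motzkin numbers $M_n$; indeed, setting $k=3$ in the displayed formula for $M_{k-1,n}$ gives the standard closed form
\[
M_n=\frac{1}{n+1}\sum_{j=0}^{\lfloor n/3\rfloor}(-1)^j\binom{n+1}{j}\binom{2n-3j}{n}
\]
for the Motzkin numbers. Hence $|\Sort_n(\SC_{321})|=M_n$.
\end{proof}
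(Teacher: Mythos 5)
Your proof is correct and matches the paper's approach: the corollary is exactly the $k=3$ specialization of the preceding theorem from Hein--Huang, combined with the identity $M_{2,n}=M_n$, which the paper itself records in the remark before the corollary. (The only blemish is the typo ``$\Sort(\SC_{321})=\Sort(\SC_{321})$'' in your discussion, where the left-hand side should be $\Sort(\SC_{k(k-1)\cdots 1})$ with $k=3$; the actual proof block is fine.)
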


The following is a well-known recurrence for the Motzkin numbers.

\begin{lemma} \label{LemMotzkinRecur}
We have $M_0 = M_1 = 1$ and, for $n\geq 2$,
\begin{equation*}
    M_n = M_{n-1} + \sum_{i=0}^{n-2} M_{i}M_{n-2-i}.
\end{equation*}
\end{lemma}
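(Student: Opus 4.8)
The plan is to prove the recurrence by interpreting $M_n$ as the number of \emph{Motzkin paths} of length $n$ — lattice paths from $(0,0)$ to $(n,0)$ using up-steps $U=(1,1)$, down-steps $D=(1,-1)$, and flat steps $F=(1,0)$ that never pass below the $x$-axis — and then performing a first-return decomposition. The identification of $M_n=M_{2,n}$ with the number of such paths is classical (it is equivalent, via Lagrange inversion, to the closed formula for $M_{k-1,n}$ given above, or it can simply be cited from a standard reference on Motzkin numbers); granting it, the base cases $M_0=M_1=1$ are immediate, since the only Motzkin paths of lengths $0$ and $1$ are the empty path and the single flat step $F$.

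For $n\ge 2$ I would partition the set of Motzkin paths of length $n$ according to the first step. If the first step is $F$, deleting it is a bijection onto the Motzkin paths of length $n-1$, contributing the term $M_{n-1}$. If the first step is $U$, the path must eventually return to height $0$; writing $D$ for the step that first does so, the path factors uniquely as $P=U\,Q\,D\,R$, where $Q$ is a Motzkin path of some length $i$ with $0\le i\le n-2$ and $R$ is a Motzkin path of length $n-2-i$ (between the initial $U$ and its matching $D$ the path stays at height $\ge 1$, so $Q$ never drops below its starting level). Conversely, every pair $(Q,R)$ of Motzkin paths with $|Q|+|R|=n-2$ arises this way, so the paths beginning with $U$ are counted by $\sum_{i=0}^{n-2}M_iM_{n-2-i}$. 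Adding the two cases gives the stated recurrence.

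Equivalently — and this is perhaps the cleanest write-up — the same decomposition shows the generating function $M(x)=\sum_{n\ge 0}M_nx^n$ satisfies $M(x)=1+xM(x)+x^2M(x)^2$ (the $1$ for the empty path, $xM(x)$ for paths beginning with $F$, and $x^2M(x)^2$ for the first-return factorization of paths beginning with $U$); extracting the coefficient of $x^n$ for $n\ge 2$ yields $M_n=M_{n-1}+[x^{n-2}]M(x)^2=M_{n-1}+\sum_{i=0}^{n-2}M_iM_{n-2-i}$. There is essentially no serious obstacle here, as this is a textbook fact; the only point needing a word of care is that the paper introduces $M_n$ through the closed formula rather than through Motzkin paths, so a fully self-contained argument must invoke the standard identification of the two, whereas in practice one would simply cite a reference and record the one-line decomposition.
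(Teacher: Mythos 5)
The paper offers no proof of this lemma at all: it simply records the recurrence as ``well-known'' and cites nothing beyond that, so there is no argument in the text to compare against yours. Your proof is correct and is the standard one: the base cases are immediate, and for $n\geq 2$ the first-return decomposition of Motzkin paths (first step $F$ gives the $M_{n-1}$ term; first step $U$ with matching first-return $D$ gives $UQDR$ and hence the convolution $\sum_{i=0}^{n-2}M_iM_{n-2-i}$) is airtight, as is the equivalent generating-function formulation $M(x)=1+xM(x)+x^2M(x)^2$. You also correctly flag the one genuine subtlety a fully self-contained write-up must address, namely that the paper introduces $M_n$ as $M_{2,n}$ via the closed binomial-sum formula rather than via lattice paths, so one must either cite the standard equivalence of those two definitions or derive it; your decision to cite it is exactly what the paper itself implicitly does by labeling the recurrence well-known. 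In short, your proof supplies a complete argument where the paper supplies none, and it does so by the expected route.
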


To begin enumerating $\Sort(\SC_{123})$, we first classify the set. In order to do this, it is helpful to use the notion of a \emph{vincular pattern}. We refer the reader to \cite{Steingrimsson} for a formal treatment of vincular patterns; we will only need to consider the set $\Av(132,\underline{321}4,\underline{421}3,\underline{431}2)$. This is the set of permutations that avoid $132$ and also do not contain any occurrences of the patterns $3214$, $4213$, or $4312$ in which the first $3$ entries in the occurrence of the pattern appear consecutively in the permutation. For example, $51324$ belongs to this set of permutations because, although $5,3,2,4$ form an occurrence of the pattern $4213$, the entries $5,3,2$ do not appear consecutively in $51324$. 
\begin{theorem} \label{ThmSort123}
The set $\Sort(\SC_{123})$ consists of the permutations $\pi = \pi_1 \cdots \pi_n$, for some positive integer $n$, that satisfy the following properties:
\begin{itemize}
    \item $\pi \in \Av(132)$
    \item if $\pi_i\pi_{i+1} \cdots \pi_{j}$ is a descending run of $\pi$ of length at least $3$, then there are no entries bigger than $\pi_{j-1}$ to the right of $\pi_{j-1}$ in $\pi$, and 
    the entries $\pi_{i+1},...,\pi_{j-1}$ are consecutive integers.
\end{itemize}
Alternatively, $\Sort(\SC_{123})=\Av(132,\underline{321}4,\underline{421}3,\underline{431}2)$. 
\begin{proof}
It is straightforward to check that a permutation satisfies the two listed conditions if and only if it is in $\Av(132,\underline{321}4,\underline{421}3,\underline{431}2)$. The proof of the first statement is straightforward once we find an expression for $\SC_{123}(\pi)$. Fortunately, this was done in Lemma~\ref{LemSC321 map} for the map $\SC_{321}$, so we may use Lemma~\ref{LemComplement} to find a similar expression for $\SC_{123}(\pi)$ by complementing. Namely, suppose $\pi = d_1 \cdots d_k$, where each $d_i$ is a descending run. We have $\SC_{123}(\pi) = d_1^{m}\cdots d_k^{m}\rev(d_k^e) \cdots \rev(d_1^e)$, where $d_i^m$ is the subsequence of $d_i$ consisting of the entries that are not the first or last entries in $d_i$ and $d_i^{e}$ is the sequence obtained from $d_i$ by deleting $d_i^m$.

To see that all permutations in $\Sort(\SC_{123})$ must satisfy the two conditions, note first that if $abc$ forms an occurrence of the classical pattern $132$ in $\pi$ and $a$,$b$, and $c$ are in the descending runs $d_i, d_j$, and $d_{\ell}$ respectively, then $c$, the second entry of $\rev(d_j^e)$, and the first entry of $\rev(d_i^e)$ form an occurrence of the classical pattern $231$ in $\SC_{123}(\pi)$. This shows that $\Sort(\SC_{123})\subseteq\Av(132)$. For the first part of the second condition, suppose $\pi_i\pi_{i+1} \cdots \pi_{j}$ is a descending run of length at least $3$ and that $\pi_{\ell}$ is an entry to the right of $\pi_{j}$ that is also the first entry of a different descending run. Then $\pi_{j-1} \pi_{\ell} \pi_{j}$ is a subsequence of $\SC_{123}(\pi)$, and  we certainly have $\pi_j < \pi_{j-1}$. It follows that if $\SC_{123}(\pi) \in \Av(231)$, then $\pi_{\ell} < \pi_{j-1}$. Hence, no entry to the right of $\pi_{j-1}$ in $\pi$ is bigger than $\pi_{j-1}$. Finally, the previous two points imply the second part of the second condition. If the entries $\pi_{i+1},...,\pi_{j-1}$ are not consecutive integers, then there is some entry $\pi_{i'}$ that is not part of the descending run containing these entries and that satisfies $\pi_{i+1} > \pi_{i'} > \pi_{j-1}$. Then either  $\pi_{i'}\pi_i\pi_{i+1}$ is a $132$-subsequence or there is an entry to the right of $\pi_{j-1}$ that is bigger than $\pi_{j-1}$.

To see that every permutation satisfying these conditions is indeed in $\Sort(\SC_{123})$, again decompose $\pi$ into its descending runs and write $\SC_{123}(\pi) =  d_1^{m}\cdots d_k^{m}\rev(d_k^e) \cdots \rev(d_1^e)$. The first condition ensures that $\rev(d_k^e) \cdots \rev(d_1^e)$ does not contain an occurrence of the classical pattern $231$. The first part of the second condition ensures that $d_1^{m}\cdots d_k^{m}$ is a decreasing sequence, so it also does not contain an occurrence of $231$. Finally, the first condition and first part of the second condition ensure that there is no occurrence of the pattern $231$ containing entries from both $d_1^{m}\cdots d_k^{m}$ and $\rev(d_k^e) \cdots \rev(d_1^e)$. Indeed, suppose $abc$ is an occurrence such a pattern. As $d_1^{m}\cdots d_k^{m}$ is a decreasing sequence, the entry $a$ must be in $d_1^{m}\cdots d_k^{m}$ while $b$ and $c$ are in  $\rev(d_k^e) \cdots \rev(d_1^e)$. Let $i,j,\ell$ be the indices such that $a$ is in $d_i^m$, $b$ is in $d_j^e$, and $c$ is in $d_{\ell}^e$. Since $c$ appears to the right of $b$ in $\SC_{123}(\pi)$, we must have $\ell<j$. The first part of the second condition guarantees that $j\leq i$. However, this implies that $cba$ is an occurrence of the classical pattern $132$ in $\pi$, which is a contradiction. 
\end{proof}
\end{theorem}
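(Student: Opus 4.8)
The plan is to reduce the whole statement to understanding the image $\SC_{123}(\pi)$ explicitly and then checking when that image avoids $231$, since by definition $\Sort(\SC_{123})=\SC_{123}^{-1}(\Av(231))$. The first step is to obtain a closed form for $\SC_{123}$ analogous to Lemma~\ref{LemSC321 map}. Since $123=\comp(321)$, Lemma~\ref{LemComplement} gives $\SC_{123}=\comp\circ\SC_{321}\circ\comp$, and complementation interchanges ascending and descending runs; so writing $\pi=d_1\cdots d_k$ as a concatenation of descending runs and letting $d_i^m$ be the interior of $d_i$ and $d_i^e$ its two-letter endpoint word, one gets
\[\SC_{123}(\pi)=d_1^m\cdots d_k^m\,\rev(d_k^e)\cdots\rev(d_1^e).\]
Everything else is an analysis of $231$-patterns in this word.

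For necessity, assume $\SC_{123}(\pi)\in\Av(231)$. The $132$-avoidance of $\pi$ follows from noting that the endpoint suffix $\rev(d_k^e)\cdots\rev(d_1^e)$ equals $\rev(d_1^e\cdots d_k^e)$, the reverse of a subsequence of $\pi$: an occurrence of $132$ in $\pi$, with its large middle letter in some run $d_j$ whose endpoint word contributes $\max d_j$, yields an occurrence of $231=\rev(132)$ in this suffix. For the second bullet I would take a descending run $\pi_i\cdots\pi_j$ of length at least $3$ and track where $\pi_{j-1}$ (the last letter of $d^m$, lying in the prefix of $\SC_{123}(\pi)$) and $\pi_j$ (the first letter of $\rev(d^e)$, lying in the suffix) end up: if some $\pi_\ell$ to the right of $\pi_j$ begins a later descending run, then $\pi_{j-1},\pi_\ell,\pi_j$ appear in that order in $\SC_{123}(\pi)$, so avoiding $231$ forces $\pi_\ell<\pi_{j-1}$; since the first entry of a descending run is its maximum, this upgrades to ``no entry to the right of $\pi_{j-1}$ exceeds $\pi_{j-1}$.'' The consecutivity claim then follows: if $\pi_{i'}\notin\{\pi_{i+1},\dots,\pi_{j-1}\}$ satisfies $\pi_{i+1}>\pi_{i'}>\pi_{j-1}$, then $i'<i$ produces the $132$-occurrence $\pi_{i'}\pi_i\pi_{i+1}$, while $i'>j$ contradicts what was just proved.

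For sufficiency, assume $\pi$ satisfies both bullets and split $\SC_{123}(\pi)$ into the prefix $d_1^m\cdots d_k^m$ and the suffix $\rev(d_k^e)\cdots\rev(d_1^e)$. The suffix avoids $231$ because it is the reverse of a $132$-avoiding subsequence of $\pi$; the prefix is a decreasing word (this is exactly the first part of the second bullet, which makes the last letter of each $d_i^m$ dominate everything after it), so it contains no length-$3$ pattern at all. For a putative $231$-occurrence $abc$ straddling the two parts, $a$ must lie in the prefix and $b,c$ in the suffix; the first part of the second bullet forces the run of $b$ to have index at most that of the run of $a$, and combined with the order in which $b$ and $c$ emerge this shows that $c$, $b$, $a$ occur in that order in $\pi$ with values in the relative order $132$ — a contradiction. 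Finally, the equivalence with $\Av(132,\underline{321}4,\underline{421}3,\underline{431}2)$ is a routine translation: forbidding an interior entry of a long descending run from being exceeded later, together with consecutivity of that interior, is exactly the stated vincular avoidance. I expect the main obstacle to be the bookkeeping in the straddling-$231$ case of sufficiency, together with the symmetric case analysis in the necessity of the second bullet: one must keep careful track of which block of $\SC_{123}(\pi)$ each entry lands in and verify the run-index inequalities that turn the resulting pattern back into a genuine $132$ in $\pi$.
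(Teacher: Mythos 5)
Your proposal follows the paper's strategy closely — same complementation of Lemma~\ref{LemSC321 map} to get the closed form for $\SC_{123}$, same case analysis for the second bullet, same three-part decomposition of the sufficiency argument (suffix, prefix, straddling case). The one place where you diverge is also the one place where you have a genuine gap: your argument that $\Sort(\SC_{123})\subseteq\Av(132)$.

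You claim that a $132$-occurrence in $\pi$ "yields an occurrence of $231=\rev(132)$ in this suffix,'' i.e., entirely inside $\rev(d_k^e)\cdots\rev(d_1^e)=\rev(d_1^e\cdots d_k^e)$. This would follow if a $132$ in $\pi$ always forced a $132$ in the endpoint subsequence $d_1^e\cdots d_k^e$, but that implication is false. Take $\pi=3\,2\,5\,4\,1$, with descending runs $d_1=32$ and $d_2=541$. Here $\pi$ contains the $132$-pattern $2,5,4$ (and $3,5,4$), but the entry $c=4$ is interior to $d_2$, and the endpoint word $d_1^e d_2^e=3\,2\,5\,1$ has no $132$-occurrence at all: the only candidates for a small--large pair $(a,b)$ with $a<b$ are $(3,5)$ and $(2,5)$, and the only remaining entry to their right is $1$, which is too small. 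Correspondingly, $\SC_{123}(\pi)=4\,1\,5\,2\,3$ does contain $231$ (e.g. $4,5,2$), but its suffix $1\,5\,2\,3$ does not. The offending $231$-occurrence straddles the prefix--suffix boundary, with $c=4$ sitting in $d_2^m$. The paper's argument is careful about exactly this: it takes the three entries $c$, $\max(d_j)$, $\min(d_i)$ and only asserts that they occur in that order in $\SC_{123}(\pi)$, allowing $c$ to land in $d_1^m\cdots d_k^m$ when it is interior to its run. To repair your proof, drop the assertion that the $231$ is internal to the suffix and instead argue directly (as the paper does) that $\min(d_i)<c<\max(d_j)$ and that $c$, wherever it lands, precedes $\max(d_j)$ which precedes $\min(d_i)$ in $\SC_{123}(\pi)$.
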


Before enumerating the permutations in $\Sort(\SC_{123})$, we need one more lemma that counts permutations avoiding $132$ classically and $123$ consecutively. We obtain this result via a bijection between $\Av(132)$ and $\Av(231)$. For our purposes, we only need the existence of this bijection and one of its properties. The bijection is described in detail in Lemma~4.1 of \cite{DefantCatalan}. 

\begin{lemma}[\!\!\cite{DefantCatalan}]
For each $n\geq 1$, there exists a bijection $\swd: \Av_n(132)\to \Av_n(231)$ that preserves the set of descents. That is, an index $i$ is a descent of $\pi$ if and only if it is a descent of $\swd(\pi)$.
\end{lemma}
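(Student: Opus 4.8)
The plan is to build $\swd$ recursively, exploiting the fact that both $\Av_n(132)$ and $\Av_n(231)$ decompose cleanly around the position of their largest entry. If $\pi\in\Av_n(132)$ and $n=\pi_p$, write $\pi=\alpha\,n\,\beta$; avoidance of $132$ forces every entry of $\alpha$ to exceed every entry of $\beta$, so $\alpha$ occupies the top $p-1$ values, $\beta$ the bottom $n-p$ values, and (after standardizing $\alpha$) both blocks again avoid $132$. This yields a bijection $\Av_n(132)\cong\bigsqcup_{p=1}^{n}\Av_{p-1}(132)\times\Av_{n-p}(132)$. Dually, if $\rho\in\Av_n(231)$ and $n=\rho_q$, then $\rho=\gamma\,n\,\delta$ where avoidance of $231$ forces every entry of $\gamma$ to be smaller than every entry of $\delta$; thus $\gamma$ occupies the bottom $q-1$ values, $\delta$ the top $n-q$ values, both avoiding $231$, giving $\Av_n(231)\cong\bigsqcup_{q=1}^{n}\Av_{q-1}(231)\times\Av_{n-q}(231)$.

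I would then set, for $\pi=\alpha\,n\,\beta$ as above, $\swd(\pi)$ to be the $231$-avoider whose largest entry also sits in position $p$, obtained by placing $\swd(\operatorname{std}(\alpha))$ into the ``bottom'' slot $\gamma$ and placing $\swd(\operatorname{std}(\beta))$, with every entry then increased by $p-1$, into the ``top'' slot $\delta$ (the base case $n\le 1$ being trivial). Since both decompositions displayed above are bijections and, inductively, $\swd$ is already a bijection on permutations of length less than $n$, the composite $\swd\colon\Av_n(132)\to\Av_n(231)$ is a bijection.

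It remains to verify that $\swd$ preserves the descent set, which I would do by partitioning $\{1,\ldots,n-1\}$ according to the block structure. A descent (or ascent) of $\pi$ at a position strictly inside the left block is a descent (or ascent) at the same position of $\operatorname{std}(\alpha)$, hence of $\swd(\operatorname{std}(\alpha))=\gamma$, hence of $\rho$; likewise a descent of $\pi$ at position $p+i$ records a descent at position $i$ of $\operatorname{std}(\beta)$, which survives both the recursive call and the uniform value shift, so it corresponds to a descent of $\rho$ at position $p+i$. The two ``seam'' positions are handled identically in the two classes: because $n$ is the maximum entry, position $p-1$ (when $p>1$) is an ascent and position $p$ (when $p<n$) is a descent, in both $\pi$ and $\rho$. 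Adding up the cases gives that $\pi$ and $\rho=\swd(\pi)$ have the same descent set.

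The argument is not deep, but the feature that makes it work — and the place it could fail — is the alignment at the seam: one must notice that the maximal entry occupies a structurally identical ``peak'' (an ascent immediately before it, a descent immediately after it) in a $132$-avoider and in a $231$-avoider, so fixing its position is automatically compatible with the descent statistic. One also has to track the value re-ranking carefully, since the small-value and large-value halves are attached to the left and right blocks in opposite ways for the two classes; this is exactly what forces $\swd(\operatorname{std}(\alpha))$ into the $\gamma$ slot and the shifted $\swd(\operatorname{std}(\beta))$ into the $\delta$ slot rather than the reverse. A slicker-looking but essentially equivalent route would pass through Dyck paths, choosing encodings of $\Av_n(132)$ and $\Av_n(231)$ for which the descent statistic translates to a common path statistic; the recursive description above already exhibits that translation.
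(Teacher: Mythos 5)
Your proposal is correct, and the recursive decomposition around the position of the maximum is a sound and complete way to build the bijection and to verify that it preserves descents. The seam analysis (ascent immediately before $n$, descent immediately after $n$, in both classes) is exactly the observation needed, and the in-block positions are handled correctly by induction together with the fact that standardization and adding a constant to every entry both preserve descents.

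One thing to be aware of: the paper does not prove this lemma at all. It cites it as Lemma~4.1 of \cite{DefantCatalan} and treats the existence of the map $\swd$ as a black box; the only property it uses is descent-preservation. So there is no internal argument to compare yours against. The construction in \cite{DefantCatalan} is presented differently (the name $\swd$ comes from that paper's machinery, which is built around stack-sorting), but the underlying combinatorial content is the same binary-tree/Catalan decomposition you used: both $\Av_n(132)$ and $\Av_n(231)$ split around the position of $n$ into an independent left block and right block, with the value ranges of the two blocks swapped between the two classes, and matching these up gives a descent-preserving bijection. Your version is self-contained and slightly more elementary than invoking the cited machinery; either suffices for the use made of the lemma in Corollary~\ref{Cor123-321}.
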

\begin{corollary} \label{Cor123-321}
For each $n\geq 1$, the sets of permutations $\Av_n(132, \underline{321})$, $\Av_n(231, \underline{321})$, and $\Av_n(132, \underline{123})$ are in bijection. Consequently, $|\Av_n(132, \underline{321})| = M_n$. 
\end{corollary}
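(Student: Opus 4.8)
The plan is to reduce the entire statement to the descent-preserving bijection $\swd:\Av_n(132)\to\Av_n(231)$ together with the reversal map. The one elementary observation that makes everything work is that avoiding a monotone consecutive pattern of length $3$ is a condition on the descent set alone. Indeed, a permutation $\pi$ contains $\underline{321}$ consecutively if and only if there is an index $i$ such that both $i$ and $i+1$ are descents of $\pi$, and $\pi$ contains $\underline{123}$ consecutively if and only if there is an index $i$ such that both $i$ and $i+1$ are ascents of $\pi$; each of these is a one-line check against the definition. Consequently, if $\phi$ is any bijection between two subsets of $S_n$ that preserves the descent set of each permutation, then $\phi$ restricts to a bijection between the elements of its domain and codomain that avoid $\underline{321}$ consecutively (and likewise for $\underline{123}$).

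First I would apply this principle to the bijection $\swd:\Av_n(132)\to\Av_n(231)$, which preserves descents by the cited result from \cite{DefantCatalan}. Since a permutation in $\Av_n(132)$ lies in $\Av_n(132,\underline{321})$ precisely when it has no two consecutive descents, and $\swd$ preserves this property, $\swd$ restricts to a bijection $\Av_n(132,\underline{321})\to\Av_n(231,\underline{321})$.

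Next I would bring in the reversal map $\rev$. Reversing the one-line notation carries an occurrence (classical or consecutive) of a pattern $\tau$ to an occurrence of $\rev(\tau)$, so $\pi\in\Av_n(\tau^{(1)},\tau^{(2)},\ldots)$ if and only if $\rev(\pi)\in\Av_n(\rev(\tau^{(1)}),\rev(\tau^{(2)}),\ldots)$. Since $\rev(231)=132$ and $\rev(\underline{321})=\underline{123}$, the involution $\rev$ carries $\Av_n(231,\underline{321})$ bijectively onto $\Av_n(132,\underline{123})$. Composing, $\rev\circ\swd$ is a bijection $\Av_n(132,\underline{321})\to\Av_n(132,\underline{123})$, which together with the previous paragraph shows that all three sets are in bijection. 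Finally, combining Lemma~\ref{Lem1} (with $\sigma=321$, so that $\Sort(\SC_{321})=\Av(132,\underline{123})$) with Corollary~\ref{ThmEnum321} gives $|\Av_n(132,\underline{123})|=|\Sort_n(\SC_{321})|=M_n$, whence $|\Av_n(132,\underline{321})|=M_n$ as well.

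I do not expect a genuine obstacle here; the argument is an assembly of known maps. The only point requiring a moment's care is the opening observation that consecutive-$\underline{321}$ (resp.\ $\underline{123}$) avoidance is governed entirely by the descent set, since it is precisely this that lets the descent-preserving map $\swd$ be used without further analysis of its internal structure.
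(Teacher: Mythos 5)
Your proposal is correct and takes essentially the same route as the paper: compose the descent-preserving bijection $\swd$ (which sends $\Av_n(132,\underline{321})$ to $\Av_n(231,\underline{321})$ precisely because consecutive-$\underline{321}$ avoidance is a descent-set condition) with reversal, then invoke $\Sort(\SC_{321})=\Av(132,\underline{123})$ and Corollary~\ref{ThmEnum321}. You have in fact spelled out more carefully than the paper does the key observation that monotone length-$3$ consecutive avoidance depends only on the descent set, which is exactly what justifies restricting $\swd$.
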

\begin{proof}
Using the previous lemma, we obtain a chain of bijections \[\Av_n(132, \underline{321})\xrightarrow{\swd}\Av_n(231, \underline{321})\xrightarrow{\rev}\Av_n(132, \underline{123}).\]
By Theorem~\ref{ThmEnum321},  $|\Av_n(132, \underline{321})| = |\Av_n(132, \underline{123})| = M_n$. 
\end{proof}

We need one final lemma before completing the enumeration of $\Sort(\SC_{123})$.
\begin{lemma} \label{LemEnum123}
The number of permutations in $\Sort_n(\SC_{123})$ beginning with the entry $n$ is $M_{n-1}$.
\end{lemma}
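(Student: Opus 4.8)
The plan is to show that the permutations in $\Sort_n(\SC_{123})$ that begin with $n$ are in bijection with the permutations counted by $M_{n-1}$, and the most economical route is to exhibit a bijection with one of the sets already known to have size $M_{n-1}$ — namely $\Av_{n-1}(132,\underline{321})$ from Corollary~\ref{Cor123-321}. First I would use the characterization from Theorem~\ref{ThmSort123}: a permutation $\pi=\pi_1\cdots\pi_n$ lies in $\Sort_n(\SC_{123})$ if and only if $\pi\in\Av(132)$ and every descending run of length at least $3$, say $\pi_i\cdots\pi_j$, has no entry larger than $\pi_{j-1}$ to its right and has $\pi_{i+1},\ldots,\pi_{j-1}$ consecutive integers. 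When $\pi_1=n$, the entry $n$ starts the first descending run, and I would analyze what constraint the $132$-avoidance together with the descending-run condition places on the rest of $\pi$.

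The key structural observation to establish is this: if $\pi_1=n$ and $\pi\in\Sort_n(\SC_{123})$, then deleting $\pi_1=n$ leaves a permutation of $[n-1]$ that is again in $\Sort_{n-1}(\SC_{123})$, so the natural candidate map is simply $\pi\mapsto\pi_2\cdots\pi_n$, viewed as a permutation of $[n-1]$. I would verify both directions. For the forward direction: deleting the leading $n$ cannot create a $132$-pattern, and it cannot lengthen any descending run except possibly by affecting the run starting at $\pi_2$; one checks that the descending-run conditions for $\pi_2\cdots\pi_n$ follow from those for $\pi$ (the presence of $n$ at the front only ever helped). For the reverse direction: given $\rho\in\Sort_{n-1}(\SC_{123})$, prepending $n$ yields a permutation $\pi=n\rho_1\cdots\rho_{n-1}$; since $n$ is the maximum, it cannot be the "$1$" or "$3$" of a $132$-pattern and cannot be the "$2$" either (nothing is larger), so $\pi\in\Av(132)$, and the only descending run affected is the one beginning at $\pi_1$, whose tail $\pi_2\cdots$ already satisfies the required conditions in $\rho$ — the only thing to check is that prepending $n$ to a descending run of $\rho$ of length $\geq 2$ doesn't violate the "consecutive integers in positions $i+1,\ldots,j-1$" clause, which it does not because that clause concerns the interior of the run, and prepending $n$ changes only the first entry.

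Once the bijection $\Sort_n(\SC_{123})\cap\{\pi:\pi_1=n\}\to\Sort_{n-1}(\SC_{123})$ is established, the count is immediate: by Theorem~\ref{ThmSort123}, $\Sort_{n-1}(\SC_{123})=\Av_{n-1}(132,\underline{321}4,\underline{421}3,\underline{431}2)$. I would then want to conclude this has size $M_{n-1}$; the cleanest way, given what is available, is to observe that the enumeration of $\Sort(\SC_{123})$ by Motzkin numbers is precisely what the surrounding sequence of lemmas is building toward — so rather than invoking an unproved count, I would instead route the bijection directly to $\Av_{n-1}(132,\underline{321})$, which has size $M_{n-1}$ by Corollary~\ref{Cor123-321}. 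Concretely, I suspect the intended argument is that a permutation $\pi$ in $\Sort_n(\SC_{123})$ beginning with $n$ is determined by its restriction, and that restriction ranges exactly over a Motzkin-counted set; whichever known $M_{n-1}$-set one lands on, the combinatorial heart is the deletion/prepending argument above.

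**Main obstacle.** The delicate point — and the step I expect to consume most of the work — is the reverse direction's interaction with the descending-run condition when $\pi_2\cdots$ itself starts a long descending run: I must be certain that prepending $n$ to form a descending run $n\pi_2\cdots\pi_j$ of length $\geq 3$ does not require $\pi_2,\ldots,\pi_{j-1}$ to be consecutive integers in a way that $\rho$'s membership in $\Sort_{n-1}(\SC_{123})$ fails to guarantee. Unwinding Theorem~\ref{ThmSort123} carefully, the interior of the new run is $\pi_2,\ldots,\pi_{j-1}$, which is the interior of $\rho$'s run $\rho_1\cdots\rho_{j-1}$ shifted by one position, plus one extra entry — so I will need to check precisely how the "interior consecutive integers" and "nothing bigger than the penultimate entry to the right" clauses transform under prepending, and confirm they are implied rather than newly imposed. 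This is a short but fiddly case-check; everything else is routine.
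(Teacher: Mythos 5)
There is a genuine gap, and it is located precisely at the step you flagged as your ``main obstacle.'' Your claimed bijection $\pi\mapsto\pi_2\cdots\pi_n$ from $\{\pi\in\Sort_n(\SC_{123}):\pi_1=n\}$ onto $\Sort_{n-1}(\SC_{123})$ fails in the reverse direction: it is \emph{not} true that prepending $n$ to an arbitrary $\rho\in\Sort_{n-1}(\SC_{123})$ yields an element of $\Sort_n(\SC_{123})$. A concrete counterexample: $\rho=213\in\Sort_3(\SC_{123})$, but $4\rho=4213\notin\Sort_4(\SC_{123})$, since $\SC_{123}(4213)=2314$ contains $231$. The failure occurs exactly through the descending-run condition of Theorem~\ref{ThmSort123}: prepending $n$ turns the length-$2$ descending run $21$ of $\rho$ into the length-$3$ run $421$, triggering the requirement ``no entry to the right of $\pi_{j-1}=2$ is larger than $2$,'' which is violated by the trailing $3$. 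This is a constraint that membership of $\rho$ in $\Sort_{n-1}(\SC_{123})$ does \emph{not} guarantee when $\rho$ starts with a descent and its first entry is smaller than $n-1$.

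The arithmetic also confirms the gap: if your bijection held, the count would be $|\Sort_{n-1}(\SC_{123})|=M_n-M_{n-1}$ (the theorem immediately following this lemma), not the claimed $M_{n-1}$; these already disagree at $n=4$, where $M_4-M_3=5\neq 4=M_3$. Your fallback suggestion to instead route directly to $\Av_{n-1}(132,\underline{321})$ is not carried out -- the deletion map does not land in that set, and no alternative bijection is proposed. The paper's actual argument is different in kind: it is an inductive count, not a bijection. It shows that $\pi'=\pi_2\cdots\pi_n$ lies in $\Sort_{n-1}(\SC_{123})$ \emph{and additionally} either starts with $n-1$ or starts with an ascent, and conversely exactly these $\pi'$ extend. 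It then counts the two cases separately (the first by the inductive hypothesis, the second via a skew-sum-style decomposition $A(n-1)B$ with $A$ counted by Corollary~\ref{Cor123-321} and $n B$ again by induction), and closes with the Motzkin recurrence of Lemma~\ref{LemMotzkinRecur}. That extra restriction on $\pi'$ is the piece your proposal is missing, and it is load-bearing.
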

\begin{proof}
We proceed by induction on $n$, noting first that the lemma is trivial if $n \leq 2$. Assume $n\geq 3$. Suppose $\pi\in \Sort_n(\SC_{123})$ starts with the entry $n$, and let $\pi'$ be the permutation obtained by deleting the leading entry $n$ from $\pi$. It follows from Theorem~\ref{ThmSort123} that $\pi'\in \Sort_{n-1}(\SC_{123})$ and that $\pi'$ either starts with the entry $n-1$ or starts with an ascent. On the other hand, if $\tau\in \Sort_{n-1}(\SC_{123})$ starts with the entry $n-1$ or starts with an ascent, then the permutation $n\tau$ is in $\Sort_n(\SC_{123})$. By the induction hypothesis, there are $M_{n-2}$ permutations in $\Sort_{n-1}(\SC_{123})$ that start with $n-1$. 

We now need to determine the number of permutations $\tau\in \Sort_{n-1}(\SC_{123})$ that start with an ascent. Let $i\in\{2,\ldots,n-1\}$ be such that $\tau_i=n-1$. We may write $\tau$ as $A(n-1)B$, where $A$ does not start with a descent (meaning it either starts with an ascent or consists of a single entry). Since $\tau$ must avoid $132$ by Theorem~\ref{ThmSort123}, every entry in $A$ is greater than every entry in $B$, so the entries in $A$ are $n-i+2,\ldots,n-2$ and the entries in $B$ are $1, \ldots, n-i+1$. Since $(n-1)B$ has length $n-i+1$ and has the same relative order as $(n-i+1)B$, the number of possibilities for $(n-1)B$ is the same as the number of possibilities for $(n-i+1)B$. Now $(n-i+1)B$ can be any element of $\Sort_{n-i+1}(\SC_{123})$ that starts with $n-i+1$, so it follows from the induction hypothesis that the number of possibilities for $(n-1)B$ is $M_{n-i}$. Next, for each possible choice of $A$, we can subtract $n-i+1$ from each entry of $A$. Theorem~\ref{ThmSort123} implies that this operation yields a bijection between the set of possible choices for $A$ and the set of permutations in $\Av_{i-1}(132,\underline{321})$ that do not start with a descent. If $i\geq 3$, then since $A$ avoids $132$ and starts with an ascent, the first two entries of $A$ must be consecutive integers. As a result, removing the first entry and standardizing gives a bijection between permutations in $\Av_{i-1}(132,\underline{321})$ that do not start with a descent and permutations in $\Av_{i-2}(132,\underline{321})$. This implies that there are $M_{i-2}$ possibilities for $A$ by Lemma~\ref{Cor123-321}. If $i=2$, then it is also certainly true that the number of choices for $A$ is $M_{i-2}=M_0=1$. 

Putting this all together and invoking the Motzkin number recurrence in Lemma~\ref{LemMotzkinRecur}, we find that the number of permutations in $\Sort_n(\SC_{123})$ beginning with $n$ is \[M_{n-2} + \sum_{i=2}^{n-1}M_{i-2}M_{n-i} = M_{n-2} + \sum_{i=0}^{n-3}M_{i}M_{n-3-i} = M_{n-1}.\qedhere\]
\end{proof}

We can now prove that $\Sort(\SC_{123})$ is enumerated by the first differences of Motzkin numbers, which form the OEIS sequence A002026 \cite{OEIS}.
\begin{theorem}
The permutations in $\Sort(\SC_{123})$ are counted by the first differences of Motzkin numbers. That is, 
\begin{equation*}
    |\Sort_n(\SC_{123})| = M_{n+1} - M_{n}.
\end{equation*}
\end{theorem}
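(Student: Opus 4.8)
The plan is to count $\Sort_n(\SC_{123})$ by conditioning on the position $i$ of the largest entry $n$, showing that the resulting count factors as a product of two quantities already established, and then recognizing the total via the Motzkin recurrence of Lemma~\ref{LemMotzkinRecur}.

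Fix $\pi\in S_n$ and let $i$ be the position of $n$, writing $\pi=AnB$ with $A=\pi_1\cdots\pi_{i-1}$ and $B=\pi_{i+1}\cdots\pi_n$. If $\pi\in\Sort_n(\SC_{123})$, then $\pi$ avoids $132$ by Theorem~\ref{ThmSort123}; since $n$ is the largest entry, no entry of $A$ can be smaller than an entry of $B$ (otherwise that entry of $A$, the entry $n$, and that entry of $B$ would form a $132$). Hence $A$ is a permutation of $\{n-i+1,\ldots,n-1\}$ and $B$ is a permutation of $\{1,\ldots,n-i\}$, and it follows that $\pi$ avoids $132$ if and only if $A$ and $B$ both avoid $132$ (no occurrence of $132$ can use entries from both $A$ and $B$). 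I would then prove the key structural equivalence: $\pi\in\Sort_n(\SC_{123})$ if and only if the standardization of $A$ lies in $\Av_{i-1}(132,\underline{321})$ and the word $(n-i+1)B$ lies in $\Sort_{n-i+1}(\SC_{123})$. To establish this, observe that the descending runs of $\pi$ are exactly the descending runs of $A$, followed by the single run obtained by prefixing $n$ to the first descending run of $B$, followed by the remaining descending runs of $B$ (there is an ascent from the last entry of $A$ up to $n$ and a descent from $n$ into $B$). Because $n$ lies to the right of every entry of $A$, any descending run of $A$ of length at least $3$ would violate the first part of the second condition in Theorem~\ref{ThmSort123}; thus $A$ must avoid $\underline{321}$, and conversely if $A$ avoids $\underline{321}$ it has no descending run of length $\geq 3$, so Theorem~\ref{ThmSort123} imposes no restriction on $\pi$ coming from $A$. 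For the run containing $n$ and the runs lying inside $B$, every entry lying to the right of a given entry of $B$ within $\pi$ is again an entry of $B$, so each of the conditions of Theorem~\ref{ThmSort123} on those runs (the "nothing larger to the right" condition and the "consecutive integers" condition) holds for $\pi$ precisely when it holds for $(n-i+1)B$. Together with the $132$ observation, this yields the equivalence.

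Granting the equivalence, Corollary~\ref{Cor123-321} provides $|\Av_{i-1}(132,\underline{321})|=M_{i-1}$ choices for the standardization of $A$, and Lemma~\ref{LemEnum123} provides $M_{n-i}$ choices for the word $(n-i+1)B$, which is exactly a permutation in $\Sort_{n-i+1}(\SC_{123})$ beginning with its largest entry. Summing over $i\in\{1,\ldots,n\}$ and reindexing with $j=i-1$,
\[|\Sort_n(\SC_{123})|=\sum_{i=1}^{n}M_{i-1}M_{n-i}=\sum_{j=0}^{n-1}M_jM_{n-1-j},\]
and the Motzkin recurrence of Lemma~\ref{LemMotzkinRecur} identifies the right-hand side as $M_{n+1}-M_n$, as claimed.

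The routine parts are the pattern-avoidance bookkeeping; the main obstacle is the structural equivalence in the second paragraph, where one must verify that none of the descending-run conditions of Theorem~\ref{ThmSort123} genuinely couples the portion of $\pi$ before $n$ with the portion after it, apart from the single effect that $n$, sitting to the right of all of $A$, forces $A$ to avoid the consecutive pattern $\underline{321}$.
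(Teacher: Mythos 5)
Your proposal is essentially the paper's own proof: both condition on the position $i$ of the entry $n$, decompose $\pi = AnB$, argue that $A$ ranges over $\Av_{i-1}(132,\underline{321})$ (giving $M_{i-1}$ choices via Corollary~\ref{Cor123-321}) while $nB$ standardizes to an element of $\Sort_{n-i+1}(\SC_{123})$ beginning with its largest entry (giving $M_{n-i}$ choices via Lemma~\ref{LemEnum123}), and then sum and apply the Motzkin recurrence. The only difference is that you spell out the verification of the structural factorization in more detail than the paper does; the route is the same.
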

\begin{proof}
Let $X_i^n$ denote the set of permutations in $\Sort_n(\SC_{123})$ in which the $i$th entry is $n$. If $\pi\in X_i^n$, then we may write $\pi = AnB$, where $A$ has length $i-1$ and $B$ has length $n-i$. By Theorem~\ref{ThmSort123}, $AnB$ avoids the pattern $132$, so every entry in $A$ is bigger than every entry in $B$, meaning that $A$ is a permutation of the numbers $n-i+1,\ldots, n-1$ and $B$ is a permutation of $1, \ldots, n-i$. Moreover, the standardization of $A$ can be an arbitrary element of $\Av(132, \underline{321})$, so the number of possibilities for $A$ is $M_{i-1}$ by Corollary~\ref{Cor123-321}. Meanwhile, the standardization of $nB$ can be an arbitrary permutation in $\Sort_{n-i+1}(\SC_{123})$ that starts with $n-i+1$, so by Lemma~\ref{LemEnum123}, there are $M_{n-i}$ possibilities for $nB$. Thus, $|X_i^n| = M_{i-1}M_{n-i}$, and \[|\Sort_n(\SC_{123})| = \sum_{i=1}^{n} M_{i-1}M_{n-i} =  \sum_{i=0}^{n-1} M_{i}M_{n-1-i} = M_{n+1} - M_n.\qedhere\]
\end{proof}

\section{Dynamics of the Maps $s_{132}$ and $s_{312}$}\label{Sec:Dynamicsofs132}

The set $\Sort(s_{132})$ was the primary focus of the recent article \cite{Cerbai2}, where it was shown that $\Sort(s_{132})$ is equal to the set of permutations avoiding the classical pattern $2314$ as well as a certain mesh pattern. The same article proved that $|\Sort_n(s_{132})|=\sum_{k=0}^{n-1}\binom{n-1}{k}C_k$, where $C_k$ denotes the $k$th Catalan number. In this section, we briefly discuss some dynamical properties of the map $s_{132}$ and, by complementation, the map $s_{312}$. 

\begin{theorem}\label{Thm:Classical132Periodic}
The periodic points of the map $s_{132}:S_n\to S_n$ are precisely the permutations in $\Av_n(132,231)$. The periodic points of the map $s_{312}:S_n\to S_n$ are precisely the permutations in $\Av_n(213,312)$. When $n\geq 2$, each of these periodic points has period $2$. Furthermore, for every permutation $\pi\in S_n$, we have $s_{132}^{n-1}(\pi)\in\Av_n(132,231)$ and $s_{312}^{n-1}(\pi)\in\Av_n(213,312)$.  
\end{theorem}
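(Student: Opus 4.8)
The plan is to prove everything for $s_{132}$ and then deduce the $s_{312}$ statements via Lemma~\ref{LemComplement}, since $312 = \comp(132)$ and complementation carries $\Av_n(132,231)$ to $\Av_n(213,312)$. For the core map $s_{132}$, I would first record the structural description of $s_{132}$ analogous to the one used in earlier sections: when a permutation is fed into a classical-$132$-avoiding stack, an entry stays in the stack only as long as the contents (top to bottom) avoid $132$, so the behavior is governed by the left-to-right minima seen so far. Concretely, I expect that if $\pi$ avoids both $132$ and $231$ then every entry enters the stack before any pops, the stack contents never create a classical $132$, and $s_{132}(\pi) = \rev(\pi)$, which again avoids $132$ and $231$; hence such $\pi$ is a periodic point of period $2$ (period $1$ only when $n\le 1$). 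This handles the ``easy'' inclusion: $\Av_n(132,231)$ consists of periodic points.

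The substantive direction is that \emph{every} periodic point lies in $\Av_n(132,231)$, together with the stronger quantitative claim that $s_{132}^{n-1}(\pi)\in\Av_n(132,231)$ for all $\pi$. I would prove the quantitative statement directly, as it subsumes the periodic-point classification (a periodic point equals $s_{132}^{n-1}$ of itself). The approach is to find a monovariant that strictly decreases under $s_{132}$ until the permutation enters $\Av_n(132,231)$, and to show it starts at most $n-1$. A natural candidate, following the flavor of the $\SC_{321}$ analysis with $D(\tau)$ and the $s$-theory of preimages, is something like the position (or a related statistic) of the entry $1$: one shows that applying $s_{132}$ either already yields a permutation avoiding $132$ and $231$, or strictly moves $1$ toward the front (or strictly decreases the number of entries preceding $1$, or the number of ``obstructing'' left-to-right structures). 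Since the relevant statistic is bounded by $n-1$ and drops by at least $1$ each step until stabilization, after $n-1$ iterations we land in $\Av_n(132,231)$, and once there we stay (as $s_{132}$ then acts as $\rev$). An induction on $n$ via deletion of the entry $1$ — as in the proof of Proposition~\ref{Prop:Periodic132}, using that $s_{132}$ commutes with the ``delete $1$ and standardize'' operation once $1$ and $2$ are adjacent — would let me reduce the tail behavior to the smaller case and sharpen the constant to exactly $n-1$.

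The main obstacle I anticipate is two-fold. First, pinning down the correct monovariant for $s_{132}$: unlike West's map $s$, where $1$ moves to the front in one step, the classical-$132$-avoiding stack can leave $1$ stuck behind a descending run of large entries for a while, so the statistic must account for how these ``blocking'' configurations erode under iteration, and proving the strict-decrease lemma will require a careful case analysis of the push/pop dynamics near the top of the stack (exactly the kind of three-entry relative-order bookkeeping used in Lemma~\ref{lm: swap}). Second, getting the bound to be exactly $n-1$ rather than something weaker like $2n$ or $O(n)$: this sharpness is where the induction on $n$ must be set up so that one iteration of $s_{132}$ ``uses up'' precisely one unit of the monovariant and reduces to an instance of size $n-1$; I would want a matching family of examples (a permutation requiring the full $n-1$ iterations, e.g. something like $23\cdots n1$ or a layered variant) to confirm the bound cannot be improved, mirroring the remark that these results are harder than for West's map. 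If the clean monovariant proves elusive, a fallback is to track the eventual periodic orbit directly: show $\lim$-type stabilization of the prefix and the position of $1$ simultaneously, as in the nested sequences $(\SC_{321}^{2t}(\pi)_1)$ and $(\SC_{321}^{2t}(\pi)_2)$ argument in Corollary~\ref{Lem dec3}, and then bound the number of steps to stabilization by $n-1$ via the deletion induction.
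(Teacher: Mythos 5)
Your overall strategy is aligned with the paper's: reduce $s_{312}$ to $s_{132}$ via complementation, observe that $s_{132}$ acts as $\rev$ on $\Av(132,231)$ (settling the easy inclusion and the period-$2$ claim), and then prove the quantitative bound $s_{132}^{n-1}(\pi)\in\Av_n(132,231)$ directly by an induction that deletes the entry $1$. Your remark that $s_{132}$ commutes with ``delete $1$ and standardize'' once $1$ and $2$ are adjacent is indeed the engine of the paper's argument.

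However, your proposal's primary framing --- find a monovariant such as ``the position of $1$'' that strictly decreases each step --- would not survive contact with the map. The entry $1$ does not monotonically migrate under $s_{132}$; for instance $s_{132}(321)=123$ moves $1$ from position $3$ to position $1$, while $s_{132}(132)=231$ moves it from $1$ to $3$. What actually makes the deletion induction close is a sharper structural fact that your sketch does not isolate: when any $\tau\in S_n$ is sent through a classical-$132$-avoiding stack, the entry $1$ enters the stack only after the stack contents have been popped down to an increasing sequence (top to bottom), and $1$ then remains in the stack until the input is exhausted. Consequently the \emph{last ascending run} of $s_{132}(\tau)$ begins with $1$. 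From there one shows that whenever a permutation's last ascending run begins with $1$, the entries $1$ and $2$ are consecutive in its image, and they then remain consecutive under all further iterations. This yields the clean two-layer induction: one application of $s_{132}$ forces the ``last ascending run begins with $1$'' property, and then $n-2$ further applications reduce to the $(n-1)$-case via the deletion map, for a total of $n-1$. Without pinning down this particular observation, neither the monovariant approach nor the fallback ``stabilization'' sketch as written gives the exact bound $n-1$; they give at best something weaker or leave the inductive step unclosed. (Also, the discussion of extremal examples is not needed for this theorem --- tightness of the bound $n-1$ is the content of the separate Theorem~\ref{Thm:MaxIterations}.)
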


\begin{proof}
By Lemma~\ref{LemComplement}, it suffices to prove the desired properties of $s_{132}$. Note that if we have $\pi \in \Av(132,231)$, then $s_{132}(\pi) = \rev(\pi) \in \Av(132, 231)$, so $s_{132}^2(\pi) = \pi$. Hence, $\pi$  is a periodic point of $s_{132}$, and the period is $2$ if $n\geq 2$. 

It remains to prove that $s_{132}^{n-1}(\pi)\in\Av(132,231)$ for every $\pi\in S_n$. This is trivial if $n=1$, so we may assume $n\geq 2$ and induct on $n$. Consider sending a permutation $\tau\in S_n$ through a classical-$132$-avoiding stack. When the entry $1$ enters the stack, the entries below it in the stack must appear in increasing order from top to bottom. Furthermore, the entry $1$ will not leave the stack until after all entries have entered the stack. This implies that the last ascending run of $s_{132}(\tau)$ begins with the entry $1$. Therefore, it suffices to prove that if $\pi\in S_n$ is a permutation whose last ascending run begins with $1$, then $s_{132}^{n-2}(\pi)\in\Av(132,231)$. We prove this by induction on $n$, noting first that it is trivial if $n=2$. Now suppose $n\geq 3$.   

For each $\tau\in S_n$, let $\tau^*$ be the permutation in $S_{n-1}$ obtained by deleting the entry $1$ from $\tau$ and then decreasing all remaining entries by $1$. Observe that if the entries $1$ and $2$ appear consecutively in $\tau$, then they also appear consecutively in $s_{132}(\tau)$ and that $(s_{132}(\tau))^*=s_{132}(\tau^*)$. By induction, we have $s_{132}^{n-2}(\tau^*)\in\Av(132,231)$. If the last ascending run of $\tau$ begins with $1$, then the above analysis of the classical-$132$-avoiding stack shows that the entries $1$ and $2$ appear consecutively in $s_{132}(\tau)$. It then follows that the entries $1$ and $2$ appear consecutively in $s_{132}^{n-2}(\tau)$ and that $(s_{132}^{n-2}(\tau))^*=s_{132}^{n-2}(\tau^*)\in\Av(132,231)$. This implies that $s_{132}^{n-2}(\tau)\in\Av(132,231)$, as desired. 
\end{proof}

Whenever one is confronted with a noninvertible finite dynamical system, it is natural to ask for the maximum number of iterations of the map needed to send every point to a periodic point. For example, Knuth \cite[pages 106--110]{Knuth2} studied this problem for the classical bubble sort map. For West's stack-sorting map $s$, permutations requiring close to the maximum number of iterations to get sorted into the identity were studied by West and Claesson--Dukes--Steingr\'imsson\cite{Claessonn-4, West}. Ungar gave a rather nontrivial argument proving that the maximum number of iterations needed to sort a permutation in $S_n$ using the pop-stack-sorting map is $n-1$ \cite{Ungar}, and the permutations requiring $n-1$ iterations were investigated further by Asinowski, Banderier, and Hackl \cite{Asinowski}. Some other papers that have studied similar questions for other finite dynamical systems include 
\cite{Toom, Hobby, Bentz, Igusa, Etienne, Griggs}.

Our second result in this section will determine the maximum number of iterations of $s_{132}$ (equivalently, $s_{312}$) needed to send a permutation to a periodic point. In what follows, we let $\sd_{132}(\pi)$ denote the smallest nonnegative integer $t$ such that $s_{132}^t(\pi)\in\Av(132,231)$. Note that $\sd_{132}(\pi)$ is also the smallest nonnegative integer $t$ such that $s_{312}^t(\comp(\pi))\in\Av(213,312)$. 

\begin{theorem}\label{Thm:MaxIterations}
For $n\geq 3$, we have \[\max_{\pi\in S_n}\sd_{132}(\pi)=n-1.\] 
\end{theorem}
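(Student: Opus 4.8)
The plan is to establish the two inequalities $\max_{\pi\in S_n}\sd_{132}(\pi)\le n-1$ and $\max_{\pi\in S_n}\sd_{132}(\pi)\ge n-1$ separately. For the upper bound, I would refine the argument from the proof of Theorem~\ref{Thm:Classical132Periodic}. That proof already shows, via the substitution $\tau\mapsto\tau^*$ (delete the entry $1$, decrease remaining entries by $1$), that $s_{132}^{n-1}(\pi)\in\Av(132,231)$ for all $\pi\in S_n$, which gives $\sd_{132}(\pi)\le n-1$ immediately. So the upper bound is essentially free from what precedes, and I would simply invoke it.

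The substance is the lower bound: exhibiting, for each $n\ge 3$, a permutation $\pi\in S_n$ with $\sd_{132}(\pi)=n-1$. The natural candidate is a permutation designed so that one iteration of $s_{132}$ makes ``almost no progress'' toward $\Av(132,231)$. Guided by the mechanics in the proof above — the entry $1$ only escapes the $132$-avoiding stack after everything has entered, and $1$ merges with $2$ into a consecutive block only once the last ascending run begins with $1$ — I would track the statistic $D(\tau)$ = number of entries strictly between $1$ and $2$ in $\tau$ (as used in the proof of Proposition~\ref{Prop:Periodic132}), or more precisely track how slowly the ``bad'' structure unwinds. I expect the extremal permutation to be something like a nearly-decreasing permutation, e.g. $\pi = n\,(n-1)\cdots 3\,1\,2$ or a small perturbation thereof: one checks that $s_{132}$ applied to such a permutation yields another permutation of the same shape on $n-1$ of its entries, so that exactly one entry gets ``absorbed'' per iteration and $n-1$ iterations are required to reach a reverse-sorted-then-sorted periodic point. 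I would compute $s_{132}(\pi)$ explicitly for the candidate, verify it has the same form with the relevant parameter decreased by exactly $1$, and conclude by an easy induction that $\sd_{132}$ of the candidate is exactly $n-1$ (and that after fewer iterations the image still contains $132$ or $231$).

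The hard part will be pinning down the exact extremal family and proving the ``decreases by exactly one'' behavior rigorously, since $s_{132}$ is the \emph{classical}-pattern-avoiding stack map rather than a consecutive one, so the simple ``everything enters then everything reverses'' description used throughout Sections~\ref{Sec:132and312}--\ref{Sec:123and321} does not apply verbatim; one must genuinely simulate the classical-$132$-avoiding stack. A clean way to organize this is to find an invariant description of $s_{132}^t(\pi)$ for the chosen $\pi$ — e.g. showing $s_{132}^t(\pi)$ is obtained from a reverse-layered-type skeleton with one fewer ``layer'' at each step — and to note that such a permutation lies in $\Av(132,231)$ precisely when the skeleton has collapsed, which happens at $t=n-1$ and no earlier. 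I would also use the already-established fact that the periodic points are exactly $\Av_n(132,231)$ and that these have period $2$, so ``reaching a periodic point'' is literally ``landing in $\Av(132,231)$,'' making $\sd_{132}$ the right quantity to analyze. Finally, a small case check at $n=3$ (and perhaps $n=4$) would anchor the induction and confirm the candidate.
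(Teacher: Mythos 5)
The upper-bound half of your plan is fine and matches the paper: $\sd_{132}(\pi)\le n-1$ follows immediately from the last assertion of Theorem~\ref{Thm:Classical132Periodic}, and you correctly defer to that. The problem is the lower bound, where your proposal has a genuine gap rather than merely a missing calculation.

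Your tentative candidate $\pi=n\,(n-1)\cdots 3\,1\,2$ is already a periodic point. It is a decreasing sequence followed by an increasing sequence, so it lies in $\Av(132,231)$, and hence $\sd_{132}(\pi)=0$, not $n-1$. (Already at $n=3$, $\pi=312\in\Av(132,231)$.) The same is true of every "nearly decreasing" permutation of that shape, so this is not a case of choosing the right small perturbation: the whole family is wrong. You flag that "pinning down the exact extremal family" is the hard part, but your heuristic — track $D(\tau)$ and look for one entry to be absorbed per step — does not produce a working candidate, and the invariant you would need is not of that form. The paper instead takes $\lambda_m=\xi_m\xi_{m-1}\cdots\xi_1$ with $\xi_j=(3j-2)(3j)(3j-1)$, i.e.\ a skew sum of $m$ copies of $132$ (prefixed by $n$ or $(n-1)\,n$ to handle $n\not\equiv 0\pmod 3$). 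The key structural content is then two explicit computational claims: after four iterations one reaches a canonical "V-shaped" form $D^{(2)}_{1,m}D^{(0)}_{1,m}V_1\,\rev\bigl(D^{(1)}_{1,m}\bigr)$, and each further block of three iterations advances the index $k$ by one while preserving this form, until $s_{132}^{n-2}(\pi)=(3m-1)(3m)V_{m-1}$, which still contains $132$. That is a three-steps-per-layer unwinding, not a one-step-per-entry shrinkage, so even the qualitative shape of the induction you sketch would have to change. In short: the reduction to finding an extremal family is correct, but the family you propose provably fails, and the mechanism by which the true extremal family resists sorting is different from the one you conjecture.
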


\begin{proof}
Theorem~\ref{Thm:Classical132Periodic} tells us that $\displaystyle\max_{\pi\in S_n}\sd_{132}(\pi)\leq n-1$, so it suffices to find a permutation $\pi\in S_n$ such that $\sd_{132}(\pi)\geq n-1$. Let $n=3m+r$, where $r\in\{0,1,2\}$. Let $\lambda_m$ denote the permutation in $S_{n-r}$ obtained by taking the skew sum of $m$ copies of the permutation $132$. In other words, $\lambda_m=\xi_m\xi_{m-1}\cdots\xi_1$, where $\xi_j$ denotes the sequence $(3j-2)(3j)(3j-1)$. Let \[\pi=\begin{cases} \lambda_m & \mbox{if } r=0; \\  n\lambda_m & \mbox{if } r=1; \\  (n-1)n\lambda_m & \mbox{if } r=2. \end{cases}\] We claim that $s_{132}^{n-2}(\pi)\not\in\Av(132,231)$; that is, $\sd_{132}(\pi)\geq n-1$. For the sake of simplicity, we will prove this in the case $r=0$; the proofs in the cases $r=1$ and $r=2$ are similar. Thus, $n=3m$ and $\pi=\lambda_m$. If $n=3$, then we are done because $s_{132}(\pi)=s_{132}(132)=231\not\in\Av(132,231)$. Therefore, we may assume $n=3m\geq 6$.

It will be helpful to introduce a little more notation. If $k\geq 1$ is odd (respectively, even), we let $V_k$ denote the permutation in $S_{3k}$ obtained by listing the odd (respectively, even) numbers in $[3k+1]$ in decreasing order and then listing the even (respectively, odd) numbers in $[3k+1]$ in increasing order. For example, we have $V_1=3124$, $V_2=6421357$, and $V_3=9\,7\,5\,3\,1\,2\,4\,6\,8\,10$. For $i\in\{0,1,2\}$, let $D_{k,m}^{(i)}$ be the sequence obtained by writing the elements of $\{3k+2,3k+3,\ldots,3m\}$ that are congruent to $i$ modulo $3$ in decreasing order. For example, $D_{2,5}^{(0)}=15\,12\,9$, $D_{2,5}^{(1)}=13\,10$, and $D_{2,5}^{(2)}=14\,11\,8$. The following two claims can be verified in a straightforward manner using nothing more than the definition of the map $s_{132}$; we leave this verification to the reader. 

\noindent{\bf Claim 1:} We have $s_{132}^4(\lambda_m)=D_{1,m}^{(2)}D_{1,m}^{(0)}V_1\rev\left(D_{1,m}^{(1)}\right)$.

\noindent{\bf Claim 2:} For $1\leq k\leq m-2$, we have \[s_{132}^3\left(D_{k,m}^{(2)}D_{k,m}^{(0)}V_k\rev\left(D_{k,m}^{(1)}\right)\right)=D_{k+1,m}^{(2)}D_{k+1,m}^{(0)}V_{k+1}\rev\left(D_{k+1,m}^{(1)}\right).\] 

These two claims imply that $s_{132}^{n-2}(\pi)=s_{132}^{3m-2}(\lambda_m)=D_{m-1,m}^{(2)}D_{m-1,m}^{(0)}V_{m-1}\rev\left(D_{m-1,m}^{(1)}\right)$. Note that $D_{m-1,m}^{(2)}$ consists of the single entry $3m-1$. Similarly, $D_{m-1,m}^{(0)}$ consists of the single entry $3m$. Finally, $D_{m-1,m}^{(1)}$ is empty. Thus, $s_{132}^{n-2}(\pi)=(3m-1)(3m)V_{m-1}\not\in\Av(132,231)$. 
\end{proof}

\section{Future Directions}\label{Sec:Conclusion}

We have initiated the study of the consecutive-pattern-avoiding stack-sorting maps $\SC_{\sigma}$ and analyzed them as both dynamical systems and sorting procedures. These maps are generalizations of West's stack-sorting map \cite{West} and are variants of the classical-pattern-avoiding stack-sorting maps studied by Cerbai, Claesson, and Ferrari in \cite{Cerbai}. 

Our main results on $\SC_{\sigma}$ as a sorting procedure were the characterization of when $\Sort(\SC_{\sigma})$ is a permutation class and the enumeration of $\Sort(\SC_{\sigma})$ summarized in the table below.

\setlength\belowrulesep{0pt}
\setlength\aboverulesep{0pt}
\begin{center}
\footnotesize{
\begin{tabularx}{.98\textwidth}{X|XXXXXXXXXXX}
 $\sigma$ $\backslash$ $n$ & 0 & 1 & 2 & 3 & 4 & 5 & 6 & 7 & 8 & 9 & OEIS \\
 \hline  \\[-0.25cm]
 $123$ & 1 & 1 & 2 & 5 & 12 & 30 & 76 & 196 & 512 & 1353 & A002006\\ [0.1cm] 
 $132$ &  1 & 1 & 2 & 5 & 14 & 42 & 132 & 429 & 1430 & 4862 & A000108\\ [0.1cm]
 $213$ &  1 & 1 & 2 & 5 & 15 & 50 & 180 & 686 & 2731 & 11254 & unknown \\ [0.1cm]
 $231$ &  1 & 1 & 2 & 6 & 21 & 79 & 311 & 1265 & 5275 & 22431 & unknown\\ [0.1cm]
 $312$ &  1 & 1 & 2 & 5 & 15 & 50 & 179 & 675 & 2649 & 10734 & unknown \\ [0.1cm]
 $321$ &  1 & 1 & 2 & 4 & 9 & 21 & 51 & 127 & 323 & 835 & A001006\\ [0.1cm]
\end{tabularx}
}
\end{center}
It would be interesting to have nontrivial information about the asymptotic behavior of the unknown sequences in this table. Recall that we also have Conjecture~\ref{Conj:231Sortable}, which states that $\Sort(\SC_{231})$ is enumerated by the binomial transform of Fine's sequence (OEIS sequence A033321). 

From the dynamical point of view, we first proved Theorem~\ref{thm: periodicmaintheorem}, which characterized the periodic points of $\SC_\sigma$ for each $\sigma\in S_3$. This theorem has motivated us to formulate the following conjecture, which we have additionally confirmed when $k=4$ and $n\leq 8$.

\begin{conjecture}\label{Conj:PeriodicMain}
Let $\sigma\in S_k$ for some $k\geq 3$. The periodic points of the map $\SC_\sigma:S_n\to S_n$ are precisely the permutations in $\Av_n\left(\underline{\sigma},\underline{\rev(\sigma)}\right)$.
\end{conjecture}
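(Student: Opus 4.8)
The plan is to follow the two-part structure used for $\sigma\in S_3$ in Theorem~\ref{thm: periodicmaintheorem}: prove the easy inclusion directly, and reduce the hard inclusion to the assertion that a permutation containing $\underline{\rev(\sigma)}$ cannot be periodic, which one then attacks by a monovariant together with an induction on $n$.

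For the easy inclusion, argue exactly as in Proposition~\ref{Prop:Periodic132}. If $\pi\in\Av_n(\underline{\sigma},\underline{\rev(\sigma)})$, then $\pi$ avoids $\rev(\sigma)$ consecutively, so every entry of $\pi$ enters the stack and the entries leave it in the reverse of the order in which they entered; hence $\SC_\sigma(\pi)=\rev(\pi)$. Since $\rev(\pi)$ again lies in $\Av_n(\underline{\sigma},\underline{\rev(\sigma)})$, we get $\SC_\sigma^2(\pi)=\pi$, so $\pi$ is periodic (of period $2$ when $n\ge2$, as no permutation of length at least $2$ equals its reverse). Thus $\Av_n(\underline{\sigma},\underline{\rev(\sigma)})$ is contained in the set of periodic points.

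For the hard inclusion it suffices to prove that if $\pi$ contains $\underline{\rev(\sigma)}$ then $\pi$ is not a periodic point of $\SC_\sigma$. Granting this, any periodic $\pi$ avoids $\underline{\rev(\sigma)}$, so $\SC_\sigma(\pi)=\rev(\pi)$; but $\rev(\pi)$ is again periodic, hence also avoids $\underline{\rev(\sigma)}$, which is to say $\pi$ avoids $\underline{\sigma}$, and therefore $\pi\in\Av_n(\underline{\sigma},\underline{\rev(\sigma)})$. Before proving the key assertion, use Lemma~\ref{LemComplement} (which makes $\SC_\sigma$ and $\SC_{\comp(\sigma)}$ topologically conjugate via $\comp$) to replace $\sigma$ by $\comp(\sigma)$ if necessary: since the second entry of $\comp(\sigma)$ is $k+1-\sigma_2$, a one-line check shows we may assume $\sigma_2\ne 1$, and when $k\ge4$ we may even assume $\sigma_2\ge3$. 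This normalization matters because the entries $1$ and $2$ of any input are its two globally smallest entries, so in a forbidden window they can only play the roles of the two smallest entries of $\sigma$; when $\sigma_2\ge3$ this forces the entries $1$ and $2$ never to be popped before all input entries have entered the stack. (When $k=3$ the conjecture is exactly Theorem~\ref{thm: periodicmaintheorem}, so the genuinely new content is $k\ge4$, where $\sigma_2\ge3$ is always attainable.)

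The core step is thus: if $\pi$ contains $\underline{\rev(\sigma)}$ — equivalently, if at least one entry of $\pi$ is popped before all input entries enter the stack — then $\pi$ is not periodic. The template to imitate is the $\sigma\in\{132,231\}$ case of Proposition~\ref{Prop:Periodic132}: track the number $D(\pi)$ of entries lying strictly between the entries $1$ and $2$, show $D$ is weakly monotone under $\SC_\sigma$ and strictly drops within boundedly many iterations whenever positive, deduce that $1$ and $2$ become adjacent on a periodic orbit, delete the entry $1$ (which commutes with $\SC_\sigma$ once $1$ and $2$ are adjacent, as an adjacent pair $\{1,2\}$ behaves like a single small entry) to induct on $n$, and note that reinserting a new minimum beside the old minimum preserves $\Av(\underline{\sigma},\underline{\rev(\sigma)})$. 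The main obstacle — and, I expect, the heart of the problem — is that this monovariant does not survive to $k\ge4$: for $\sigma=1243$ the permutation $1\,n\,(n-1)\cdots2$ is a genuine periodic point of period $2$ with $D=n-2$, so on a periodic orbit $D$ need not tend to $0$ and the ``delete $1$'' induction cannot be set up in this form. The structural reason is that when $k=3$ a maximal block of entries flanked by the two smallest values and containing only larger values automatically contains a consecutive occurrence of $\sigma$ or $\rev(\sigma)$ (the length-$3$ patterns with an interior peak), whereas for $k\ge4$ such a block may be too short to contain any length-$k$ pattern, or may avoid both $\underline{\sigma}$ and $\underline{\rev(\sigma)}$ altogether. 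A workable proof will therefore need a substitute invariant; natural candidates are (i) a weighted count of consecutive occurrences of $\rev(\sigma)$, argued to decrease because each application of $\SC_\sigma$ destroys the leftmost such occurrence (it is what triggers a pop) while new occurrences can only be produced by the terminal reversal of the stack, or (ii) an explicit block decomposition of $\SC_\sigma$ generalizing the ascending-run formula of Lemma~\ref{LemSC321 map}, from which a conserved or monotone quantity can be extracted. Once such an invariant is in hand, the remaining bookkeeping — commutation with deletion of the entry $1$, stability of the avoidance class under reinsertion, and the finitely many small base cases — should be routine.
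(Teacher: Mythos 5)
There is a genuine gap, and it is worth noting at the outset that the statement you are proving is stated in the paper as Conjecture~\ref{Conj:PeriodicMain}, i.e.\ as an \emph{open problem}: the authors prove it only for $k=3$ (Theorem~\ref{thm: periodicmaintheorem}) and report merely a computational check for $k=4$, $n\leq 8$. So there is no proof in the paper to compare yours against, and a complete argument here would be new mathematics. Your easy inclusion is correct (avoidance of $\underline{\rev(\sigma)}$ gives $\SC_\sigma(\pi)=\rev(\pi)$, and the class $\Av\left(\underline{\sigma},\underline{\rev(\sigma)}\right)$ is reversal-closed), and so is the reduction of the hard inclusion to the assertion that a permutation containing $\underline{\rev(\sigma)}$ cannot be periodic. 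Your normalization $\sigma_2\geq 3$ for $k\geq 4$ via Lemma~\ref{LemComplement}, and the consequence that $1$ and $2$ are never popped prematurely, also check out.

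The problem is that the core step is never proved. You correctly diagnose that the monovariant $D$ from Proposition~\ref{Prop:Periodic132} fails for $k\geq 4$ (your example $\sigma=1243$, $\pi=1\,n\,(n-1)\cdots 2$ is right), but the two substitutes you offer are only named, not developed. Candidate (i) is dubious as stated: the number of consecutive occurrences of $\rev(\sigma)$ is not monotone under $\SC_\sigma$ in general, since the output $a_1^m\cdots a_k^m$-type prefix together with the reversed stack can create many new occurrences, and "destroys the leftmost occurrence" is not a well-defined claim about the output permutation. Candidate (ii) is a research direction, not an argument. Likewise, the "routine bookkeeping" you defer --- that deleting the entry $1$ commutes with $\SC_\sigma$ once $1$ and $2$ are adjacent --- is not routine for $k\geq 4$: a window of length $k$ can contain $1$ without containing $2$, so the adjacent pair does not simply "behave like a single small entry," and this commutation itself needs proof. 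As written, the proposal is a (thoughtful and honestly flagged) plan of attack on an open conjecture, not a proof of it.
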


We then asked for the maximum value of $|\SC_{\sigma}^{-1}(\pi)|$ for $\pi\in S_n$. The following table summarizes what we know about these values; note that by Lemma~\ref{LemComplement}, we only need to consider one pattern from each of the pairs $\{123,321\}$, $\{132,312\}$, $\{213,231\}$. 
\begin{center}
\footnotesize{
\begin{tabularx}{\textwidth}{ X|XXXXXXXXXX} 
 $\sigma$ $\backslash$ $n$  & 1 & 2 & 3 & 4 & 5 & 6 & 7 & 8 & 9 & OEIS\\
 \hline \\[-0.25cm]
 $123$ & 1 & 1 & 2 & 3 & 4 & 7 & 11 & 16 & 26 & unknown\\ [0.1cm]
 $132$ & 1 & 1 & 2 & 3 & 6 & 10 & 20 & 35 & 70 & A001405\\ [0.1cm]
 $231$ & 1 & 1 & 2 & 4 & 8 & 16 & 32 & 64 & 128 & A011782\\ [0.1cm]
\end{tabularx}
}
\end{center}
It would be interesting to have nontrivial estimates for the numbers $\displaystyle\max_{\pi\in S_n}|\SC_{123}^{-1}(\pi)|$ appearing in the first row of the above table. 

In Section~\ref{Sec:Dynamicsofs132}, we studied the maximum number of iterations of $s_{132}$ (equivalently, $s_{312}$) needed to send a permutation to a periodic point. This line of investigation is completely open for the consecutive-pattern-avoiding stack-sorting maps. However, we do have the following conjecture, which we have verified for $n\leq 9$. Recall that the set of periodic points of $\SC_{231}:S_n\to S_n$ is $\Av_n(\underline{132},\underline{231})=\Av_n(132,231)$.

\begin{conjecture}
Let $n\geq 3$. We have $\SC_{231}^{2n-4}(\pi)\in\Av_n(132,231)$ for every $\pi\in S_n$. Furthermore, there exists a permutation $\tau\in S_n$ such that $\SC_{231}^{2n-5}(\tau)\not\in\Av_n(132,231)$. 
\end{conjecture}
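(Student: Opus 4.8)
The periodic points of $\SC_{231}\colon S_n\to S_n$ are, by Proposition~\ref{Prop:Periodic132}, exactly the permutations in $\Av_n(132,231)$, i.e., those of the form (strictly decreasing run)(strictly increasing run); call these \emph{valley permutations}. The plan is to prove the upper bound $\SC_{231}^{2n-4}(\pi)\in\Av_n(132,231)$ by induction on $n$ and to prove the sharpness statement by exhibiting an explicit family. Both parts rest on the description of $\SC_{231}$ recorded in Section~\ref{Sec:213and231}: $\SC_{231}(\tau)$ is obtained from $\tau$ by moving its premature entries to the front (preserving their relative order) and reversing the block of remaining entries. From this I would extract two facts to be used throughout: the first entry $\tau_1$ is never premature (it sits at the bottom of the stack), so $\SC_{231}(\tau)$ always ends in $\tau_1$; and the entry $1$ is never premature.

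For the upper bound, I would combine the induction with the statistic $D(\tau)$ used in the proof of Proposition~\ref{Prop:Periodic132} — the number of entries lying strictly between $1$ and $2$ in $\tau$ — recalling that $t\mapsto D(\SC_{231}^t(\pi))$ is weakly decreasing and strictly decreases every two steps while positive. When $D=0$, so that $1$ and $2$ are adjacent, deleting the entry $1$ and standardizing commutes with $\SC_{231}$, and one checks directly that the resulting permutation on $n-1$ letters is a valley permutation if and only if the original one is; this yields a reduction to $S_{n-1}$. The delicate point — which I expect to be the main obstacle — is the step count: naively adding the at most $2(n-2)$ steps needed to reach $D=0$ to the inductive budget $2(n-1)-4$ overshoots $2n-4$ badly, so one needs a simultaneous induction (on $n$ together with a secondary parameter such as $D$, or equivalently a carefully chosen potential function on $S_n$ taking values in $\{0,1,\dots,2n-4\}$) showing that a single application of $\SC_{231}$ makes progress on both fronts at once. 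This is the same sort of two-level argument that was needed for the harder analogue Theorem~\ref{Thm:MaxIterations}, and I would model the proof on it.

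For the sharpness statement, I would construct an explicit permutation $\tau=\tau^{(n)}\in S_n$ whose $\SC_{231}$-orbit first lands in $\Av_n(132,231)$ after exactly $2n-4$ steps. The small cases $\tau^{(3)}=231$, with orbit $231\mapsto 132\mapsto 321$, and $\tau^{(4)}=4231$, with orbit $4231\mapsto 1324\mapsto 3421\mapsto 1243\mapsto 4321$, suggest that such a family should generalize the patterns $231$ and $4231$ in a recursive fashion; pinning down the correct $\tau^{(n)}$ is itself part of the work. Once the family is in hand, I would prove a short list of claims giving the successive iterates $\SC_{231}^j(\tau^{(n)})$ in closed form, each verified directly from the premature/reversal description of $\SC_{231}$ (exactly as Claims~1 and~2 function in the proof of Theorem~\ref{Thm:MaxIterations}), and then read off from these formulas that $\SC_{231}^{2n-5}(\tau^{(n)})\notin\Av_n(132,231)$ while $\SC_{231}^{2n-4}(\tau^{(n)})\in\Av_n(132,231)$.
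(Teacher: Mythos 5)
The statement you are trying to prove is posed as an open \emph{conjecture} in the paper; the authors report only that they have verified it computationally for $n\leq 9$ and offer no argument. So there is no proof in the paper to compare yours against, and the task here is simply to assess whether your proposal constitutes a proof on its own.

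It does not, and you yourself flag the two gaps. For the upper bound, the reduction via $D(\tau)$ (the number of entries between $1$ and $2$) is the right starting point, but the budget doesn't close: $D(\pi)$ can be as large as $n-2$, so the $2D(\pi)$ iterations needed to make $1$ and $2$ adjacent, added to the inductive budget $2(n-1)-4$, gives up to $4n-10$, far exceeding $2n-4$. You say one needs a "simultaneous induction'' or a "carefully chosen potential function,'' but no such potential function is exhibited, and it is not obvious that one application of $\SC_{231}$ makes the required compound progress. There is also a subtle point you gloss over: even once $1$ and $2$ are adjacent and the deleted-and-standardized permutation $\pi^*$ becomes a valley permutation, you still need to argue that the full permutation is a valley permutation; this requires controlling where the entry $1$ sits relative to the descending and ascending runs, and that is not automatic (it worked in the proof of Proposition~\ref{Prop:Periodic132} only because the goal there was stabilization, not a step count). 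For the sharpness direction, you verify $n=3$ and $n=4$ correctly, but you explicitly leave the family $\tau^{(n)}$ undetermined and the closed-form iterate claims unproved, so this half is a plan rather than an argument. In short, your outline identifies the natural tools (the $D$-statistic, an explicit extremal family verified via the premature-entry description of $\SC_{231}$, the shape of the argument from Theorem~\ref{Thm:MaxIterations}) but neither half is carried through, which is consistent with the fact that the paper's authors also left this as a conjecture.
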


In \cite{DefantFertility}, the first author showed that the numbers $3,7,11,15,19,23$ are \emph{infertility numbers}, meaning that there does not exist a permutation $\pi$ such that $|s^{-1}(\pi)|\in\{3,7,11,15,19,23\}$. By contrast, we have the following conjecture for the maps $\SC_{\sigma}$ with $\sigma\in S_3$. 

\begin{conjecture}
For every $\sigma\in S_3$ and every positive integer $f$, there exists a permutation $\pi$ such that $|\SC_{\sigma}^{-1}(\pi)|=f$.  
\end{conjecture}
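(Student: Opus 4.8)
The plan is to treat the three complementation-pairs $\{132,312\}$, $\{213,231\}$, and $\{123,321\}$ separately. By Lemma~\ref{LemComplement} we have $|\SC_{\comp(\sigma)}^{-1}(\pi)|=|\SC_\sigma^{-1}(\comp(\pi))|$ for every permutation $\pi$, and $\comp$ is a bijection of $S_n$; hence the set of fertility values attained by $\SC_\sigma$ equals the set attained by $\SC_{\comp(\sigma)}$, and it suffices to realize every positive integer $f$ as a fertility for one representative from each pair, say $\SC_{132}$, $\SC_{231}$, and $\SC_{321}$.

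For $\SC_{132}$ (hence also $\SC_{312}$) the conjecture follows immediately from work already done. Let $\pi=n\,1\,2\cdots(n-1)\in S_n$; this permutation is reverse-layered, with corresponding word $A\,D\underbrace{A\cdots A}_{n-2}$ over $\{A,D\}$. Then $d_\pi(0)=d_\pi(1)=0$ and $d_\pi(k)=1$ for $k\ge 2$, while $a_\pi(0)=a_\pi(1)=n-2$ and $a_\pi(k)=0$ for $k\ge 2$; in particular $d_\pi(k)+a_\pi(k)<k$ for all $k\ge 2$, so Theorem~\ref{th: layer preim} gives $|\SC_{132}^{-1}(\pi)|=\binom{n-2}{0}+\binom{n-2}{1}=n-1$. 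Choosing $n=f+1$ produces a permutation in $S_{f+1}$ with exactly $f$ preimages under $\SC_{132}$.

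For $\SC_{231}$ (hence also $\SC_{213}$) I would build on the description of the fibers of $\SC_{231}$ from the proof in Section~\ref{Sec:213and231}: a preimage of $\pi=\pi_1\cdots\pi_n$ is determined by the set $J\subseteq\{2,\ldots,n-1\}$ of positions of its premature entries, and $|\SC_{231}^{-1}(\pi)|$ is the number of \emph{admissible} such sets $J$, where admissibility means that the reconstructed word is indeed sent to $\pi$ by $\SC_{231}$ with exactly the entries in those positions (in the correct order) being premature. For the decreasing permutation every $J$ is admissible, giving the value $2^{n-2}$. To reach intermediate values, I would prove a structural lemma expressing $\SC_{231}^{-1}$ of a skew sum (or of another operation that interacts cleanly with the push/pop dynamics) in terms of the fibers of the two summands, so that fertilities combine additively or multiplicatively, and then realize a given $f$ by a suitable ``mostly decreasing'' permutation carrying a short controlled block. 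The alternative I would try first is to show that for each $n$ the set of fertilities attained inside $S_n$ is exactly $\{0,1,\ldots,2^{n-2}\}$: starting from the decreasing permutation, one repeatedly performs a local modification that renders precisely one previously admissible set $J$ inadmissible, thereby decreasing the fertility by $1$; since $2^{n-2}\to\infty$, every $f$ is realized.

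For $\SC_{321}$ (hence also $\SC_{123}$) the map has a transparent closed form — by Lemma~\ref{LemSC321 map}, $\SC_{321}(\pi)=a_1^m\cdots a_k^m\,\rev(a_k^e)\cdots\rev(a_1^e)$ — so I would attempt to invert this formula on a tractable family, such as permutations with only two ascending runs or an analogue of reverse-layered permutations adapted to the ascending-run decomposition, and read off a simple closed count. I expect this to be the main obstacle: unlike the $\SC_{132}$ case there is no known formula for $|\SC_{321}^{-1}(\pi)|$, nor even for $\max_{\pi\in S_n}|\SC_{321}^{-1}(\pi)|$ (see the last table of Section~\ref{Sec:Conclusion}), so there is no binomial-coefficient family to read off. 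Failing an explicit family, the fallback is again interpolation: prove a ``decrement lemma'' producing from any permutation of fertility $g\ge 2$ another permutation of fertility $g-1$, and combine it with the fact that $\max_{\pi\in S_n}|\SC_{321}^{-1}(\pi)|\to\infty$. Establishing such a decrement lemma uniformly — for $\SC_{231}$ and $\SC_{321}$ simultaneously — is, I believe, the single hardest ingredient and the place where a genuinely new idea would be needed.
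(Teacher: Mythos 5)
This statement is a \emph{conjecture} in the paper; the authors offer no proof, so there is nothing to compare you against. What you have produced is a partial result plus a research plan, and you are commendably explicit about which parts are which.

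Your argument for the complementation pair $\{132,312\}$ is correct and complete. The permutation $\pi=n\,1\,2\cdots(n-1)$ is indeed reverse-layered with word $AD\,A^{\,n-2}$; the computation $d_\pi(0)=d_\pi(1)=0$, $d_\pi(k)=1$ for $k\ge 2$, $a_\pi(0)=a_\pi(1)=n-2$, $a_\pi(k)=0$ for $k\ge 2$ is right; and Theorem~\ref{th: layer preim} then gives $|\SC_{132}^{-1}(\pi)|=\binom{n-2}{0}+\binom{n-2}{1}=n-1$, since all later summands $\binom{1}{k}$ vanish. Taking $n=f+1$ realizes every positive integer $f$, and Lemma~\ref{LemComplement} transfers this to $\SC_{312}$. (Equivalently, this is the $r=1$ case of the binomial-coefficient family $|\SC_{132}^{-1}(n(n-1)\cdots(n-r+1)\,12\cdots(n-r))|=\binom{n-1}{r}$ computed in the proof of the final theorem of Section~\ref{Sec:132and312}.) This settles the conjecture for two of the six patterns in $S_3$ and is a genuine, citable observation.

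For the remaining pairs $\{213,231\}$ and $\{123,321\}$, however, you do not give a proof. For $\SC_{231}$ the admissibility of the subsets $J\subseteq\{2,\dots,n-1\}$ is subtle — not every $J$ is admissible for a general $\pi$, and not even $J=\emptyset$ need be admissible (that requires $\rev(\pi)$ to have no premature entries) — so the ``decrement lemma'' you posit is a real theorem that would have to be proved, not a routine bookkeeping step. For $\SC_{321}$ there is no known formula for $|\SC_{321}^{-1}(\pi)|$ nor even for its maximum over $S_n$ (the paper's last table records this gap), so your fallback interpolation strategy rests on the same unproved decrement lemma plus the unestablished fact that fertilities take consecutive values. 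As you yourself write, a genuinely new idea is needed there; as it stands, the proposal closes only the $\{132,312\}$ cases and leaves the other four open, so it is not a proof of the conjecture.
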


Section~\ref{Sec:Dynamicsofs132} contains the first theorems of a dynamical nature concerning maps of the form $s_{\sigma}$. It would be interesting to have other dynamical results for these maps, including analogues of Theorems~\ref{Thm:Classical132Periodic} and \ref{Thm:MaxIterations} for the maps $s_\sigma$ with $\sigma\in\{123,213,231,321\}$. In Theorems~\ref{Thm:Classical132Periodic} and \ref{Thm:MaxIterations}, we proved that every permutation $\pi\in S_n$ satisfies $s_{132}^{n-1}(\pi)\in\Av(132,231)$ and that there exists at least one permutation $\tau\in S_n$ such that $s_{132}^{n-2}(\tau)\not\in\Av(132,231)$. It would be interesting to obtain nontrivial information about these permutations $\tau$. We also have the following conjecture. If $n$ is odd (respectively, even), let ${\bf V}_n$ denote the permutation in $S_n$ obtained by listing the odd (respectively, even) elements of $[n]$ in decreasing order and then listing the even (respectively, odd) elements of $[n]$ in increasing order. For example, ${\bf V}_6=642135$ and ${\bf V}_7=7531246$. 

\begin{conjecture}
If $\tau\in S_n$ is such that $s_{132}^{n-2}(\tau)\not\in\Av(132,231)$, then $s_{132}^{n-1}(\tau)={\bf V}_n$. 
\end{conjecture}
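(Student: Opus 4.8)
The plan is to prove the sharper statement that for every \emph{maximally-slow} permutation $\tau\in S_n$ (meaning $s_{132}^{n-2}(\tau)\notin\Av(132,231)$, equivalently $\sd_{132}(\tau)=n-1$), the permutation $s_{132}^{n-2}(\tau)$ is equal to a single explicit permutation $W_n\in S_n\setminus\Av(132,231)$, and then to finish with the direct computation $s_{132}(W_n)={\bf V}_n$. In the notation of the proof of Theorem~\ref{Thm:MaxIterations}, the candidate is $W_{3m}=(3m-1)(3m)V_{m-1}$ when $n=3m$, with analogous formulas when $n\equiv 1,2\pmod 3$; one checks directly, exactly as in Claims 1 and 2 of that proof, that $W_n\notin\Av(132,231)$ and $s_{132}(W_n)={\bf V}_n$.

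First I would record three easy facts. (a) ${\bf V}_n$ is $V$-shaped (a decreasing run followed by an increasing run), hence lies in $\Av(132,231)$ and is a periodic point by Theorem~\ref{Thm:Classical132Periodic}. (b) Writing $\pi^*$ for the permutation of $S_{n-1}$ obtained by deleting the entry $1$ from $\pi$ and decreasing the remaining entries by $1$, we have ${\bf V}_n^*={\bf V}_{n-1}$, and $\Av(132,231)$ is closed under $\pi\mapsto\pi^*$. (c) By the analysis in the proof of Theorem~\ref{Thm:Classical132Periodic}, the last ascending run of $s_{132}(\pi)$ begins with $1$ for every $\pi$, so the entries $1$ and $2$ are adjacent in $s_{132}^t(\pi)$ for all $t\geq 2$, and consequently $(s_{132}^t(\pi))^*=s_{132}^{t-2}\bigl((s_{132}^2(\pi))^*\bigr)$ for all $t\geq 2$.

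The core of the argument is an induction on $n$ establishing simultaneously that (i) every maximally-slow $\tau\in S_n$ satisfies $s_{132}^{n-2}(\tau)=W_n$ and (ii) $s_{132}(W_n)={\bf V}_n$. For the inductive step, given a maximally-slow $\tau\in S_n$ with $n\geq 4$, set $\rho=(s_{132}^2(\tau))^*\in S_{n-1}$; by fact (c), $(s_{132}^{n-2}(\tau))^*=s_{132}^{n-4}(\rho)$. One must show that $s_{132}^{n-2}(\tau)\notin\Av(132,231)$ forces $s_{132}^{n-4}(\rho)\notin\Av(132,231)$ — i.e. that deleting the entry $1$ cannot speed up convergence for the permutations that actually occur here — so that $\rho$ is maximally slow in $S_{n-1}$ and the induction hypothesis gives $s_{132}^{n-3}(\rho)=W_{n-1}$. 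This determines $s_{132}^{n-2}(\tau)$ up to the location of its entry $1$; combining this with the facts that the last ascending run of $s_{132}^{n-2}(\tau)$ begins with $1$ and that $s_{132}^{n-2}(\tau)\notin\Av(132,231)$ should pin it down as $W_n$, and a single further application of $s_{132}$ — the same kind of bookkeeping as in Claims 1 and 2 of the proof of Theorem~\ref{Thm:MaxIterations} — yields ${\bf V}_n$.

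The main obstacle is the claim that deleting the entry $1$ does not accelerate convergence, i.e. that a maximally-slow $\tau$ really does reduce to a maximally-slow permutation of $S_{n-1}$: in general $\Av(132,231)$ is not reflected by $\pi\mapsto\pi^*$ (for instance $(1324)^*=213$ is $V$-shaped while $1324$ is not), so this step must genuinely use the extra structure available at depth $n-2$, namely adjacency of $1$ and $2$ and the shape of the last ascending run. A secondary nuisance is parity bookkeeping: since $W_{n-1}$ sits one step before a $2$-cycle, one must track whether $(s_{132}^{n-2}(\tau))^*$ equals ${\bf V}_{n-1}$ or its partner $s_{132}({\bf V}_{n-1})$. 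An alternative that sidesteps the parity issues is to prove directly a uniform closed-form description of $s_{132}^{n-2}(\tau)$ for all maximally-slow $\tau$ (a version of Claims 1 and 2 covering every such $\tau$, not just the particular $\lambda_m$) and then compute one more iterate; I expect this route to require first showing that maximally-slow permutations are themselves severely constrained — plausibly each $s_{132}^{j}(\tau)$, for $j$ close to $n$, is forced to be a $V_k$-type permutation with appended extremal entries.
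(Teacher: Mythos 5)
This statement appears in the paper as an open conjecture, so there is no proof in the paper to compare against; your submission is a plan, and it explicitly acknowledges that it is incomplete. Independent of that, there are concrete problems with the plan beyond the ones you flag. The ``main obstacle'' you identify --- that deleting the entry $1$ might speed up convergence --- is actually not an obstacle in the setting where it arises: a permutation is in $\Av(132,231)$ exactly when it is a decreasing run followed by an increasing run, and if $1$ and $2$ are adjacent in $\pi$ then $\pi$ has this shape if and only if $\pi^*$ does (deleting the bottom entry of a V preserves V-shape, and reinserting a new minimum next to the old minimum preserves V-shape). Since $s_{132}^{n-2}(\tau)$ has $1$ and $2$ adjacent for $n\ge 4$, the implication $s_{132}^{n-2}(\tau)\notin\Av(132,231)\Rightarrow s_{132}^{n-4}(\rho)\notin\Av(132,231)$ is automatic.

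The genuine gaps are elsewhere. First, $s_{132}^{n-4}(\rho)\notin\Av(132,231)$ only yields $\sd_{132}(\rho)\ge n-3$, not the $\sd_{132}(\rho)=n-2$ you need for $\rho\in S_{n-1}$ to be ``maximally slow''; nothing in the argument rules out $\rho$ stabilizing one iterate earlier. Second, and more seriously, the indices do not align: fact (c) relates $(s_{132}^{n-2}(\tau))^*$ to $s_{132}^{n-4}(\rho)$, but the inductive hypothesis (even in the strengthened form you propose) describes $s_{132}^{(n-1)-2}(\rho)=s_{132}^{n-3}(\rho)$, one iterate later. Knowing $s_{132}^{n-3}(\rho)=W_{n-1}$ tells you only that $s_{132}^{n-4}(\rho)$ is \emph{some} preimage of $W_{n-1}$ under $s_{132}$, of which there may be many; it does not determine $(s_{132}^{n-2}(\tau))^*$ and hence does not pin down $s_{132}^{n-2}(\tau)$ as $W_n$. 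This mismatch is built into the $*$-reduction (dropping $n$ by $1$ while dropping the iteration count by $2$), so the induction as structured cannot close. The secondary ``parity bookkeeping'' issue you raise is a symptom of this same misalignment. Your alternative route --- proving directly that every maximally-slow permutation is severely constrained and deriving a uniform closed form for $s_{132}^{n-2}(\tau)$ by arguments analogous to Claims~1 and~2 --- seems more promising precisely because it sidesteps the mismatched induction, but as you note, it requires new structural input that the sketch does not supply.
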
 

For example, one can check that if $\tau\in S_6$ is such that $s_{132}^4(\tau)\not\in\Av(132,231)$, then $s_{132}^5(\tau)={\bf V}_6=642135$. 

Of course, it would also be very natural to consider the maps of the form $\SC_{\sigma^{(1)},\ldots,\sigma^{(r)}}$, where $\SC_{\sigma^{(1)},\ldots,\sigma^{(r)}}$ is defined using a stack whose contents must consecutively avoid all of the patterns $\sigma^{(1)},\ldots,\sigma^{(r)}$. It would also be exciting to have results about the maps $\SC_\sigma$ for patterns $\sigma$ of length at least $4$ (such as progress toward Conjecture~\ref{Conj:PeriodicMain}).  

\section{Acknowledgments}
This research was conducted as part of the University of Minnesota Duluth Mathematics REU and was supported by NSF-DMS
Grant 1949884 and NSA Grant H98230-20-1-0009.
The authors thank Joe Gallian for running the REU program and providing encouragement. We also thank Giulio Cerbai for very helpful discussions. The first author was supported by a Fannie and John Hertz Foundation Fellowship and an NSF Graduate Research Fellowship.

\end{document}